\DeclareMathOperator{\Acc}{Acc}
\DeclareMathOperator{\CCa}{CC_A}
\DeclareMathOperator{\CCadia}{CC_A^{\diamond}}
\newcommand{\Z}{\mathbb{Z}}
\newcommand{\N}{\mathbb{N}}
\newcommand{\LM}{\mathcal{L}}
\newcommand{\A}{\mathcal{A}}
\newcommand{\dt}{\delta}
\newcommand{\ep}{\epsilon}
\newcommand{\Scal}{\mathcal{S}}
\newcommand{\Sig}{\Sigma}
\newcommand{\s}{\sigma}
\newcommand{\Mt}{\mathcal{M}}
\newcommand{\nsf}{\mathsf{n}}
\newcommand{\Osf}{\mathsf{O}}
\newcommand{\PSPACE}{\mathsf{PSPACE}}
\newcommand{\coPSPACE}{\mathsf{coPSPACE}}
\newcommand{\NPSPACE}{\mathsf{NPSPACE}}
\newcommand{\PTIME}{\mathsf{PTIME}}
\newcommand{\EXPSPACE}{\mathsf{EXPSPACE}}
\newcommand{\NL}{\mathsf{NL}}
\newcommand{\NP}{\mathsf{NP}}
\newcommand{\coNP}{\mathsf{coNP}}
\newcommand{\SAT}{\mathsf{SAT}}
\newcommand{\PATH}{\mathsf{PATH}}
\newtheorem{assumption}{Assumption}
\newtheorem{remark}{Remark}
\newtheorem{example}{Example}
\newcommand{\subalign}[1]{%
  \vcenter{%
    \Let@ \restore@math@cr \default@tag
    \baselineskip\fontdimen10 \scriptfont\tw@
    \advance\baselineskip\fontdimen12 \scriptfont\tw@
    \lineskip\thr@@\fontdimen8 \scriptfont\thr@@
    \lineskiplimit\lineskip
    \ialign{\hfil$\m@th\scriptstyle##$&$\m@th\scriptstyle{}##$\crcr
      #1\crcr
    }%
  }
}
\definecolor{green}{rgb}{0.1,0.7,0.1}
\tikzset{
node distance=3cm, 
every state/.style={thick, fill=gray!10}, 
initial text=$ $, 
}
\begin{document}

\tikzset{elliptic state/.style={draw,ellipse}}

\title{A unified method to decentralized state inference and fault diagnosis/prediction of discrete-event systems
}


\author{Kuize Zhang 
}



\institute{K. Zhang \at
			  School of Electrical Engineering and Computer Science, KTH Royal Institute of Technology, 10044 Stockholm, Sweden\\
              \email{kuzhan@kth.se}           
}
%

\maketitle

\begin{abstract}
	The state inference problem and fault diagnosis/prediction problem are fundamental topics
	in many areas. In this paper, we consider discrete-event systems (DESs) modeled by
	finite-state automata (FSAs).
	There exist plenty of results on decentralized
	versions of the latter problem but there is almost no result for a decentralized version of
	the former problem. In this paper, we propose a decentralized version of strong detectability
	called {\it co-detectability} which implies that 
	once a system satisfies this property, for each generated infinite-length event sequence,
	at least one local observer can determine the current and subsequent states
	after a common observation time delay.
	We prove that the problem of verifying co-detectability of FSAs is $\coNP$-hard.
	Moreover, we use a unified {\it concurrent-composition} method to give $\PSPACE$ verification
	algorithms for co-detectability, co-diagnosability, and co-predictability of FSAs,
	without any assumption or modifying the FSAs under consideration, where 
	co-diagnosability is firstly studied by [Debouk \& Lafortune \& Teneketzis 2000],
	while co-predictability is firstly studied by [Kumar \& Takai 2010].
	By our proposed unified method,
	one can see that in order to verify co-detectability, more
	technical difficulties will be met compared to verifying the other two properties, because
	in co-detectability, generated outputs are counted, but in the latter two properties, 
	only occurrences of events are counted. For example, when one output was generated,
	any number of unobservable events could have occurred. The $\PSPACE$-hardness of verifying 
	co-diagnosability is already known in the literature. In this paper, we prove the
	$\PSPACE$-hardness of verifying co-predictability.
	
	\keywords{Discrete-event system \and Finite-state automaton \and Co-detectability \and Co-diagnosability \and Co-predictability \and Concurrent composition \and Complexity}
\end{abstract}

\section{Background}

	\subsection{State inference}

The state inference problem of dynamical systems has been a central problem in 
computer science \cite{Moore1956,Model-BasedTesting2005}.
This problem has been a central problem also in control theory,
arranging from linear systems \cite{Kalman1963MathDescriptionofLDS,Wonham1985LinearMultiControl}, 
to nonlinear systems \cite{Sontag1979,Conte2007AlgebraicMethodsNonlinearControlSystems,Isidori1999NonConSys}, 
to switched systems \cite{Tanwani2013ObsSwiLinSys}, and also to networked systems
\cite{Kibangou2016ObserNetSys,Angulo2019StrucObserNonNetSys}.
In two seminal papers \cite{Moore1956,Kalman1963MathDescriptionofLDS},
a property (with its variants)
of
whether one can use an input sequence and the corresponding output sequence to determine
the initial state is investigated. In the former, the property is called {\it Gedanken-experiment}
and Moore machines (deterministic finite-state machines, not necessarily linear) are considered;
while in the latter, it is called {\it observability} and linear differential equations are 
considered. Theoretically, such a property is of its intrinsic interest. When internal states
are only partially observed, it is interesting to 
develop different techniques according to features of different models to infer internal states by using
partial observations.
From a practical point of view, such a property has extensive applications in different areas,
e.g., in traffic networks, it is meaningful to locate a crucial car by using traceable interaction
information with the car when the car itself is not directly traceable;
in genetic regulatory networks, 
it is important to use the states of a subset of directly measurable nodes to estimate or
determine the whole network state because usually not all nodes could be directly measured
\cite{Liu2013ObservabilityComplexSystem}. 
When the initial or past state information
is not crucial but only the current and subsequent state information is needed, the property 
could be reformulated as a weaker notion of {\it detectability} 
\cite{Shu2007Detectability_DES,Fornasini2013ObservabilityReconstructibilityofBCN,Zhang2016WPGRepresentationReconBCN,Sandberg2005HomingSynchronizingSequence,Kari2003SynchronizingSequence,Eppstein1990SIAMJCResetSequence},
which means that whether one could determine the current and subsequent states
by using observed information.

{\it Discrete-event systems} (DESs) consists of discrete states and transitions between states caused
by spontaneous occurrences of events \cite{Cassandras2009DESbook,WonhamSupervisoryControl}, where 
states and events are partially observed. DESs could also be regarded as a suitable model for the
cyber-layer of {\it cyber-physical systems} (CPSs) that are ubiquitous in control engineering,
computer technology, communication engineering, etc. Usually, a CPS consists of a cyber layer and a physical
layer, where the former is a decision process that is usually a discrete system, and the latter usually
comprises several physical processes modeled by differential equations. The two layers are connected by
networks, where the cyber layer should be able to monitor the working status of the physical layer in real time,
and also allocate commands to the physical layer, both through networks. In such a way,
DESs play a central role in governing global behavior of CPSs, and the detectability property
of DESs is of particular importance in governing the global behavior.

\subsection{Fault diagnosis/prediction}

As mentioned before, DESs have two partially observed components, states and events. Hence one may 
be interested in inference problems of either states or events by using observed information.
The inference problem to
the former is formulated as detectability as mentioned before. While the inference of occurrences
of several events could be formulated as {\it diagnosability}
\cite{Lin1994DiagnosabilityDES,Sampath1995DiagnosabilityDES}, where if the property 
holds then once a 
special event (usually regarded to be faulty) occurred, after sufficiently many new events occurred,
one could make sure that a faulty event (although may not be the previous one) had occurred. 
This property means inference of occurrences of faulty events. However, from a dual point of view,
sometimes the occurrence of a faulty event will lead to great economic loss, which also
motivates the study of a dual notion of {\it predictability} \cite{Genc2009PredictabilityDES},
where if this notion holds then 
before a particular faulty event occurs, 
one could make sure that some faulty event will definitely occur. The same as detectability, diagnosability
also has extensive applications, e.g., in railway traffic systems \cite{Boussif2019Diagnosability}.

\subsection{Literature review and an idea of unifying detectability and diagnosability/predictability}

In the literature, these properties are treated by using different methods, and most of
the corresponding 
methods rely on (at least one of) TWO FUNDAMENTAL ASSUMPTIONS that a system is deadlock-free 
(which means that it can
always run), and has no unobservable reachable cycle (which means its running can always be observed).
The two assumptions for FSAs are formulated in Assumption \ref{assum1_Det_PN}.
The notions of {\it strong detectability} and {\it weak detectability} are two fundamental notions of detectability
of DESs. The former implies that there is delay $n$, for each infinite-length output sequence generated by a DES,
each prefix of the output sequence of length greater than $n$ allows reconstructing the current state.
While the latter implies that
there is delay $n$, for some infinite-length output sequence generated by a DES,
each prefix of the output sequence of length greater than $n$ allows doing that.
These two notions are first studied in the seminal paper \cite{Shu2007Detectability_DES}.
Under Assumption \ref{assum1_Det_PN}, a polynomial-time verification algorithm
based on a {\it detector} method for strong detectability
and an exponential-time verification algorithm based on an {\it observer} method for weak detectability
are given in \cite{Shu2011GDetectabilityDES} and \cite{Shu2007Detectability_DES}, respectively.
Later, verifying weak detectability is proved to be PSPACE-hard,
even for deterministic fully-observed FSAs 
\cite{Zhang2017PSPACEHardnessWeakDetectabilityDES,Masopust2018ComplexityDetectabilityDES}.
Initial results on detectability of labeled Petri nets could be found in 
\cite{Zhang2018WODESDetectabilityLPS,Masopust2018DetectabilityPetriNet,Zhang2019DetPNFA},
where in \cite{Zhang2018WODESDetectabilityLPS}, weak detectability is proved to be 
undecidable for labeled Petri nets with inhibitor arcs, and later this undecidable result
is strengthened to hold for labeled Petri nets in \cite{Masopust2018DetectabilityPetriNet};
strong detectability is proved to be decidable in \cite{Masopust2018DetectabilityPetriNet}
for labeled Petri nets under the two fundamental assumptions
for labeled Petri nets,
and later the decidable result is strengthened to hold only based on
the second of the two assumptions in \cite{Zhang2020bookDDS}. Results on essentially different
variants of detectability
notions could be found in \cite{Zhang2019DetPNFA}.
In \cite{Zhang2019KDelayStrDetDES}, a {\it concurrent-composition} method is found to 
verify strong detectability
of FSAs in polynomial time without any assumption, where the 
concurrent-composition structure exactly comes from characterizing {\it negation} of
strong detectability. Note that the terminology ``concurrent composition'' is 
not a new one, but already exists in the literature, representing similar operations to
automata or transition systems compared to those in the current paper.



For diagnosability of FSAs, in the seminal paper \cite{Sampath1995DiagnosabilityDES}, the notion is 
first formulated, and an exponential-time verification algorithm based on a {\it diagnoser} method
is given also under Assumption 
\ref{assum1_Det_PN}. Later on, a {\it twin-plant} method is developed respectively in 
\cite{Jiang2001PolyAlgorithmDiagnosabilityDES,Yoo2002DiagnosabiliyDESPTime} so that 
polynomial-time
verification algorithms are designed, but still under Assumption \ref{assum1_Det_PN}.
After comparing the twin-plant structure with the concurrent-composition structure,
one can see that the former is actually a simplified version of the latter.
In more details, 
in the former, unobservable transitions are not synchronized but only common observable
transitions are synchronized; while in the latter, observable transitions are synchronized as pairs, and
unobservable transitions are also synchronized, so 
more information is contained. Later on, the twin-plant method has been extended so that it
does not rely on Assumption \ref{assum1_Det_PN} any more, e.g., 
in \cite{Moreira2011Codiagnosability}.
On the other hand, there is an alternative verification algorithm for diagnosability given in 
\cite{Viana2019CodiagnosabilityDES}
without any assumption, but the algorithm runs in exponential time, because it is based on the 
observer proposed in \cite{Shu2007Detectability_DES} that is of exponential complexity.
Further related works 
could be found in \cite{Ye2011PhDThesisDiagnosability,Su2004PhDTheseDiagnosisDES,Ye2009PatternDiagnosabilityDistDES,Ye2013JointDiagnosabilityDES,Ibrahim2017DiagnosabilityPlanningConDES,Cassez2008FaultDiagnosisStDyObser},
etc.
The notion of predictability is first proposed in \cite{Genc2009PredictabilityDES}, in which a 
polynomial-time verification algorithm is designed, under \ref{item12_Det_PN}) of Assumption \ref{assum1_Det_PN}.
Differently from detectability, there exist a large number of publications on diagnosability and predictability
with their variants, it is partially because in the seminal paper \cite{Sampath1995DiagnosabilityDES},
it is not defined what diagnosability is for a terminating transition sequence.
We can introduce only a few of them.
We refer the reader to \cite{Hadjicostis2020DESbook} for more related references.
For results on diagnosability of labeled Petri nets, we refer the reader to 
\cite{Cabasino2012DiagnosabilityPetriNet,Berard2018DiagnosabilityPetriNet,Yin2017DiagnosabilityLabeledPetriNets,Haar2012DiagnosabilityTopologyPetriNet}, etc.
In \cite{Cabasino2012DiagnosabilityPetriNet}, a new technique is developed to
verify diagnosability; in \cite{Yin2017DiagnosabilityLabeledPetriNets}, diagnosability is proved
to be decidable with $\EXPSPACE$ lower bound under the first of the two fundamental assumptions;
in \cite{Berard2018DiagnosabilityPetriNet},
a weaker notion of diagnosability
called trace diagnosability is proved to have $\EXPSPACE$ upper bound and lower bound
without any assumption.

Sometimes, limited by the ability of external observers, not all observable events could be observed,
which weakens the possibility of determining states or occurrences of events. In order to deal with such 
a setting, a {\it decentralized version} of the above properties is investigated.
In a decentralized version,
one chooses several observers and put them into different places, and takes into accounts the results returned
by all local observers and makes a final verification whether the system satisfies a property.
In such a way, the original version of a property could be call the {\it centralized version}.
A {\it decentralized version of diagnosability} called {\it co-diagnosability}
of FSAs is firstly studied in
\cite{Debouk2000CodiagnosabilityAutomata} and later revisited in
\cite{Qiu2006DecentralizedFD,Wang2007DecentralizeDiagnosisDES}, etc., by extending
the original twin-plant structure. The results in 
\cite{Qiu2006DecentralizedFD,Wang2007DecentralizeDiagnosisDES} hardly rely on Assumption
\ref{assum1_Det_PN}, while it is clearly shown in \cite{Moreira2011Codiagnosability} that
a simplified version of the extended twin-plant structures used in 
\cite{Qiu2006DecentralizedFD,Wang2007DecentralizeDiagnosisDES}
does not rely on any assumption. Actually, the verification algorithm
shown in \cite{Moreira2011Codiagnosability} runs in exponential time, but not in polynomial
time as is claimed.
The problems of verifying decentralized versions of diagnosability 
for deterministic FSAs are proved to be $\PSPACE$-hard
\cite{Cassez2012ComplexityCodiagnosability}, and a
$\PSPACE$ upper bound is also given based on item \ref{item11_Det_PN}) of Assumption \ref{assum1_Det_PN}
for {\it timed automata} that are substantially more general than FSAs.

The results on decentralized versions of predictability of FSAs
could be found in \cite{Liu2019Predictability,Kumar2010CoPrognosisDES}, where the notion of 
co-predictability studied in \cite{Liu2019Predictability} is exactly the decentralized version of the 
predictability
proposed in \cite{Genc2009PredictabilityDES}, and is equivalent to the notion of uniform bounded
prognosability studied in \cite{Kumar2010CoPrognosisDES}.
The results in 
\cite{Liu2019Predictability} are based on Assumption \ref{assum1_Det_PN};
but in \cite{Kumar2010CoPrognosisDES}, the results work 
after adding at each deadlock state an unobservable self-loop. We will point out that
such a modification does not always preserve (co)-diagnosability or (co)-predictability,
as will shown in Remarks \ref{rem1:codiag} and \ref{rem1:copredic}.
Further results can be found in \cite{Yin2019DecenFaultPrognosis,Ye2013DistributivePredictabilityDES}, etc.
Unlike diagnosability or predictability, there exists almost no result on a decentralized version of 
detectability. 
The only results on decentralized versions of detectability could be found in \cite{Shu2011CoDetDES},
but the notions are not very reasonable,
because they require some local observer to observe all observable events,
so they are actually equivalent to centralized versions of detectability.
In addition, language-based decentralized observability results (called joint observability)
could be found in 
\cite{Tripakis2004DentralizedObservation,Giua2017DecenObservability},
where generally the verification problem is undecidable.
In this paper, we will reformulate a notion of co-detectability that matches a decentralied
setting, and characterize its complexity.

To sum up, there have been plenty of results on decentralized versions of diagnosability
and predictability 
of FSAs, while there has been almost no result on a decentralized version of detectability.
We will formulate a notion of co-detectability,
and extend the concurrent-composition method developed in 
\cite{Zhang2019KDelayStrDetDES,Zhang2019DetPNFA} to
give a method for verifying co-detectability of FSAs, without any assumption or modifying
the FSAs. We will also show that the method also works for co-diagnosability and 
co-predictability after being simplified. Moreover, we will characterize
complexity of the notions of co-detectability, co-diagnosability, and co-predictability.
As potential extensions, these results could be used to study more general distributed 
versions of these notions under weaker assumptions compared to existing results in 
the literature (e.g., in
\cite{Ye2013DistributivePredictabilityDES,Keroglou2018DistributedFaultDiagnosis}).


\subsection{Contributions}

The contributions of the paper are listed as follows (briefly illustrated in Tabs. 
\ref{tab1:ComplexityDecentrFundaPropertyFSA} and \ref{tab2:ComplexityFundaPropertyFSA}
together with related results in the literature).
\begin{enumerate}
	\item We formulate a notion of co-detectability, use the concurrent-composition method to
		give a $\PSPACE$ verification algorithm for the property.
		We also prove that it is $\coNP$-hard to verify the notion
		by reducing the known $\NP$-complete {\it acyclic deterministic finite automata (DFAs)
		intersection problem}
		\cite{Rampersad2010NP-hardAcycDFA} to negation of co-detectability.
	\item For co-diagnosability and co-predictability in the literature, we use the concurrent-composition method to
		give a $\PSPACE$ verification algorithm for both properties. We also prove that it is $\PSPACE$-hard
		to verify co-predictability by reducing the known $\PSPACE$-complete {\it DFAs intersection 
		problem}
		\cite{Kozen1977FiniteAutomatonIntersection} to negation of co-predictability
		(a $\PSPACE$ lower bound for
		co-diagnosability is already given in \cite{Cassez2012ComplexityCodiagnosability}).
\end{enumerate}

\begin{table}
	\centering
	\begin{tabular}{ccc}
		\hline\hline
		co-detectability & co-diagnosability & co-predictability\\
		\hline
		\begin{tabular}{c}
			$\PSPACE$ (Thm. \ref{thm2_codet})\\
			$\coNP$-hard (Thm. \ref{thm3_codet})
		\end{tabular}
		&
		\begin{tabular}{c}
			$\PSPACE$ (Thm. \ref{thm2_codiag})\\
			$\PSPACE$-hard (\cite{Cassez2012ComplexityCodiagnosability})
		\end{tabular}
		&
		\begin{tabular}{c}
			$\PSPACE$ (Thm. \ref{thm2_copred})\\
			$\PSPACE$-hard (Thm. \ref{thm3_copred})
		\end{tabular}\\
		\hline
	\end{tabular}
	\caption{Complexity results for decentralized versions of detectability,
	diagnosability, and predictability of finite-state automata.}
	\label{tab1:ComplexityDecentrFundaPropertyFSA}
\end{table}

\begin{table}
	\centering
	\begin{tabular}{ccc}
		\hline\hline
		strong detectability & diagnosability & predictability\\
		\hline
		\begin{tabular}{c}
			$\NL$ (Thm. \ref{thm4_codet})\\
			$\NL$-hard (\cite{Masopust2018ComplexityDetectabilityDES})
		\end{tabular}
		&
		\begin{tabular}{c}
			$\NL$-complete (\cite{Berard2018DiagnosabilityPetriNet})
		\end{tabular}
		&
		\begin{tabular}{c}
			$\NL$-complete (Thm. \ref{thm4_copred})
		\end{tabular}\\
		\hline
	\end{tabular}
	\caption{Complexity results for strong detectability,
	diagnosability, and predictability of finite-state automata.}
	\label{tab2:ComplexityFundaPropertyFSA}
\end{table}

In the subsequent main results, one will see that in order to characterize co-detectability,
more technical difficulties will be met than to deal with co-diagnosability and co-predictability,
because in co-detectability, generated outputs are counted, but in co-diagnosability and co-predictability,
only generated events are counted. When one output was counted, any number of unobservable events
could have occurred.

The rest of the paper is structured as follows. In Section \ref{sec2:pre}, we introduce 
preliminaries on FSAs and languages, and the notion of concurrent composition of 
FSAs. In Section \ref{sec3:mainresult}, we show main results. Finally in Section \ref{sec4:conc},
we show a short conclusion.

\section{Preliminaries}\label{sec2:pre}

Next we introduce necessary notions that will be used throughout this paper.
Symbols $\N$ and $\Z_{+}$ denote the sets of natural numbers and positive integers, respectively.
For a set $\Sig$, $\Sig^*$ and $\Sig^{\omega}$ are used to denote the sets of finite sequences
(called {\it words}) of elements of $\Sig$ including the empty word $\epsilon$
and infinite sequences (called {\it configurations}) of elements of $\Sig$,
respectively. As usual, we denote $\Sig^{+}=\Sig^*\setminus\{\epsilon\}$.
For a word $s\in \Sig^*$,
$|s|$ stands for its length, and we set $|s'|=+\infty$ for all $s'\in \Sig^{\omega}$.
For $s\in \Sig$ and natural number $k$,
$s^k$ and $s^{\omega}$ denote the $k$-length word and configuration
consisting of copies of $s$'s, respectively.
For a word (configuration) $s\in \Sig^*(\Sig^{\omega})$, a word $s'\in \Sig^*$ is called a {\it prefix} of $s$,
denoted as $s'\sqsubset s$,
if there exists another word (configuration) $s''\in \Sig^*(\Sig^{\omega})$ such that $s=s's''$.
For two natural numbers $i\le j$, $[i,j]$ denotes the set of all integers between $i$ and $j$ including $i$ and $j$;
and for a set $S$, $|S|$ its cardinality and $2^S$ its power set.

\subsection{Finite automata}

A {\it nondeterministic finite automaton} (NFA) is a $5$-tuple 
\begin{equation}\label{NFA}
	\A=(Q,\Sig,\dt,q_0,F),
\end{equation}
where $Q$ is a finite set of {\it states}, $\Sig$ is a finite {\it alphabet},
elements of $\Sig$ are called {\it letters} \cite{Kari2013LectureNoteonAFL} (also called 
{\it events} 
(cf. \cite{Ramadge1987SupervisoryControlofDES,Shu2007Detectability_DES}, etc.)),
$q_0\in Q$ is the {\it initial state}, $F\subset Q$ is the set of {\it accepting states}
(also called {\it final states}
\cite{Kari2013LectureNoteonAFL} or {\it marker states} \cite{Ramadge1987SupervisoryControlofDES}),
$\dt\subset Q\times \Sig\times Q$ is a {\it transition relation}.
A word $\s_1\dots\s_n\in\Sig^*\setminus\{\epsilon\}$ is called {\it accepted by} $\A$
if there exist states $q_1,\dots,q_n\in Q$ such that $q_n\in F$ and $(q_{i},\s_{i+1},q_{i+1})\in\dt$
for all $0\le i\le n-1$. Particularly $\epsilon$ is {\it accepted by} $\A$ if and only if $q_0\in F$.
The set of words accepted by $\A$ is called the {\it language recognized by} $\A$.
Automaton $\A$ is called {\it acyclic} if there is no cycle in $\A$;
called {\it complete} if for each state $q\in Q$ and each letter $\s\in\Sig$, there is a transition
$(q,\s,q')\in\dt$ for some $q'\in Q$;
and called a {\it deterministic finite automaton} (DFA) if for all $q_1,q_2',q_2''\in Q$
and $\s\in\Sig$, $(q_1,\s,q_2')\in\dt$ and $(q_1,\s,q_2'')\in\dt$ imply $q_2'=q_2''$.

\subsection{Finite-state automata}

A DES can be modeled by an FSA, which can be obtained from
an NFA \eqref{NFA} by removing all accepting states,
replacing a unique initial state
by a set $X_0$ of initial states, and adding a labeling function $\ell$.

Formally, an FSA is a sextuple
\begin{equation}\label{DES_FSA}
	\Scal=(X,T,X_0,\dt,\Sig,\ell),
\end{equation}
where $X$ is a finite set of {\it states}, $T$ a finite set of {\it events},
$X_0\subset X$ a set of {\it initial states},
$\dt\subset X\times T\times X$ a {\it transition relation}, $\Sig$ a finite set of {\it outputs (labels)},
and $\ell:T\to\Sig\cup\{\epsilon\}$
a {\it labeling function}, where $\epsilon$ denotes the empty word.
The event set $T$ can been rewritten as disjoint union
of {\it observable} event set $T_{o}$ and {\it unobservable} event set $T_{\ep}$,
where events of $T_o$ are with label in $\Sig$, but events of $T_{\ep}$ are labeled by $\ep$.
When an observable event occurs, its label can be observed, but when an unobservable event occurs, 
nothing can be observed.
For an observable event $t\in T$, we say $t$ {\it can be directly observed} if $\ell(t)$
differs from $\ell(t')$ for any other $t'\in T$. Transition relation $\dt\subset X\times T
\times X$ can be recursively extended to $\dt\subset X\times T^* \times X$ as follows:
(1) for all $x,x'\in X$, $(x,\ep,x')\in\dt$ if and only if $x=x'$; (2) for all $x,x'\in X$,
$s\in T^*$, and $t\in T$, one has $(x,st,x')\in\dt$, also denoted by $x\xrightarrow[]{st}
x'$, called a {\it transition sequence},
if and only if $(x,s,x''),(x'',t,x')\in\dt$ for some $x''\in X$.
Labeling function $\ell:T\to\Sig\cup\{\ep\}$ 
can be recursively extended to $\ell:T^*\cup T^{\omega}\to\Sig^*\cup\Sig^{\omega}$ as
$\ell(t_1t_2\dots)=\ell(t_1)\ell(t_2)\dots$ and $\ell(\ep)=\ep$.
Transitions $x\xrightarrow[]{t}x'$ with $\ell(t)=\ep$ are called 
{\it unobservable transitions}, and other transitions are called
{\it observable transitions}. The event set $T$ can also been rewritten as disjoint union
of {\it controllable} event set $T_{c}$ and {\it uncontrollable} event set $T_{uc}$,
where controllable events are such that one can disable their occurrences, and uncontrollable
events are such that one cannot do that. Analogously, transitions $x\xrightarrow[]{t}x'$
with $t$ being controllable are called {\it controllable}, and other transitions are called
{\it uncontrollable}. For $x\in X$ and $s\in T^+$, $(x,s,x)$ is called a {\it transition 
cycle} if $(x,s,x)\in\dt$. An {\it observable transition cycle} is defined 
by a transition cycle with at least one observable transition. Analogously an {\it unobservable
transition cycle} is defined by a transition cycle with no observable transition.
Automaton $\Scal$ is called {\it deterministic}
if for all $x,x',x''\in X$ and $t\in T$, $(x,t,x'),(x,t,x'')\in\dt$ imply $x'=x''$.

A state $x\in X$ is called {\it deadlock} if 
$(x,t,x')\notin \dt$ for any $t\in T$ and $x'\in X$.
$\Scal$ is called {\it deadlock-free} if it has no deadlock state.
We say {\it a state $x'\in X$ is reachable from a state}
$x\in X$ if there exists $s\in T^+$ such that $x\xrightarrow[]{s}x'$.
We say {\it a subset $X'$ of $X$ is reachable from a state} $x\in X$ if some state of $X'$ is 
reachable from $x$. Similarly {\it a state $x\in X$ is reachable from a subset $X'$ of $X$}
if $x$ is reachable from some state of $X'$.
We call a state $x\in X$
{\it reachable} if either $x\in X_0$ or it is reachable from some initial state.
We denote by $\Acc(\Scal)$ the {\it accessible part} of $\Scal$ that is obtained by 
removing all unreachable states of $\Scal$.

We use $L(\Scal)=\{s\in T^*|(\exists x_0\in X_0)(\exists x\in X)[x_0\xrightarrow[]
{s}x]\}$ to denote the set of finite-length event sequences 
generated by $\Scal$,
we also use $L^{\omega}(\Scal)=\{t_1t_2\dots$$\in T^{\omega}|(\exists x_0\in X_0)
(\exists x_1,x_2,\dots$$\in X)[x_0\xrightarrow[]{t_1}x_1\xrightarrow[]{t_2}\cdots]\}$ to 
denote the set of infinite-length event sequences generated by $\Scal$.
For each $\s\in\Sig^*$, we denote by $\Mt(\Scal,\s)$ the {\it current state estimate},
i.e., set of
states that the system can be in after $\s$ has been observed, i.e., 
$\Mt({\cal S},\s):=
\{x\in X|(\exists x_0\in X_0)(\exists s\in T^*)[
(\ell(s)=\s)\wedge(x_0\xrightarrow[]{s}x)]\}$.
$\LM({\cal S})$ denotes the {\it language generated} by system $\cal S$,
i.e., $\LM({\cal S}):=\{\s\in\Sig^*|\Mt({\cal S},\s)\ne\emptyset\}$.
We use $\LM^{\omega}({\cal S})$ to denote the $\omega$-{\it language generated} by $\cal S$,
i.e., $\LM^{\omega}(\Scal):=\{\s\in\Sigma^{\omega}|(\exists s\in L^{\omega}(\Scal)|
[\ell(s)$$=\s]\}$.


The following two assumptions are commonly used in detectability studies 
(cf. \cite{Shu2007Detectability_DES,Shu2011GDetectabilityDES,Shu2013DelayedDetectabilityDES})
and diagnosability studies \cite{Sampath1995DiagnosabilityDES,Jiang2001PolyAlgorithmDiagnosabilityDES,Yoo2002DiagnosabiliyDESPTime},
but are not needed in the current paper.

\begin{assumption}\label{assum1_Det_PN}
	An FSA $\Scal=(X,T,X_0,\dt,\Sig,\ell)$ satisfies
	\begin{enumerate}[(i)]
		\item\label{item11_Det_PN}
			$\Scal$ is deadlock-free, 
		\item\label{item12_Det_PN} 	
			$\Scal$ is prompt (or divergence-free), 
			i.e., for every reachable state $x\in X$ and every
			nonempty unobservable event sequence $s\in (T_\ep)^+$, $(x,s,x)\notin\dt$.
	\end{enumerate}
\end{assumption}


%

\subsection{Concurrent composition}

The concurrent-composition structure was found in \cite{Zhang2019DetPNFA} when {\it negation}
of stronger versions of detectability was characterized. It provides a polynomial-time verification
method to strong detectability \cite{Zhang2019KDelayStrDetDES,Zhang2020bookDDS} without any assumption,
which strengths 
the detector method for verifying strong detectability proposed in \cite{Shu2011GDetectabilityDES}
under Assumption \ref{assum1_Det_PN}. In this paper, we extend the concurrent composition structure
from two automata to a finite number of automata, and show that it could provide a unified approach
to verifying decentralized versions of strong detectability, diagnosability, and predictability,
without any assumption or changing the automata under consideration.

Given $L\in\Z_{+}$ and a number $L+1$ of FSAs $\Scal_i=(X^i,T^i,X_0^i,\dt^i,\Sig^i,\ell^i)$, $i\in[0,L]$,
we construct the {\it concurrent composition} 
\begin{equation}\label{ConComp_automata}
	\CCa(\Scal_0;\Scal_1,\dots,\Scal_L):=(X',T',X_0',\dt')
\end{equation} {\it of $\Scal_1,\dots,\Scal_L$ with respect to} $\Scal_0$ as follows:
\begin{enumerate}
	\item $X'=X^0\times X^1\times\cdots\times X^L$;
	\item $T'\subset(T^0\cup\{\ep\})\times(T^1\cup\{\ep\})\times\cdots\times(T^L\cup\{\ep\})$,
		where for all $(\breve{t}^0,\breve{t}^1,\dots,\breve{t}^L)\in
		(T^0\cup\{\ep\})\times(T^1\cup\{\ep\})\times\cdots\times(T^L\cup\{\ep\})$,
		$(\breve{t}^0,\breve{t}^1,\dots,\breve{t}^L)\in T'$ if and only if one of the followings
		holds: 
				\begin{enumerate}
			\item\label{item1_codet}
				$\ell^{j}(\breve{t}^0)=\ep$ for all $j\in[1,L]$ such that $\breve{t}^0\in
				T^j\cup\{\ep\}$ (which includes the case $\breve{t}^0\notin\bigcup_{i=1}
				^{L}T^i$): 
				
				$\ell^{k}(\breve{t}^{k})=\ep$ for all $k\in[1,L]$, and
				$\breve{t}^{l}\ne\ep$ for only one $l\in[0,L]$,
			\item\label{item2_codet} 
				$\ell^{j}(\breve{t}^0)\ne\ep$ for some $j\in[1,L]$ such that $\breve{t}^0
				\in T^j$:
				
				for all $l\in[1,L]$, $\breve{t}
				^{l}=\ep$ if $\ell^{l}(\breve{t}^0)=\ep$ or $\breve{t}^0\notin T^l$,
				$\ell^{l}(\breve{t}^{l})=
				\ell^{l}(\breve{t}^0)$ otherwise;
		\end{enumerate} 
	\item $X_0'=X^0_0\times X^1_0\times\cdots\times X^L_0$;
	\item for all $(\breve{x}^0,\breve{x}^1,\dots,\breve{x}^L),(\breve{y}^0,\breve{y}^1,\dots,\breve{y}^L)\in X'$,
		and $(\breve{t}^0,\breve{t}^1,\dots,\breve{t}^L)\in T'$, one has
		$((\breve{x}^0,\breve{x}^1,\dots,\breve{x}^L),(\breve{t}^0,\breve{t}^1,\dots,\breve{t}^L),
		(\breve{y}^0,\breve{y}^1,\dots,\breve{y}^L))\in\dt'$ if and only if
		\begin{itemize}
			\item for all $i\in[0,L]$, $\breve{x}^i=\breve{y}^i$ if $\breve{t}^i=\ep$, 
				$(\breve{x}^i,\breve{t}^i,\breve{y}^i)\in\dt^i$ otherwise.
		\end{itemize}
\end{enumerate}

For all $\breve{t}^0\in T^0$ and $i\in[1,L]$ such that $\breve{t}^0\notin T^i$, 
we denote $\ell^i(\breve{t}^0)=\ep$.
For an event sequence $s'\in (T')^{*}$, we use $s'(i)$ to denote its $i$-th 
component, $i\in[0,L]$. This notation is applied to states of $X'$ and transition sequences 
of \eqref{ConComp_automata} as well. 
Then for all $i\in[1,L]$, one has $\ell^i(s'(0))=\ell^i(s'(i))=:\ell^i(s')$.
Then one observes $\CCa(\Scal_0;\Scal_1,\dots,\Scal_L)$ aggregates
all transition sequences of $\Scal_0,\dots,\Scal_L$ starting from initial states
such that the transition sequences in $\Scal_i$ 
and $\Scal_0$ produce the same label sequence under labeling function $\ell^i$, $i\in[1,L]$.
Hence we call this notion the concurrent composition of $\Scal_1,\dots,\Scal_L$ with respect to 
$\Scal_0$. When $L=1$ and $\Scal_0=\Scal_1$, \eqref{ConComp_automata} reduces to the concurrent
composition used in \cite{Zhang2019DetPNFA,Zhang2020bookDDS}. The above observation could be
formulated as the following proposition, we omit its straightforward proof.

\begin{proposition}\label{prop1_co_DES}
	Consider FSAs $\Scal_i=(X^i,T^i,X_0^i,\dt^i,\Sig^i,\ell^i)$, $i\in[0,L]$,
	and transition sequences $x_0^i\xrightarrow[]{s^i}x^i$ of $\Scal_i$ with $x_0^i$
	being initial and $s^i\in (T^i)^*$. If for all $i\in[1,L]$,
	$\ell^i(s^0)=\ell^{i}(s^i)$, then in concurrent composition $\CCa(\Scal_0;\Scal_1,\dots,\Scal_L)$,
	there is a transition sequence $x_0'\xrightarrow[]{s'}x'$ the $j$-th component of which equals 
	the above $x_0^j\xrightarrow[]{s^j}x^j$ for all $j\in[0,L]$.
\end{proposition}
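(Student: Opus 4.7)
The plan is to induct on $n := |s^0| + \sum_{i=1}^L |s^i|$, building the required transition sequence in $\CCa(\Scal_0;\Scal_1,\dots,\Scal_L)$ one compound step at a time by greedy simulation. The base case $n=0$ is trivial: all $s^i$ are empty and the empty transition sequence at $(x_0^0,x_0^1,\dots,x_0^L)$ works. For the inductive step I would maintain the invariant that the yet-unconsumed suffixes of $s^0,s^1,\dots,s^L$ still satisfy $\ell^i(\text{suffix of }s^0)=\ell^i(\text{suffix of }s^i)$ for every $i\in[1,L]$, and dispatch into three priority-ordered cases.

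First, if some $s^i$ with $i\in[1,L]$ is nonempty and its leading event $\breve{t}^i$ satisfies $\ell^i(\breve{t}^i)=\ep$, I fire the tuple with $\breve{t}^i$ in slot $i$ and $\ep$ everywhere else; this is legal by item \ref{item1_codet} of the transition rule of \eqref{ConComp_automata}, and the invariant is preserved because stripping an $\ep$-labeled letter from the front of $s^i$ leaves $\ell^i(s^i)$ unchanged. Second, if the first case fails and $s^0$ is nonempty with leading event $\breve{t}^0$ such that $\ell^j(\breve{t}^0)=\ep$ for every $j\in[1,L]$ with $\breve{t}^0\in T^j$, I fire $(\breve{t}^0,\ep,\dots,\ep)$, again by item \ref{item1_codet}; the invariant survives symmetrically. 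Third, if neither of the above applies, then $s^0$ is nonempty with leading event $\breve{t}^0$ and $\ell^j(\breve{t}^0)\ne\ep$ for some $j\in[1,L]$. For each such index $l$ (i.e.\ with $\ell^l(\breve{t}^0)\ne\ep$) the invariant forces $s^l$ to be nonempty, since its $\ell^l$-image starts with the nonempty letter $\ell^l(\breve{t}^0)$; failure of Case 1 then forces the leading event $\breve{t}^l$ of $s^l$ to satisfy $\ell^l(\breve{t}^l)\ne\ep$, and comparing first letters of the two matching label sequences yields $\ell^l(\breve{t}^l)=\ell^l(\breve{t}^0)$. Hence item \ref{item2_codet} permits firing the tuple $(\breve{t}^0,\breve{t}^1,\dots,\breve{t}^L)$ with $\breve{t}^l=\ep$ whenever $\ell^l(\breve{t}^0)=\ep$ or $\breve{t}^0\notin T^l$, and $\breve{t}^l$ the leading event of $s^l$ otherwise.

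The main verification is that some case always applies when $n>0$: if Case 1 fails and $s^0=\ep$, every nonempty $s^i$ ($i\ge 1$) would have a leading event with nonempty $\ell^i$-label, giving $\ell^i(s^i)\ne\ep=\ell^i(s^0)$ and contradicting the invariant; hence every $s^i$ is empty and $n=0$, a contradiction. Concatenating the tuples fired across all inductive steps produces a transition sequence in $\CCa(\Scal_0;\Scal_1,\dots,\Scal_L)$ whose $j$-th component, by the definitions of the transitions in items \ref{item1_codet} and \ref{item2_codet}, is exactly $x_0^j\xrightarrow{s^j}x^j$. The only genuine obstacle is the bookkeeping in Case 3, where several components synchronize on $\breve{t}^0$ while others stay idle, but the rule-(b) template dictates exactly which slots advance and the first-letter matching argument above supplies the required consistency.
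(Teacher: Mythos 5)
Your argument is correct: the greedy interleaving with the stated priority of the three cases legitimately invokes the two transition rules in the definition of $\CCa(\Scal_0;\Scal_1,\dots,\Scal_L)$ at every step, preserves the label-matching invariant on the unconsumed suffixes (using the convention $\ell^i(\breve{t}^0)=\ep$ when $\breve{t}^0\notin T^i$), makes progress whenever some suffix is nonempty, and the concatenated steps have exactly the required $j$-th components. The paper omits this proof as straightforward, and your induction is precisely the construction implicit in its preceding discussion, so there is nothing further to reconcile.
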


Concurrent composition $\CCa(\Scal_0;\Scal_1,\dots,\Scal_L)$ has at most
$|X^0|\times|X^1|\times\cdots\times|X^L|$ states,
the number of its transitions shown in \ref{item1_codet}) is bounded by $\sum_{i=0}^{L}
|T_{\ep}^i|$, where $T_{\ep}^i$ is the unobservable event set of automaton $\Scal_i$,
the number of its transitions shown in \ref{item2_codet}) is bounded by
$\prod_{i=0}^{L}|T_o^i|$, where $T_{o}^i$ is the observable event set of automaton $\Scal_i$,
$i\in[0,L]$. 
Hence $\CCa(\Scal_0;\Scal_1,\dots,\Scal_L)$ has at most 
$|X^0|^2\times|X^1|^2\times\cdots\times|X^L|^2\times(\prod_{i=0}^{L}|T_o^i|+\sum_{i=0}^{L}
|T_{\ep}^i|)$ transitions.

\section{Main results}\label{sec3:mainresult}

Consider an FSA $\Scal$ \eqref{DES_FSA}.
In order to formulate co-detectability, we first choose a number $L\in\Z_{+}$ of locations,
where in each 
location there is a {\it local observer} $\Osf_i$ who can observe a subset $T_i\subset T_o$ of
observable events via {\it local labeling function} $O_i:T_i\to\Sig\cup\{\ep\}$:
if an observable event $t\in T_i$ occurs,
then $\Osf_i$ observes $\ell(t)$, which is denoted by $O_i(t)=\ell(t)$; 
however, if an event $t\in T\setminus T_i$ occurs, 
then $\Osf_i$ observes nothing, which is denoted by $O_i(t)=\ep$.
In this sense, we also call labeling function $\ell$ the {\it global observer}.
$O_i$ is also recursively extended to $O_i:T^*\cup T^{\omega}\to\Sig^*\cup\Sig^{\omega}$.
Corresponding to observer $\Osf_i$, we say an event $t\in T_i$ is {\it $\Osf_i$-observable},
and $t\in T\setminus T_i$ is
{\it $\Osf_i$-unobservable}.
We write the set of local observers by $\Osf=\{\Osf_i|i\in[1,L]\}$.
For each $\s\in\Sig^*$, we denote the {\it current state estimate via $O_i$}
by $\Mt_{O_i}({\Scal},\s):=\{x\in X|(\exists x_0\in X_0)(\exists s\in T^*)[
(O_i(s)=\s)\wedge ((x_0,s,x)\in\dt)]\}$. Similarly, we denote
$\Mt_{O_i}(X',\s):=\{x\in X|(\exists x'\in X')(\exists s\in T^*)[
(O_i(s)=\s)\wedge ((x',s,x)\in\dt)]\}$ for all $X'\subset X$ and $\s\in\Sig^*$.
We also call automaton
\begin{equation}\label{local_DES_FSA}
	\Scal_i=(X,T,X_0,\dt,\Sig,O_i)
\end{equation}
{\it local automaton in location $i$} of automaton $\Scal$, $i\in[1,L]$.

\subsection{Co-detectability}

\subsubsection{Formulation}

We extend the notion of strong detectability studied in \cite{Shu2007Detectability_DES,Shu2011GDetectabilityDES}
(shown in Definition \ref{def_strdet}) to a
decentralized version (shown in Definition \ref{def_codet}).  
Definition \ref{def_strdet} implies that there is a time delay $k$, for each infinite-length 
event sequence generated by $\Scal$, every prefix of the label sequence generated by 
the event sequence of length no less than $k$ allows reconstructing the current state. 
More generally, Definition \ref{def_codet} implies that there is a time delay $k$,
for each infinite-length 
event sequence generated by $\Scal$, {\it in some location $i$},
every prefix of the label sequence generated by 
the event sequence {\it via $O_i$} of length no less than $k$ allows reconstructing the current state. 

\begin{definition}[StrDet]\label{def_strdet}
	An FSA $\Scal$ \eqref{DES_FSA} is called {\it strongly detectable} if
	\begin{align*}
		&(\exists k\in\N)(\forall s\in L^{\omega}({\Scal}))(\forall \s\sqsubset \ell(s))\\
		&[(|\s|\ge k)\implies(|\Mt({\Scal},\s)|=1)].
	\end{align*}
\end{definition}

\begin{definition}[CoDet]\label{def_codet}
	An FSA $\Scal$ \eqref{DES_FSA} is called {\it $\Osf$-co-detectable} if
	\begin{align*}
		&(\exists k\in\N)(\forall s\in L^{\omega}({\Scal}))(\exists i\in[1,L])(\forall \s\sqsubset O_i(s))\\
		&[(|\s|\ge k)\implies(|\Mt_{O_i}({\Scal},\s)|=1)].
	\end{align*}
\end{definition}

Definition \ref{def_codet}
means that an FSA $\Scal$ is $\Osf$-co-detectable if for each infinite-length event sequence
generated by $\Scal$, at least one local observer can determine the current and subsequent states
after a common observation time delay $k$ that does not depend on infinite-length event sequences.
When $L=1$ and $O_1=\ell$, Definition \ref{def_codet} reduces to Definition \ref{def_strdet}.
Note that when we consider $\Osf$-co-detectability of $\Scal$, 
the labeling function $\ell$ works if and only if $T_o=T_i$ for some $i\in[1,L]$, i.e., one local observer
can observe all observable events.
In this case, $\Osf$-co-detectability is equivalent to strong detectability.
Otherwise, one has $T_i\subsetneq T_o$ for all $i\in[1,L]$.

The notion of strong detectability studied in \cite{Zhang2019KDelayStrDetDES,Zhang2020bookDDS}
is strictly weaker than Definition \ref{def_strdet}. The former is defined as follows:
$(\exists k\in\N)(\forall \s\in\LM^{\omega}({\Scal}))(\forall \s'\sqsubset\s)[(|\s'|\ge k)\implies(|\Mt
({\Scal},\s')|=1)]$. The former can describe more systems than the latter, and they are quite close to
each other, so the former works better in a centralized setting.
However, since the former directly relies on observation sequences, it is not easy to
extend it to a decentralized setting, since in different locations, the local labeling functions
may differ.

The notion of strong co-detectability formulated in \cite{Shu2011CoDetDES} is as follows:
\begin{align*}
	&(\exists k\in\N)(\forall s\in L^{\omega}({\Scal}))(\forall \s\sqsubset\ell(s))\\
	&[(|\s|\ge k)\implies( (\exists i\in[1,L])[|\Mt_{O_i}({\Scal},O_i(s))|=1])].
\end{align*}
This notion is actually not well defined, because $\Mt_{O_i}({\Scal},O_i(s))$ may not be well 
defined, as $O_i(s)$ may be of infinite length. Even if after changing $O_i(s)$ to a prefix 
of itself, which makes this notion well defined, this notion is not very reasonable, because
the usage of $\ell(s)$ in this notion requires some local observer to observe all observable 
events, otherwise one may have $O_i(s)=\ep$. 
The notion actually implies 
strong detectability that is in a centralized setting.
We are not interested in extending the notion of weak detectability studied in 
\cite{Shu2007Detectability_DES,Shu2011GDetectabilityDES,Zhang2019KDelayStrDetDES,Zhang2020bookDDS}
to a decentralized version, since it is too weak so that it is very difficult to use this notion to 
determine the current and subsequent states.

The problem considered in this subsection is formulated as follows.

\begin{problem}[FSA CO-DETECTABILITY]\label{prob_codet}
	
	INSTANCE: An FSA $\Scal$ \eqref{DES_FSA} and a set $\Osf=\{\Osf_i|i\in[1,L]\}$ of local observers.

	QUESTION: Is $\Scal$ $\Osf$-co-detectable?
\end{problem}

The size of the input of Problem \ref{prob_codet} is $|\Scal|+\sum_{i=1}^{L}|T_i|\le
|\Scal|+L|T|$, where
$|\Scal|$ is the size of $\Scal$, i.e., the number of states and the number of transitions,
plus the labeling function, each $T_i$ is the set of $\Osf_i$-observable events.

\subsubsection{Equivalent condition}

In the sequel, in order to develop an equivalent condition for $\Osf$-co-detectability 
without any assumption or changing the FSAs, we consider its negation. To implement 
this idea, we first show negation of $\Osf$-co-detectability as follows.

\begin{proposition}\label{prop1_codet}
	An FSA $\Scal$ \eqref{DES_FSA} is not $\Osf$-co-detectable if and only if
	\begin{align*}
		&(\forall k\in\N)(\exists s_k\in L^{\omega}({\Scal}))(\forall i\in[1,L])
		(\exists \s_i\sqsubset O_i(s_k))\\
		&[(|\s_i|\ge k)\wedge(|\Mt_{O_i}({\Scal},\s_i)|>1)].
	\end{align*}
\end{proposition}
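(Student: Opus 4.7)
The plan is to obtain the stated characterization as a direct logical negation of Definition~\ref{def_codet}, with one small semantic observation needed to turn ``$\neq 1$'' into ``$>1$''. Since $\Osf$-co-detectability has the quantifier shape $(\exists k)(\forall s)(\exists i)(\forall \s)[P \Rightarrow Q]$, where $P$ denotes $|\s|\ge k$ and $Q$ denotes $|\Mt_{O_i}(\Scal,\s)|=1$, I would simply push the outer negation inward using de~Morgan rules: flipping $\exists k$ to $\forall k$, $\forall s$ to $\exists s$, $\exists i$ to $\forall i$, and $\forall \s$ to $\exists \s$, and rewriting $\neg(P \Rightarrow Q)$ as $P \wedge \neg Q$. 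This yields exactly
\begin{align*}
  &(\forall k\in\N)(\exists s_k\in L^\omega(\Scal))(\forall i\in[1,L])(\exists \s_i\sqsubset O_i(s_k))\\
  &[(|\s_i|\ge k)\wedge(|\Mt_{O_i}(\Scal,\s_i)|\ne 1)].
\end{align*}

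The only remaining step is to justify replacing $|\Mt_{O_i}(\Scal,\s_i)|\ne 1$ with $|\Mt_{O_i}(\Scal,\s_i)|>1$. For this, I would observe that because $\s_i\sqsubset O_i(s_k)$ with $s_k\in L^\omega(\Scal)$, there is a finite prefix $s'$ of $s_k$ with $O_i(s')=\s_i$, and by definition of $L^\omega(\Scal)$ this prefix is realized by a transition sequence $x_0\xrightarrow{s'}x$ for some $x_0\in X_0$ and some $x\in X$. Hence $x\in \Mt_{O_i}(\Scal,\s_i)$, so $\Mt_{O_i}(\Scal,\s_i)$ is nonempty and the condition $|\Mt_{O_i}(\Scal,\s_i)|\ne 1$ is equivalent to $|\Mt_{O_i}(\Scal,\s_i)|>1$. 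Combining this with the previous paragraph gives the claim.

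The whole argument is purely propositional/first-order manipulation, so there is no real ``hard part''; the only thing to be slightly careful about is the implicit nonemptiness of the current-state estimate along an observed prefix, and that follows immediately from the fact that $\s_i$ is taken to be a prefix of an actual $O_i$-observation of a system behavior rather than an arbitrary string over $\Sig$. Since the statement is really just the contrapositive phrasing of Definition~\ref{def_codet}, I would keep the write-up to a few lines.
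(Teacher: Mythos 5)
Your proposal is correct and matches the paper's treatment: the paper states Proposition \ref{prop1_codet} as the direct logical negation of Definition \ref{def_codet} without further argument, which is exactly your quantifier-pushing step. Your additional observation that $\Mt_{O_i}(\Scal,\s_i)\neq\emptyset$ (because $\s_i$ is a prefix of an actual $O_i$-observation of a generated infinite run) correctly justifies writing $>1$ in place of $\neq 1$, so nothing is missing.
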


In order to derive an equivalent condition for negation of $\Osf$-co-detectability,
we need an extra structure. Given a concurrent composition $\CCa(\Scal;\Scal_1,\dots,\Scal_L)$ 
\eqref{ConComp_automata},
we compute a variant
\begin{equation}\label{ConCompVariant_automata}
\CCadia(\Scal;\Scal_1,\dots,\Scal_L)
\end{equation}
from \eqref{ConComp_automata} as follows:
(\romannumeral1) Add new states to \eqref{ConComp_automata} to make its state set become
$X\times(X\cup\{\diamond\})^{L}$,
where $\diamond$ is a fresh state not in $X$. (\romannumeral2)
For each state $(x_0,x_1,\dots,x_L)$ in $X\times(X\cup\{\diamond\})^{L}$ satisfying $x_0\ne x_i$
for some $i\in[1,L]$,
at $x_0$ we choose an arbitrary transition $(x_0,t_0,x_0')\in\dt$ with $t_0\in T\cup\{\ep\}$;
at each $x_j$, $j\in[1, L]$, such that  
$x_j=x_0$, choose an arbitrary transition $(x_j,t_j,x_j')\in\dt$ with $t_j\in T\cup\{\ep\}$;
at each $x_k$, $k\in[1, L]$, such that 
$x_k\ne x_0$, we define transition $x_k\xrightarrow[]{t_k}x_k'$, where $t_k=x_k'=\diamond$; add transition 
$(x_0,x_1,\dots,x_L)\xrightarrow[]{(t_0,t_1,\dots,t_L)}(x_0',x_1',\dots,x_L')$ to \eqref{ConComp_automata}
whenever $(x_0,x_{j_1},\dots,x_{j_l})\xrightarrow[]{(t_0,t_{j_1},\dots,t_{j_l})}(x_0',x_{j_1}',\dots,
x_{j_l}')$ is a transition of $\CCa(\Scal;\Scal_{j_1},\dots,\Scal_{j_l})$ or $t_0=t_{j_1}=\cdots=t_{j_l}=\ep$,
where $x_{j_1},\dots,x_{j_l}$ are exactly all states in $\{x_1,\dots,x_L\}$ such that $x_{j_1}=\cdots=x_{j_l}=
x_0$.
See Fig. \ref{fig2:codet} for a sketch. 
(\romannumeral3) 
We define $\ell(\diamond)=\diamond$. 
We have obtained \eqref{ConCompVariant_automata}.

Intuitively, in \eqref{ConCompVariant_automata}, for every transition from state $\bar x'$ to state $\hat x'$,
the number of $\diamond$'s in $\hat x'$ is no less than the number of $\diamond$'s in $\bar x'$, and in addition,
once a component in $\bar x'$ equals $\diamond$, then the same component in $\hat x'$ must also be $\diamond$. This observation can be stated as follows.
\begin{proposition}\label{prop2_co_DES}
	In \eqref{ConCompVariant_automata}, there is a transition sequence $(x_0,x_1,\dots,x_L)\xrightarrow[]
	{s'}(x_0',\diamond,\dots,\diamond)$ with $x_1,\dots,x_L\in X$
	if and only if for each $i\in[1,L]$, in $\CCa(\Scal;\Scal_i)$, there is a transition sequence 
	$(x_0,x_i)\xrightarrow[]{s}(\bar x_0,\bar x_i)$ such that $\bar x_0\ne\bar x_i$ and the 
	$0$-th component 
	of $(x_0,x_i)\xrightarrow[]{s}(\bar x_0,\bar x_i)$
	is a prefix of the $0$-th component of $(x_0,x_1,\dots,x_L)\xrightarrow[]
	{s'}(x_0',\diamond,\dots,\diamond)$.
\end{proposition}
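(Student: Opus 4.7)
The plan is to establish both directions of the equivalence separately, with the forward direction being a projection argument and the backward direction a phased interleaving construction.

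For the forward direction, suppose $(x_0,x_1,\dots,x_L)\xrightarrow{s'}(x_0',\diamond,\dots,\diamond)$ is a transition sequence in $\CCadia(\Scal;\Scal_1,\dots,\Scal_L)$. For each $i\in[1,L]$ I let $k_i$ be the earliest step index at which the $i$-th coordinate equals $\diamond$. By the construction rule (\romannumeral2) of $\CCadia$, a non-$\diamond$ coordinate is forced to $\diamond$ at the next step only if it already differs from the $0$-th coordinate; hence at step $k_i-1$ the state has shape $(\bar x_0^{(i)},\dots,\bar x_i,\dots)$ with $\bar x_0^{(i)}\ne\bar x_i$ and both in $X$. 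I then project the first $k_i-1$ transitions of $s'$ onto coordinates $0$ and $i$: fully synchronized $\CCadia$ steps project to $\CCa(\Scal;\Scal_i)$ steps directly, and partially divergent $\CCadia$ steps project, on the still-non-$\diamond$ components (which contain $i$ through step $k_i-1$), to steps of a smaller $\CCa$ that embeds into $\CCa(\Scal;\Scal_i)$. This yields a witness $(x_0,x_i)\xrightarrow{s_i}(\bar x_0^{(i)},\bar x_i)$ whose $0$-th component is, by construction, a prefix of the $0$-th component of $s'$.

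For the backward direction, the prefix hypothesis forces the $0$-th components of the witnesses to be linearly ordered, so I reorder them so that the $0$-th component of $s_i$ is a prefix of that of $s_{i+1}$ for every $i<L$, and build $s'$ in $L$ phases. Entering phase $i$, the current $\CCadia$ state has components $1,\dots,i-1$ equal to $\diamond$ and components $i,\dots,L$ still synchronized with the $0$-th. During phase $i$ I play, step by step, the joint $\CCa(\Scal;\Scal_i,\dots,\Scal_L)$ simulation that advances the $0$-th component along the unplayed suffix of $s_i(0)$, moves the $i$-th component along the rest of $s_i$, and extends each still-live $j>i$ along the corresponding portion of its own witness $s_j$. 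Because all $s_j(0)$ with $j\ge i$ agree with $s_i(0)$ on this portion, Proposition \ref{prop1_co_DES} lifts the family of single-observer witnesses to the joint concurrent composition, so every such joint step is indeed available in $\CCa$ and therefore in $\CCadia$. At the end of phase $i$, component $i$ sits at $\bar x_i\ne\bar x_0^{(i)}$, which entitles the next $\CCadia$ transition to send it to $\diamond$. Iterating for $i=1,\dots,L$ drives all non-$0$-th coordinates to $\diamond$.

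The main obstacle will be the handling of the asynchronous unobservable transitions in $\CCa$: a step in $\CCa(\Scal;\Scal_j)$ can advance only the $j$-th component via an event unobservable to $\Osf_j$ while the $0$-th stays put, yet in the joint $\CCa(\Scal;\Scal_i,\dots,\Scal_L)$, and therefore in $\CCadia$, at most one component can fire such an asynchronous step at a time. I resolve this by serializing: for each such move appearing in any witness $s_j$, I insert a dedicated $\ep$-on-the-$0$-th step in $s'$ that advances only component $j$. Since $0$-th components are compared as event words in $T^*$ with the intervening $\ep$'s collapsing, these insertions preserve the prefix relation, and the remaining bookkeeping is straightforward once the template ``play joint synchronized steps, then $\diamond$-ify a diverged coordinate'' is fixed.
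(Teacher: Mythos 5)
Your ``only if'' direction is sound: taking, for each $i$, the first step at which coordinate $i$ becomes $\diamond$, noting that the construction only $\diamond$-ifies a coordinate that currently differs from the $0$-th one, and projecting the preceding prefix onto coordinates $(0,i)$ does yield the required witness in $\CCa(\Scal;\Scal_i)$. The problem is in your ``if'' direction, and it is not the serialization issue you flag at the end; it is that the transitions you want to play in phases $i\ge 2$ do not exist in $\CCadia(\Scal;\Scal_1,\dots,\Scal_L)$. Once some coordinate is $\diamond$, the state lies outside $X^{L+1}$, so the only outgoing transitions are those added in step (\romannumeral2) of the construction, and such a transition simultaneously sends \emph{every} live coordinate $x_k\ne x_0$ to $\diamond$ and lets a live coordinate move only if it currently \emph{equals} the $0$-th coordinate (the surviving block must form a step of the joint composition of exactly the coordinates equal to $x_0$, or be all-$\ep$). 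Consequently: (a) your entry invariant ``components $i,\dots,L$ still synchronized with the $0$-th'' is not maintained --- the kill ending phase $i$ also kills every other live coordinate that happens to differ from the $0$-th at that moment, and the survivors are forced to sit exactly on the $0$-th coordinate's state; and (b) during phase $i$ you want coordinate $j>i$ to advance along an intermediate portion of its own witness $s_j$, where it is generally \emph{not} equal to the $0$-th coordinate, so the ``joint $\CCa(\Scal;\Scal_i,\dots,\Scal_L)$ step'' you invoke is simply not a transition of $\CCadia$; Proposition \ref{prop1_co_DES} gives you a path in $\CCa$, but $\CCa$ does not embed into the $\diamond$-region of $\CCadia$ in this way.

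The statement is still provable, but the argument has to respect the forced-kill semantics rather than fight it. Do all the witness-following you can while every coordinate is still live, using the original $\CCa(\Scal;\Scal_1,\dots,\Scal_L)$ transitions (these impose no equality constraint); after the first kill, observe that a premature death of a coordinate is harmless --- the target is precisely that all coordinates die --- and that a surviving coordinate $j$ can keep moving only while it coincides with the $0$-th coordinate, so either it deviates at some step and is killed at the next type-(\romannumeral2) transition, or it follows its witness to the end, where it sits at $\bar x_j\ne\bar x_0^{(j)}$ by hypothesis and is then killed by an all-$\ep$ kill step. Scheduling the $0$-th coordinate along the longest witness path $s_L(0)$ (the prefix hypothesis makes the $s_i(0)$ linearly ordered, as you note) makes every coordinate dead by the time its own witness is exhausted, which yields the desired path to $(x_0',\diamond,\dots,\diamond)$. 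Note also that the paper offers no proof at all for this proposition --- it is presented as an immediate consequence of the monotonicity of $\diamond$'s --- so the burden of getting the added transitions' exact shape right falls entirely on your write-up, and as it stands the central step of your backward construction uses transitions the automaton does not have.
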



\begin{figure}[htpb!]
	\begin{center}
		\begin{align*}
		\begin{array}[]{rcl}
			x_0\\
			x_0\\
			x_2\\
			\diamond
		\end{array}
		\quad
		\implies\quad
		\begin{array}[]{lll}
			x_0&\xrightarrow[]{t_0}&x_0'\\
			x_0&\xrightarrow[]{t_1}&x_1'\\
			x_2&\xrightarrow[]{\diamond}&\diamond\\
			\diamond&\xrightarrow[]{\diamond}&\diamond
		\end{array}
		\end{align*}
	\end{center}
	\caption{Sketch for computing transitions of $\CCadia(\Scal;\Scal_1,\dots,\Scal_L)$ \eqref{ConCompVariant_automata}.}
	\label{fig2:codet}
\end{figure}

%
%

\begin{example}\label{exam1_codet}
	Consider FSA $\Scal$ shown in Fig. \ref{fig1:codet}, where the labeling function $\ell$ is the
	identity map, i.e., all events $a,b,c,d$ can be directly observed. Consider two local observers
	$\Osf_1$ and $\Osf_2$, where $a,b$ can be observed by both observers, but $c$ can only be
	observed by $\Osf_1$, $d$ can only be observed by $\Osf_2$. That is, for $\Osf_1$, one has
	$O_1(a)=a$, $O_1(b)=b$, $O_1(c)=c$, and $O_1(d)=\ep$; for $\Osf_2$, one has
	$O_2(a)=a$, $O_2(b)=b$, $O_2(c)=\ep$, and $O_2(d)=d$.
	The local automaton corresponding to
	observer $\Osf_i$ is denoted by $\Scal_i$, $i=1,2$. Part of the concurrent composition 
	$\CCadia(\Scal;\Scal_1,\Scal_2)$ defined by \eqref{ConCompVariant_automata}
	is drawn in Fig. \ref{fig3:codet}.

	\begin{figure}[htpb!]
			\tikzset{global scale/.style={
    scale=#1,
    every node/.append style={scale=#1}}}
		\begin{center}
			\begin{tikzpicture}[global scale = 1.0,
				>=stealth',shorten >=1pt,thick,auto,node distance=2.5 cm, scale = 1.0, transform shape,
	->,>=stealth,inner sep=2pt,
				every transition/.style={draw=red,fill=red,minimum width=1mm,minimum height=3.5mm},
				every place/.style={draw=blue,fill=blue!20,minimum size=7mm}]
				\tikzstyle{emptynode}=[inner sep=0,outer sep=0]
				\node[state, initial, initial where = left] (x0) {$x_0$};
				\node[state] (x1) [right of = x0] {$x_1$};
				\node[state] (x2) [below of = x1] {$x_2$};
				\node[state] (x3) [right of = x1] {$x_3$};
				\node[state] (x4) [below of = x3] {$x_4$};

				\path[->]
				(x0) edge node [above, sloped] {$a$} (x1)
				(x0) edge node [above, sloped] {$a$} (x2)
				(x1) edge node [above, sloped] {$c,O_2(c)=\ep$} (x3)
				(x1) edge node [above, sloped] {$c,O_2(c)=\ep$} (x4)
				(x1) edge [loop below] node [below, sloped] {$b$} (x1)
				(x3) edge [loop right] node {$d,O_1(d)=\ep$} (x3)
				(x2) edge [loop right] node {$b$} (x2)
				(x4) edge [loop right] node {$d,O_1(d)=\ep$} (x4)
				;

			\end{tikzpicture}
	\end{center}
	\caption{FSA $\Scal$ with two local automata $\Osf_1$ and $\Osf_2$,
	where a state with an input arrow from nowhere denotes an initial one, e.g., $x_0$.}
	\label{fig1:codet}
	\end{figure}

	\begin{figure}[htpb!]
			\tikzset{global scale/.style={
    scale=#1,
    every node/.append style={scale=#1}}}
		\begin{center}
			\begin{tikzpicture}[global scale = 1.0,
				>=stealth',shorten >=1pt,thick,auto,node distance=2.5 cm, scale = 1.0, transform shape,
	->,>=stealth,inner sep=2pt,
				every transition/.style={draw=red,fill=red,minimum width=1mm,minimum height=3.5mm},
				every place/.style={draw=blue,fill=blue!20,minimum size=7mm}]
				\tikzstyle{emptynode}=[inner sep=0,outer sep=0]
				\tikzstyle{stateCom}=[shape=rectangle, draw, thick, fill=gray!10]
				\node[stateCom, initial, initial where = left] (000) {$\begin{matrix}x_0\\ x_0\\ x_0\end{matrix}$};
				\node[stateCom] [right of = 000] (111) {$\begin{matrix}x_1\\ x_1\\ x_1\end{matrix}$};
				\node[stateCom] [right of = 111] (344) {$\begin{matrix}x_3\\ x_4\\ x_1\end{matrix}$};
				\node[stateCom] [right of = 344] (3dd) {$\begin{matrix}x_3\\ \diamond\\ \diamond\end{matrix}$};
				\node[stateCom] [above of = 344] (113) {$\begin{matrix}x_1\\ x_1\\ x_3\end{matrix}$};
				\node[stateCom] [right of = 113] (11d) {$\begin{matrix}x_1\\ x_1\\ \diamond\end{matrix}$};

				\path[->]
				(000) edge node {$(a,a,a)$} (111)
				(111) edge [loop above] node [above, sloped] {$(b,b,b)$} (111)
				(111) edge node {$(c,c,\ep)$} (344)
				(344) edge node {$(d,\diamond,\diamond)$} (3dd)
				(111) edge node [above, sloped] {$(\ep,\ep,c)$} (113)
				(113) edge node [above, sloped] {$(b,b,\diamond)$} (11d)
				;

			\end{tikzpicture}
	\end{center}
	\caption{Part of concurrent composition $\CCadia(\Scal;\Scal_1,\Scal_2)$, where $\Scal$
	and its two local automata $\Scal_1,\Scal_2$ are shown in Fig. \ref{fig1:codet} and in Example 
	\ref{exam1_codet}.}
	\label{fig3:codet}
	\end{figure}
	
\end{example}

\begin{theorem}\label{thm1_codet}
	An FSA $\Scal$ \eqref{DES_FSA} is not $\Osf$-co-detectable if and only if
	in 
	concurrent composition 
	$\CCadia(\Scal;\Scal_1,\dots,\Scal_L)$ \eqref{ConCompVariant_automata},
	where each $\Scal_i$ is the local automaton corresponding to observer
	$\Osf_i$ defined by \eqref{local_DES_FSA}, $i\in[1,L]$,  
	\begin{subequations}\label{eqn1_codet}
		\begin{align}
			&\text{there is a transition sequence}\\
			&x_0'\xrightarrow[]{s_1'}x_1'\xrightarrow[]{s_2'}x_{1}'
			\xrightarrow[]{s_3'}x_2'\xrightarrow[]{s_4'}x_{2}'\xrightarrow[]{s_5'}\cdots
			\xrightarrow[]{s_{2L-1}'}x_L'\xrightarrow[]{s_{2L}'}x_{L}'\xrightarrow[]{s_{2L+1}'}x_{L+1}'\\
			&\text{such that }x_0'\text{ is initial;}\label{eqn1c_codet}\\
			&\text{for each }i\in[1,L],\text{ there is }k_i\in[1,L]\text{ such that }x_i'(k_i)\in X,\nonumber\\
			&O_{k_i}(s_{2i}'(0))=O_{k_i}(s_{2i}'(i))\in\Sig^+,\text{ and }k_j\ne k_{j'}\text{ for all different }j,j'\in[1,L];\label{eqn1d_codet}\\
			&\text{for all }i\in[1,L],\text{ }x_{L+1}'(i)=\diamond;\label{eqn1e_codet}\\
			&\text{and in }\Scal,\text{ there is a transition cycle reachable from }x_{L+1}'(0).\label{eqn1f_codet}
		\end{align}
	\end{subequations}
\end{theorem}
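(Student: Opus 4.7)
The plan is to establish the if-and-only-if via a correspondence between the structural pattern in $\CCadia$ and the quantifier pattern of non-$\Osf$-co-detectability given in Proposition \ref{prop1_codet}, making essential use of Propositions \ref{prop1_co_DES} and \ref{prop2_co_DES} to translate runs back and forth between $\Scal$ and the concurrent composition.

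For the \emph{if} direction, assume the transition sequence exists. First, I would extend the 0-th projection to an infinite event sequence $s\in L^\omega(\Scal)$ by appending infinitely many copies of the cycle promised by \eqref{eqn1f_codet}; more importantly, before doing so, I would iterate each cycle $s'_{2i}$ an arbitrary number of times, which remains a valid transition sequence in $\CCadia$ since each $s'_{2i}$ loops at $x_i'$. For a fixed $i$, the cycle $s'_{2i}$ is (prior to any $\diamond$ transitions on the relevant component) a cycle in the sub-composition $\CCa(\Scal;\Scal_{k_i})$, so by Proposition \ref{prop1_co_DES} its 0-th and $k_i$-th projections are two distinct $\Scal$-runs reading the same non-empty $O_{k_i}$-label. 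Since eventually (by \eqref{eqn1e_codet}) the $k_i$-th component becomes $\diamond$, these two projections must end at different states of $X$, so the current-state estimate $\Mt_{O_{k_i}}(\Scal,\cdot)$ at the corresponding prefix of $O_{k_i}(s)$ contains at least two states; pumping $s'_{2i}$ makes this prefix arbitrarily long. As this works simultaneously for every $i\in[1,L]$, Proposition \ref{prop1_codet} yields non-co-detectability.

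For the \emph{only if} direction, assume $\Scal$ is not $\Osf$-co-detectable. By Proposition \ref{prop1_codet} and a König-style compactness argument on the finite branching of $\Scal$, I would first extract a single infinite event sequence $s\in L^\omega(\Scal)$ such that, for every $i\in[1,L]$, arbitrarily long prefixes of $O_i(s)$ have ambiguous current-state estimate under $O_i$. For each such $i$, this yields an infinite family of ``shadow'' $\Scal$-runs matching a prefix of $s$ in $O_i$-observation but ending at distinct states from the corresponding prefix of $s$. Pigeonholing on the finite state space of $\CCa(\Scal;\Scal_i)$, I obtain for each $i$ an ``ambiguity cycle'' $\gamma_i$: a cycle in $\CCa(\Scal;\Scal_i)$ with distinct components and non-empty common $O_i$-label, visited infinitely often along the lifted run of $s$. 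I would then order these cycles along $s$ by the first time each is witnessed, giving the permutation $k_1,\dots,k_L$, and assemble the corresponding segments $s'_1,s'_2,\dots,s'_{2L+1}$ in $\CCadia$: the $s'_{2i-1}$ pieces are the intermediate non-cycling transitions, each $s'_{2i}$ is $\gamma_{k_i}$ lifted to $\CCadia$ (using Proposition \ref{prop2_co_DES} to keep the yet-unwitnessed components ``alive'' in $X$), and once cycle $\gamma_{k_i}$ is witnessed the $k_i$-th component is driven to $\diamond$. Finally, since $s$ is infinite, the 0-th component reaches a state from which $\Scal$ has a transition cycle, giving \eqref{eqn1f_codet}.

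The main obstacle will be the necessity direction, specifically the combinatorial reorganisation of the $L$ simultaneously-existing ambiguity cycles, extracted independently from the sub-compositions $\CCa(\Scal;\Scal_i)$, into a \emph{single} linear transition sequence in $\CCadia$ whose non-$\diamond$ components remain consistent with a common infinite $\Scal$-run. The delicate point is that when I execute the cycle $\gamma_{k_i}$ at stage $i$, the other still-alive components ($k_{i+1},\dots,k_L$) must continue to have their own ambiguity cycles reachable afterwards; handling this requires appealing to the recurrent (infinitely-often) nature of each $\gamma_{k_i}$ along $s$ and to Proposition \ref{prop2_co_DES}, which ensures that introducing $\diamond$ on a divergent component does not destroy the reachability of downstream structures.
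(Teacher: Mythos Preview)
Your ``if'' direction is essentially the paper's argument: pump each loop $s'_{2i}$, use \eqref{eqn1e_codet} to locate a point where components $0$ and $k_i$ differ, and append the cycle from \eqref{eqn1f_codet}. One wording issue: the two projections of the cycle $s'_{2i}$ itself need not end at different states; the distinctness is only guaranteed at the later position where the $k_i$-th component turns into $\diamond$. The paper uses exactly that later position to certify $|\Mt_{O_{k_i}}(\Scal,\sigma_i)|>1$.

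The ``only if'' direction has a real gap. You pigeonhole in each \emph{binary} composition $\CCa(\Scal;\Scal_i)$ to extract a cycle $\gamma_i$, and then propose to ``lift $\gamma_{k_i}$ to $\CCadia$''. But a cycle in $\CCa(\Scal;\Scal_{k_i})$ only forces the $0$-th and $k_i$-th coordinates to return; there is no reason the remaining still-alive coordinates $k_{i+1},\dots,k_L$ (which in $\CCadia$ are following their own shadow runs) should simultaneously return to the same state. So the lift is in general not a cycle in $\CCadia$, and you cannot write it as $x_i'\xrightarrow{s'_{2i}}x_i'$ as required by \eqref{eqn1_codet}. Proposition~\ref{prop2_co_DES} does not help here: it translates between $\CCadia$-runs ending in all-$\diamond$ and binary-composition runs with distinct components, but says nothing about synchronising cycles across coordinates. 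The paper avoids this by pigeonholing directly in $\CCadia$: it fixes one sufficiently large $n$, builds a single long run in the full $(L{+}1)$-fold product (where every coordinate is tracked), and for each $j$ finds a repeated $\CCadia$-state along the prefix where coordinate $j$ is still in $X$ and has produced at least $n$ many $O_j$-labels. The resulting cycles live in $\CCadia$ from the start, so all coordinates cycle automatically; the only remaining issue is that cycles for different $j$ may overlap or nest, and the paper resolves this with two explicit ``unfolding'' rewrites of the run.

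A secondary concern: your K\"onig/compactness step, extracting one infinite $s$ with $|O_i(s)|=\infty$ and recurrent ambiguity for every $i$, is not automatic. The witnessing sets are open (determined by finite prefixes) rather than closed, so a limit of the $s_k$ may lose all $O_i$-observable events past some point. The paper sidesteps this entirely by working with a single finite $n$ larger than the $\CCadia$ state space; no limit argument is needed.
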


\begin{proof}
	``if'': Assume Eqn. \eqref{eqn1_codet} holds. According to \eqref{eqn1_codet}, for all $k\in\Z_{+}$, we choose 
	$$s_1'(0)(s_2'(0))^{k}s_3'(0)(s_4'(0))^k\dots s'_{2L-1}(0)(s'_{2L}(0))^ks_{2L+1}(0)=:\hat s'\in
	L(\Scal),$$ then we have for every $i\in[1,L]$, 
	$$|O_{k_i}(s_1'(0)(s_2'(0))^k\dots s_{2i-1}'(0)(s_{2i}'(0))^k)|\ge ik,$$
	and there is $\s_i\sqsubset O_{k_i}(\hat s')$
	such that $$|\Mt_{O_{k_i}}(\Scal,\s_i)|>1.$$
	In addition, the transition cycle reachable from
	$x_{2L+1}'(0)$ can be extended to an infinite transition sequence, we conclude that $\Scal$ is not 
	$\Osf$-co-detectable by Proposition \ref{prop1_codet}.

	``only if'': Assume $\Scal$ is not $\Osf$-co-detectable. Choose sufficiently large $n\in\N$, then
	by Propositions \ref{prop1_co_DES}, \ref{prop1_codet}, and \ref{prop2_co_DES}, 
	in \eqref{ConCompVariant_automata}, there is a transition sequence
	\begin{equation}\label{eqn2_codet}
		x_0'\xrightarrow[]{s'}x_{L+1}'
	\end{equation}
	such that
	\begin{subequations}\label{eqn3_codet}
		\begin{align}
			&x_0'\text{ is initial};\\
			&x_{L+1}'(i)=\diamond\text{ for all }i\in[1,L];\\
			&\text{for every }i\in[1,L],\text{ there is }s_i'\sqsubset s'\text{ such that }
			|O_i(s_i'(0))|\ge n,\\
			&\text{and }x_i'(0)\ne x_i'(i)\in X,\text{ where }x_i'\text{ is such that }x_0'\xrightarrow[]{s_i'}x_i'
			\text{ is a prefix of }\eqref{eqn2_codet};\\
			&\text{and in }\Scal,\text{ there is a transition cycle reachable from }x_{L+1}'(0).
		\end{align}
	\end{subequations}
	
	Fix $j\in[1,L]$, and consider the above sub-transition sequence 
	\begin{equation}\label{eqn4_codet}
		x_0'\xrightarrow[]{s_j'}x_j'
	\end{equation}
	of \eqref{eqn2_codet}. Since $|O_j(s_j'(0))|\ge n$, and $n$ is sufficiently large,
	by the Pigeonhole Principle and the structure of \eqref{ConCompVariant_automata},
	in \eqref{eqn4_codet} there is a sub-transition cycle
	\begin{equation}\label{eqn8_codet}
		\bar x_j'\xrightarrow[]{\bar s_j'}\bar x_j' 
	\end{equation}
	such that $\bar x_j'(j)\in X$ and $O_j(\bar s_j'(0))\in\Sig^+$.
	We can make the following two modifications repetitively to \eqref{eqn2_codet}
	in order to obtain an extension 
	of \eqref{eqn2_codet} such that for all different $i,j\in[1,L]$, there exist non-overlap 
	sub-transition cycles shown in \eqref{eqn8_codet}, and \eqref{eqn3_codet} is still satisfied. 
	Thus, \eqref{eqn1_codet} holds.

	(\romannumeral1) 
	Assume for some different $i,j\in[1,L]$, there is a sub-transition sequence 
	\begin{equation}\label{eqn9_codet}
		x_i'\xrightarrow[]{s_{i_1}'}x_j'\xrightarrow[]{s_{j_1}'}x_i'\xrightarrow[]{s_{i_2}'}x_j'
	\end{equation}
	in \eqref{eqn2_codet} such that $O_i(s_{i_1}'(0)s_{j_1}'(0))\in\Sig^+$,
	$O_j(s_{j_1}'(0)s_{i_2}'(0))\in\Sig^+$, and $x'_i(i),x'_j(j)\in X$.
	We replace \eqref{eqn9_codet} by
	\begin{equation}\label{eqn10_codet}
		x_i'\xrightarrow[]{s_{i_1}'}x_j'\xrightarrow[]{s_{j_1}'}x_i'\xrightarrow[]{s_{i_2}'}x_j'
		\xrightarrow[]{s_{j_1}'}x_i'\xrightarrow[]{s_{i_2}'}x_j',
	\end{equation}
	where \eqref{eqn10_codet} contains non-overlap sub-transition cycles
	$x_i'\xrightarrow[]{s_{i_1}'}x_j'\xrightarrow[]{s_{j_1}'}x_i'$ and 
	$x_j'\xrightarrow[]{s_{j_1}'}x_i'\xrightarrow[]{s_{i_2}'}x_j'$.
 
	(\romannumeral2)
	On the other hand, assume for some different $i,j\in[1,L]$, there is a sub-transition sequence 
	\begin{equation}\label{eqn11_codet}
		x_i'\xrightarrow[]{s_{i_1}'}x_j'\xrightarrow[]{s_{j_1}'}x_j'\xrightarrow[]{s_{i_2}'}x_i'
	\end{equation}
	in \eqref{eqn2_codet} such that $O_i(s_{i_1}'(0)s_{j_1}'(0)s_{i_2}'(0))\in\Sig^+$,
	$O_j(s_{j_1}'(0))\in\Sig^+$, and $x'_i(i),x'_j(j)\in X$.
	We replace \eqref{eqn11_codet} by
	\begin{equation}\label{eqn12_codet}
		x_i'\xrightarrow[]{s_{i_1}'}x_j'\xrightarrow[]{s_{j_1}'}x_j'\xrightarrow[]{s_{i_2}'}x_i'
		\xrightarrow[]{s_{i_1}'}x_j'\xrightarrow[]{s_{j_1}'}x_j'\xrightarrow[]{s_{i_2}'}x_i',
	\end{equation}
	where \eqref{eqn12_codet} contains non-overlap sub-transition cycles
	$x_i'\xrightarrow[]{s_{i_1}'}x_j'\xrightarrow[]{s_{j_1}'}x_j'\xrightarrow[]{s_{i_2}'}x_i'$
	$x_j'\xrightarrow[]{s_{j_1}'}x_j'$.
\end{proof}

\begin{example}\label{exam2_codet}
	Reconsider the FSA $\Scal$ in Example \ref{exam1_codet} (in Fig. \ref{fig1:codet})
	with its two local automata 
	$\Scal_1$ and $\Scal_2$. We verify its $\{\Osf_1,\Osf_2\}$-co-detectability by Theorem \ref{thm1_codet}.
	In the concurrent composition $\CCadia(\Scal;\Scal_1,\Scal_2)$, there is a transition sequence
	\begin{align*}
		&(x_0,x_0,x_0)\xrightarrow[]{(a,a,a)}(x_1,x_1,x_1)\xrightarrow[]{(b,b,b)}(x_1,x_1,x_1)
		\xrightarrow[]{(c,c,\ep)}\\
		&(x_3,x_4,x_1)\xrightarrow[]{(d,\diamond,\diamond)}
		(x_3,\diamond,\diamond)
	\end{align*} shown 
	in Fig. \ref{fig3:codet} such that $(x_0,x_0,x_0)$ is initial, satisfying \eqref{eqn1c_codet};
	in self-loop $(x_1,x_1,x_1)\\\xrightarrow[]{(b,b,b)}(x_1,x_1,x_1)$, all components of
	$(x_1,x_1,x_1)$ are states of $\Scal$, and $O_1(b)=O_2(b)=b$ are of positive length, satisfying
	\eqref{eqn1d_codet}; the latter two components of state $(x_3,\diamond,\diamond)$ are both 
	$\diamond$, satisfying \eqref{eqn1d_codet}; in $\Scal$, there is a transition cycle $x_3
	\xrightarrow[]{d}x_3$ reachable from the $0$-th component $x_3$ of $(x_3,\diamond,\diamond)$,
	satisfying \eqref{eqn1e_codet}. Hence $\Scal$ is not $\{\Osf_1,\Osf_2\}$-co-detectable.
\end{example}

\subsubsection{Complexity analysis}

Next we prove that Theorem \ref{thm1_codet} provides a $\PSPACE$ verification algorithm
for $\Osf$-co-detectability.

\begin{theorem}\label{thm2_codet}
	Problem \ref{prob_codet} belongs to $\PSPACE$.
\end{theorem}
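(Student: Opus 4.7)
The plan is to leverage Theorem \ref{thm1_codet} and verify negation of $\Osf$-co-detectability in $\NPSPACE$, then apply Savitch's theorem ($\NPSPACE=\PSPACE$) and the closure of $\PSPACE$ under complement to conclude. The concurrent composition $\CCadia(\Scal;\Scal_1,\dots,\Scal_L)$ has at most $|X|(|X|+1)^L$ states, which is exponential in the input size, so we cannot construct it explicitly; however, each state is an $(L+1)$-tuple from $X\times(X\cup\{\diamond\})^L$, hence representable in $O((L+1)\log(|X|+1))$ bits, i.e., polynomial space. A single transition of $\CCadia$ can likewise be guessed and verified in polynomial space by inspecting $\dt$ and the labeling functions $O_1,\dots,O_L$ directly, without ever materializing the composition.

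The nondeterministic polynomial-space procedure will traverse the witness sequence of \eqref{eqn1_codet} on the fly. First I would guess an initial state $x'_0\in X_0\times X_0^L$, and then for each $i\in[1,L]$ perform the following phase, while storing only a constant number of tuples (each of size $O(L\log|X|)$) and some counters bounded by $|X|^{L+1}$ (so at most polynomially many bits): nondeterministically simulate transitions of $\CCadia$ until reaching the ``segment start'' state $x'_i$; then guess an index $k_i\in[1,L]$ not yet chosen (the running set of used indices fits in $L$ bits); then simulate a loop $x'_i\xrightarrow{s'_{2i}}x'_i$ by saving $x'_i$, iterating transitions while checking that $x'_i(k_i)\in X$ throughout, that $O_{k_i}$ produces at least one label along this loop (a single boolean flag), and that the tuple returns to $x'_i$. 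The length of each simulated subsequence can be bounded by $|X|^{L+1}$ by a pumping argument on $\CCadia$, so a polynomial-bit counter suffices to guarantee termination. After the $L$ phases, continue simulating transitions to a state $x'_{L+1}$ whose last $L$ components are all $\diamond$ (by construction, components once set to $\diamond$ stay $\diamond$, cf.\ Proposition \ref{prop2_co_DES}), and finally check that in $\Scal$ a transition cycle is reachable from $x'_{L+1}(0)$; the latter reachability-of-a-cycle test is standard and lies in $\NL\subseteq\PSPACE$, executable on-the-fly by guessing a state on the cycle, then guessing a nonempty path back to it.

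The main obstacle is the bookkeeping that ensures the $L$ guessed cycle segments are non-overlapping and use pairwise distinct indices $k_i$: this is exactly what condition \eqref{eqn1d_codet} demands, and it is what forces us to maintain, across the whole simulation, the set of already-used observer indices and the identity of the currently open cycle's anchor state. Since both pieces of state fit in $O(L\log|X|)$ space, the entire simulation runs in nondeterministic polynomial space.

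Putting these pieces together, negation of $\Osf$-co-detectability is in $\NPSPACE$, whence in $\PSPACE$ by Savitch's theorem, and therefore $\Osf$-co-detectability itself is in $\coPSPACE=\PSPACE$. This proves Problem \ref{prob_codet} is in $\PSPACE$.
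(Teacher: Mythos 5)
Your proposal is correct and matches the paper's own argument: both use Theorem \ref{thm1_codet} to reduce the question to checking condition \eqref{eqn1_codet}, verify it by nondeterministic on-the-fly search over $\CCadia(\Scal;\Scal_1,\dots,\Scal_L)$ without constructing it explicitly (guessing the states $x_1',\dots,x_{L+1}'$, pairwise distinct indices $k_i$, the labeled cycles, the $\diamond$ condition, and the reachable cycle in $\Scal$), and conclude via $\NPSPACE=\PSPACE$ (Savitch) and $\PSPACE=\coPSPACE$. Your extra bookkeeping (polynomial-bit path-length counters, non-overlap tracking) is just a more explicit rendering of the paper's ``nondeterministic search'' step.
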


\begin{proof}
	By Theorem \ref{thm1_codet}, checking negation of $\Osf$-co-detectability is equivalent to 
	checking whether \eqref{eqn1_codet} holds.
	
	Consider $\CCadia(\Scal;\Scal_1,\dots,\Scal_L)$ \eqref{ConCompVariant_automata}, but we
	do not construct it explicitly.
	Guess integers $k_1,\dots,k_L$ between $1$ and $L$ such that
	they are pairwise different, guess states $x_1',x_2',\dots,x_{L+1}'$ of 
	\eqref{ConCompVariant_automata}. 
	Check (\romannumeral1) $x_1'$ is reachable,
	(\romannumeral2) for all $i\in[1,L]$, $x_{i+1}'$ equals $x_{i}'$ or $x_{i+1}'$ is reachable
	from $x_{i}'$, (\romannumeral3) for all $i\in[1,L]$, $x_i'(k_i)\in X$,  (\romannumeral4)
	for all $i\in[1,L]$, there is a transition cycle $x_i'\xrightarrow
	[]{s_i'}x_i'$ such that $O_{k_i}(s_i'(0))\in\Sig^+$, (\romannumeral5) $x_{L+1}(l)=\diamond$ for all 
	$l\in[1,L]$, and (\romannumeral6) there is a transition cycle
	in $\Scal$ reachable from $x_{L+1}'(0)$. All above checking could be done by nondeterministic 
	search based on the transition relation of $\CCadia(\Scal;\Scal_1,\dots,\Scal_L)$.
	Hence whether \eqref{eqn1_codet} holds could be verified in $\NPSPACE$, i.e., in $\PSPACE$
	by Savitch's theorem \cite{Savitch1970PspaceNPspace}.
	Then by $\PSPACE=\coPSPACE$, Problem \ref{prob_codet} belongs to $\PSPACE$.
\end{proof}

\begin{corollary}
	Strong detectability of FSA \eqref{DES_FSA} can be verified in $\PTIME$.
\end{corollary}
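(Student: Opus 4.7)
The plan is to specialize Theorems \ref{thm1_codet} and \ref{thm2_codet} to the case $L=1$ with $O_1=\ell$. Under this choice, Definition \ref{def_codet} collapses exactly to Definition \ref{def_strdet}, so strong detectability of $\Scal$ is equivalent to $\{\ell\}$-co-detectability, and Theorem \ref{thm1_codet} yields a concrete characterization of its negation in terms of $\CCadia(\Scal;\Scal)$. The reason PTIME (rather than just PSPACE) is attainable here is that the blow-up in Theorem \ref{thm2_codet} came from the $L$-fold Cartesian product; with $L=1$ the product collapses to quadratic size.

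First I would construct $\CCadia(\Scal;\Scal)$ \eqref{ConCompVariant_automata} explicitly. Its state set $X\times(X\cup\{\diamond\})$ has size $|X|(|X|+1)$, and by the transition count recalled after \eqref{ConCompVariant_automata}, its transition set has size $O(|X|^2(|T_o|^2+|T_\ep|))$. Hence the whole structure is polynomial in $|\Scal|$ and is built in polynomial time.

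Next I would invoke Theorem \ref{thm1_codet} for $L=1$: $\Scal$ fails to be strongly detectable iff there exist states $x_0',x_1',x_2'$ in $\CCadia(\Scal;\Scal)$ and a transition sequence $x_0'\xrightarrow[]{s_1'}x_1'\xrightarrow[]{s_2'}x_1'\xrightarrow[]{s_3'}x_2'$ such that $x_0'$ is initial, $x_1'(1)\in X$ with $\ell(s_2'(0))\in\Sig^+$, $x_2'(1)=\diamond$, and $\Scal$ has a transition cycle reachable from $x_2'(0)$. Each of these six conditions is a classical graph-search question on a polynomial-sized graph: reachability of $x_1'$ from an initial state and of $x_2'$ from $x_1'$ are resolved by BFS/DFS on $\CCadia(\Scal;\Scal)$; the existence of a cycle at $x_1'$ whose $0$-th label component is nonempty is resolved by restricting to the subgraph of transitions with observable $0$-th component and running Tarjan's SCC algorithm, then checking that $x_1'$ sits in a nontrivial SCC of that subgraph; the existence of a reachable transition cycle from $x_2'(0)$ in $\Scal$ is another SCC computation on $\Scal$ itself. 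Precomputing the set of $x_1'$ with the required internal loop and the set of $\Scal$-states from which a cycle is reachable, then doing a single combined reachability search, yields an overall polynomial-time algorithm.

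The only point requiring care is \eqref{eqn1d_codet}, i.e., guaranteeing that the self-loop at $x_1'$ actually emits at least one observable label rather than being a pure $\ep$-cycle. The main obstacle to a naive transcription of Theorem \ref{thm2_codet} is precisely this: the PSPACE argument handled it by nondeterministic guessing of the loop. Here I avoid that by the SCC-on-filtered-subgraph trick above, namely computing SCCs of the subgraph of $\CCadia(\Scal;\Scal)$ obtained by deleting all transitions with $\ell$-silent $0$-th component and then marking those $x_1'$ that lie in a nontrivial SCC of this filtered subgraph; this is standard and runs in linear time in the size of $\CCadia(\Scal;\Scal)$.
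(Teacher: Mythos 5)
Your overall route is the paper's: specialize Theorem \ref{thm1_codet} to $L=1$, $O_1=\ell$, build the quadratic-size $\CCadia(\Scal;\Scal)$, and decide condition \eqref{eqn1_codet} by reachability and SCC computations. However, the subroutine you propose for \eqref{eqn1d_codet} is wrong. That condition asks for a cycle through $x_1'$ whose $0$-th component generates a nonempty label sequence, i.e., a cycle containing \emph{at least one} transition with observable $0$-th component; silent transitions may, and in $\CCadia(\Scal;\Scal)$ typically must, appear on the same cycle, because unobservable moves of the two tracked runs are never synchronized (rule \ref{item1_codet}) of the concurrent composition fires exactly one component at a time), only observable transitions are paired. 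Deleting every transition with $\ep$-silent $0$-th component and asking whether $x_1'$ lies in a nontrivial SCC of the filtered subgraph tests the strictly stronger property that some cycle through $x_1'$ consists \emph{entirely} of observable transitions, so your algorithm is one-sided: it can declare a system strongly detectable when it is not.

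Concretely, take $\Scal$ with the single initial state $x_0$ and transitions $x_0\xrightarrow{a}x_1$, $x_0\xrightarrow{a}x_2$, $x_1\xrightarrow{u}x_3$, $x_3\xrightarrow{b}x_1$, $x_2\xrightarrow{b}x_2$, where $u$ is unobservable and $a,b$ are observable with distinct labels. After observing $ab^k$ the current state estimate always contains $x_2$ together with $x_1$ or $x_3$, so $\Scal$ is not strongly detectable; the witness required by Theorem \ref{thm1_codet} is the cycle $(x_1,x_2)\xrightarrow{(u,\ep)}(x_3,x_2)\xrightarrow{(b,b)}(x_1,x_2)$, followed by $(x_1,x_2)\xrightarrow{(u,\diamond)}(x_3,\diamond)$ and the cycle $x_3\xrightarrow{b}x_1\xrightarrow{u}x_3$ in $\Scal$. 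This witnessing cycle mixes a silent and an observable transition, and the only all-observable cycle on states with both components in $X$ is the self-loop at $(x_2,x_2)$, from which no state with second component $\diamond$ is reachable; hence your filtered-SCC test finds no candidate $x_1'$ and the algorithm answers ``detectable''. The repair is standard, stays linear in the size of $\CCadia(\Scal;\Scal)$, and is what the paper's proof does implicitly: compute the SCCs of (the accessible part of) $\CCadia(\Scal;\Scal)$ itself and accept $x_1'$ exactly when its SCC contains some transition with observable $0$-th component joining two states of that SCC; with that change the remaining steps of your argument (reachability of $x_1'$, reachability of a state whose second component is $\diamond$, and a transition cycle of $\Scal$ reachable from its $0$-th component) are correct and the PTIME bound follows as claimed.
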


\begin{proof}
	The condition in Theorem \ref{thm1_codet} in case of $L=1$ and $O_1=\ell$ can be verified in 
	linear time in the size of $\CCadia(\Scal;\Scal)$ by computing all its strongly connected 
	components, and $\CCadia(\Scal;\Scal)$ can be computed in quadratic polynomial time in
	the size of $\Scal$.
\end{proof}

In \cite{Masopust2018ComplexityDetectabilityDES},
the $\NL$-hardness of verifying strong detectability of deterministic and deadlock-free
FSAs with a single initial state and a single (observable) event was proved, an $\NL$ upper 
bound for verifying
strong detectability of FSAs was also given under Assumption \ref{assum1_Det_PN}.
Next we show an $\NL$ upper bound without any assumption.

\begin{theorem}\label{thm4_codet}
	The problem of verifying strong detectability of FSA \eqref{DES_FSA} belongs to $\NL$.
\end{theorem}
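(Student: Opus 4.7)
The plan is to invoke Theorem~\ref{thm1_codet} in the special case $L=1$, $O_1=\ell$, express failure of strong detectability as a small conjunction of reachability and cycle-detection queries on $\CCadia(\Scal;\Scal)$ and on $\Scal$, verify each such query in nondeterministic logarithmic space, and then close under complement by the Immerman--Szelepcs\'enyi theorem ($\NL=\coNL$). The key observation that makes logspace (and not merely polynomial time) sufficient is that $\CCadia(\Scal;\Scal)$ has state set $X\times(X\cup\{\diamond\})$, so an individual state is encoded in $O(\log|X|)$ bits, and whether a given tuple is a transition can be decided in logspace directly from $\Scal$, as a Boolean combination of logspace-testable conditions pulled from the definition of the concurrent composition; the composition itself never has to be materialized.

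Specializing Theorem~\ref{thm1_codet}, $\Scal$ is not strongly detectable iff there exist states $x_0',x_1',x_2'$ of $\CCadia(\Scal;\Scal)$ such that $x_0'$ is initial, $x_1'(1)\in X$, $x_1'$ is reachable from $x_0'$, a cycle $x_1'\xrightarrow{s_2'}x_1'$ exists with $\ell(s_2'(0))\in\Sig^+$, $x_2'$ is reachable from $x_1'$ with $x_2'(1)=\diamond$, and in $\Scal$ there is a transition cycle reachable from $x_2'(0)$. The $\NL$ algorithm guesses and stores $x_1'$ and $x_2'$ in $O(\log|X|)$ bits and then runs the following reachability subroutines in sequence: guess $x_0'\in X_0\times X_0$ and check $x_0'\rightsquigarrow x_1'$ in $\CCadia(\Scal;\Scal)$; to witness the labelled cycle, guess an \emph{observable} edge $(y',\tau,y'')$ of $\CCadia(\Scal;\Scal)$, namely one with $\ell(\tau(0))=\ell(\tau(1))\in\Sig$, and verify $x_1'\rightsquigarrow y'$ and $y''\rightsquigarrow x_1'$; check $x_1'\rightsquigarrow x_2'$ together with $x_2'(1)=\diamond$; finally, to witness the terminal cycle in $\Scal$, guess $z\in X$ and $(z,t,z')\in\dt$ and verify $x_2'(0)\rightsquigarrow z$ and $z'\rightsquigarrow z$.

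Each reachability query is a standard $\NL$ directed-graph reachability computation on a polynomial-size graph with a logspace-computable adjacency oracle, and a constant-length pipeline of such queries still fits into $\NL$ because only the logspace-sized guesses $x_1',x_2'$ and the intermediate witnesses $y',y'',z$ are carried between stages. This places non-strong-detectability in $\NL$, and the closure $\NL=\coNL$ yields the claimed $\NL$ upper bound for strong detectability. The only real subtlety, and the main place where one has to be careful, is the bookkeeping: the definition of $\CCadia$ distinguishes several cases (observable synchronization, unobservable free moves, and $\diamond$-transitions), so one has to confirm that every edge test, and in particular the existence of a cycle containing at least one $\Sig$-labelled letter, decomposes into logspace-testable primitives on $\Scal$; this decomposition is immediate from the explicit definition of the composition given in \eqref{ConComp_automata}--\eqref{ConCompVariant_automata}.
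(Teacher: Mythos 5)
Your proposal is correct and follows essentially the same route as the paper's proof: specialize the concurrent-composition characterization to $L=1$, $O_1=\ell$, guess a constant number of states of $\CCadia(\Scal;\Scal)$ and of $\Scal$, and verify the reachability and observable/ordinary cycle conditions by nondeterministic logspace search without materializing the composition. Your explicit remarks about the logspace-testable adjacency relation and the appeal to $\NL=\coNL$ merely spell out details the paper leaves implicit.
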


\begin{proof}
	Guess states $x_1,\bar x_1,x_2,x_3$ of $X$.
	Check in $\CCadia(\Scal;\Scal)$: (\romannumeral1) $(x_1,\bar x_1)$ is reachable,
	(\romannumeral2) $(x_1,\bar x_1)$ belongs to an observable transition cycle, 
	(\romannumeral3) $(x_2,\diamond)$ is reachable from $(x_1,\bar x_1)$; and in $\Scal$:
	(\romannumeral4) $x_3$ is equal to $x_2$ or $x_3$ is reachable from $x_2$,
	(\romannumeral5) $x_3$ belongs to a transition cycle, all by nondeterministic search.
\end{proof}

In order to give a lower bound for negation of co-detectability, we adopt the 
$\NP$-complete acyclic DFA INTERSECTION problem
shown in \cite[Theorem 1]{Rampersad2010NP-hardAcycDFA}.
\begin{problem}[DFA INTERSECTION]\label{prob_DFAIntersec}

	INSTANCE: DFAs $\A_1,\dots,\A_n$ over the same alphabet $\Sig$.

	QUESTION: Does there exist $\s\in\Sig^*$ such that $\s$ is accepted by each
	$\A_i$, $1\le i\le n$?
\end{problem}

\begin{proposition}[\cite{Rampersad2010NP-hardAcycDFA}]\label{prop2_codet}
	Problem \ref{prob_DFAIntersec} is $\NP$-complete if the input DFAs are all acyclic.
\end{proposition}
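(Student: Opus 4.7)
The plan is to establish the two directions of $\NP$-completeness separately: membership in $\NP$ using the length bound imposed by acyclicity, and $\NP$-hardness via a reduction from $3$-$\SAT$.

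For the $\NP$ upper bound, first observe that if a DFA $\A$ with state set $Q$ is acyclic, then every word accepted by $\A$ has length at most $|Q|-1$, because any accepting run visits each state at most once. Hence, if there is a common word $\s\in\Sig^*$ accepted by all of $\A_1,\dots,\A_n$, its length is bounded by $\min_{i\in[1,n]}|Q_i|-1$, which is polynomial in the input size. A nondeterministic polynomial-time algorithm can therefore guess $\s$ letter by letter, maintain the current state of each $\A_i$ along the way, and at the end check that every $\A_i$ has reached an accepting state. This uses only polynomial time and space.

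For $\NP$-hardness, I would reduce from $3$-$\SAT$. Given a $3$-CNF formula $\varphi$ over variables $x_1,\dots,x_n$ with clauses $C_1,\dots,C_m$, set $\Sig=\{0,1\}$ and encode a truth assignment as a binary string of length $n$, with position $i$ giving the value of $x_i$. For each clause $C_j$, construct an acyclic DFA $\A_j$ whose states track two pieces of information: the number of input letters read so far (an integer in $[0,n]$) and a bit indicating whether the clause has already been satisfied by a previously read literal. Transitions simply increment the position counter and update the satisfaction bit based on whether the current letter matches a literal of $C_j$ involving $x_i$; after reading exactly $n$ letters, the DFA accepts iff the satisfaction bit is set. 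Each $\A_j$ is deterministic, has $O(n)$ states, and is acyclic because the position counter strictly increases along every transition. Clearly $\bigcap_{j=1}^{m}L(\A_j)$ is nonempty iff $\varphi$ is satisfiable, giving a polynomial-time reduction.

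The main obstacle is not conceptual but technical: one must carefully design the per-clause DFA so that it is both deterministic and acyclic while still capturing exactly the set of length-$n$ satisfying assignments, and one must verify that the construction is log-space (or at worst polynomial-time) computable so the reduction is valid. Once the DFA gadget is correctly specified, combining it with the length bound from the first paragraph yields $\NP$-completeness in the acyclic case, which is sharper than the $\PSPACE$-completeness of the unrestricted DFA intersection problem of \cite{Kozen1977FiniteAutomatonIntersection}.
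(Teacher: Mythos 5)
Your proof is correct and in essence matches the paper's justification: the paper simply observes that the $3$-$\SAT$ reduction in the cited proof of \cite[Theorem 1]{Rampersad2010NP-hardAcycDFA} already produces acyclic DFAs, which is exactly the kind of clause-gadget reduction (position counter plus satisfaction bit, accepting only at length $n$) you reconstruct explicitly, and your $\NP$ membership argument via the $|Q|-1$ length bound for acyclic DFAs is the standard short-witness argument implicit in the cited result. So the only difference is that you spell out the gadget the paper leaves to the reference; nothing is missing.
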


Note that the original version proved in \cite[Theorem 1]{Rampersad2010NP-hardAcycDFA}
is such that Problem \ref{prob_DFAIntersec} is $\NP$-complete if the input DFAs all
recognize finite languages.
However, in the proof of \cite[Theorem 1]{Rampersad2010NP-hardAcycDFA}, the well-known
$\NP$-complete $3$-$\SAT$ problem is reduced in polynomial time to Problem \ref{prob_DFAIntersec}
when all input DFAs are acyclic. Acyclic DFAs all recognize finite languages.
Hence Proposition \ref{prop2_codet} slightly strengthens \cite[Theorem 1]{Rampersad2010NP-hardAcycDFA}.

In order to analyze the complexity of co-detectability of FSAs, we strengthen Proposition
\ref{prop2_codet} slightly as follows.

\begin{proposition}\label{prop3_codet}
	Problem \ref{prob_DFAIntersec} is $\NP$-complete if the input 
	DFAs $\A_1,\dots,\A_n$ are acyclic, $\A_1$ has exactly one accepting state,
	and all other $\A_i$'s have all states accepting.
\end{proposition}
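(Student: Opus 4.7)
The plan is to strengthen Proposition \ref{prop2_codet} by a polynomial-time reduction from the acyclic DFA intersection problem of Proposition \ref{prop2_codet} to this constrained version. Membership in $\NP$ is inherited from Proposition \ref{prop2_codet}, since any problem restricting the class of admissible inputs is no harder; therefore only $\NP$-hardness needs to be established.

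Given an instance $\A_1,\dots,\A_n$ over $\Sig$ of the acyclic DFA intersection problem, I would introduce a fresh letter $\$\notin\Sig$ and construct modified acyclic DFAs $\A_1',\dots,\A_n'$ over $\Sig'=\Sig\cup\{\$\}$ as follows. For $\A_1'$: take $\A_1$, add a fresh state $q_f$, add a transition on $\$$ from every former accepting state of $\A_1$ to $q_f$, and declare $q_f$ to be the unique accepting state (all previously accepting states become non-accepting). For each $i\in[2,n]$, construct $\A_i'$ from $\A_i$ by adding a fresh state $q_f^i$ with a transition on $\$$ from every former accepting state of $\A_i$ into $q_f^i$, and then declaring every state (including $q_f^i$) to be accepting.

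It is then straightforward to check that each $\A_i'$ is acyclic (the fresh states $q_f$ and $q_f^i$ have only incoming transitions, all additions are on the new letter $\$$, and the originals are acyclic), that $\A_1'$ has exactly one accepting state, and that $\A_2',\dots,\A_n'$ have all states accepting. For correctness of the reduction, the key observation is that any word accepted by $\A_1'$ must end with $\$$, because $q_f$ is reachable only via $\$$-transitions from former accepting states of $\A_1$. Hence a common accepting word $w\in\bigcap_{i=1}^n L(\A_i')$ must be of the form $u\$$ for some $u\in\Sig^*$, and then $u\$\in L(\A_1')$ iff $u\in L(\A_1)$, while $u\$\in L(\A_i')$ for $i\ge 2$ iff the unique run of $\A_i$ on $u$ lands in a former accepting state of $\A_i$, i.e., iff $u\in L(\A_i)$. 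Thus the original acyclic DFA intersection problem has a positive answer iff the constrained version does.

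The main obstacle is essentially conceptual, namely ensuring the three structural constraints (acyclicity, unique accepting state of $\A_1'$, universal acceptance in $\A_i'$ for $i\ge 2$) coexist with a correct language-theoretic reduction. The $\$$-suffix trick handles this uniformly and preserves acyclicity for free, because the fresh states sit strictly below the original acyclic layered structure. Since the reduction adds only one state and at most $|Q_i|+1$ new transitions per automaton, it runs in polynomial time, so $\NP$-hardness transfers from Proposition \ref{prop2_codet} and completes the proof.
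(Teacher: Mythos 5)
Your proposal is correct and is essentially the paper's own proof: the paper adds a fresh sink state reachable by a new letter $\lambda$ from the former accepting states (making it the unique accepting state of $\A_1'$ and making all states of $\A_i'$, $i\ge 2$, accepting), which is exactly your $\$$-suffix construction up to renaming. Your explicit argument that any word in $L(\A_1')$ must end in the fresh letter is a welcome spelling-out of the backward direction that the paper leaves implicit.
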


\begin{proof}
	We are given acyclic DFAs $\A_1,\dots,A_n$ over the same alphabet $\Sig$.
	Construct acyclic DFA $\A'_1$ from $\A_1$ by adding
	transitions $q\xrightarrow[]{\lambda}\diamond_1$
	at each accepting state $q$, changing all accepting states to be
	non-accepting, and changing $\diamond_1$ to be accepting.
	For each $2\le i\le n$, construct acyclic DFA $\A'_i$ from $\A_i$ by adding
	transitions $q\xrightarrow[]{\lambda}\diamond_i$
	at each accepting state $q$, changing all non-accepting states, including $\diamond_i$,
	to be accepting. Then $\A_1'$ has exactly one accepting state $\diamond_1$, and $\A_2',\dots,\A_n'$
	have all states accepting. One sees that for each word $w\in\Sig^*$, $w$ is accepted 
	by all $\A_i$'s if and only if $w\lambda$ is accepted by all $\A_1',\dots,\A_n'$.
	By Proposition \ref{prop2_codet}, this proposition holds.
\end{proof}

Next we give a lower bound for co-detectability, where the reduction is inspired from 
the $\PSPACE$-hardness proof of verifying co-diagnosability of deterministic FSAs
\cite{Cassez2012ComplexityCodiagnosability}, instead, acyclic DFAs (but not general DFAs)
are chosen to do the reduction according to features of co-detectability.

\begin{theorem}\label{thm3_codet}
	Problem \ref{prob_codet} is $\coNP$-hard for deterministic FSAs.
\end{theorem}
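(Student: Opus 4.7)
The plan is to give a polynomial-time many-one reduction from the $\NP$-complete acyclic DFA intersection problem of Proposition \ref{prop3_codet} to the complement of Problem \ref{prob_codet}. Given acyclic DFAs $\A_1, \dots, \A_n$ over a common alphabet $\Sig$, with $\A_1$ having a unique accepting state and each $\A_j$ ($j \ge 2$) having all states accepting, I would construct in polynomial time a deterministic FSA $\Scal$ together with local observers $\Osf = \{\Osf_1, \dots, \Osf_n\}$ such that $\bigcap_{i=1}^{n} L(\A_i) \neq \emptyset$ if and only if $\Scal$ is \emph{not} $\Osf$-co-detectable.

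The high-level layout adapts the twin-automaton idea behind the Cassez--Tripakis $\PSPACE$-hardness proof for co-diagnosability to the setting of co-detectability. From a single initial state $s_0$, two distinct fresh unobservable events $\tau_1, \tau_2$ lead into two indistinguishable copies of a structure that jointly simulates the $\A_i$'s, each copy terminating in its own dead-end sink equipped with an unobservable self-loop so that completed runs extend to infinite-length event sequences (as required by Definition \ref{def_codet}). For each $a \in \Sig$ and $i \in [1, n]$ a dedicated event $e_{a, i}$ with label $a$ is introduced, observable only to $\Osf_i$. The simulation is arranged so that a sink is reachable exactly when some word $w$ has been processed that is accepted by $\A_1$ and not stuck in any $\A_j$ with $j \ge 2$, i.e., exactly when $w$ lies in $\bigcap_i L(\A_i)$.

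The correctness argument splits into two directions. For the forward direction, given $w \in \bigcap_i L(\A_i)$, both copies can process $w$ producing identical observations at every $\Osf_i$ and reaching their (distinct) sinks; the unobservable self-loops then preserve a two-element current state estimate at every $\Osf_i$ for arbitrarily long prefixes, so by Proposition \ref{prop1_codet} the system is not $\Osf$-co-detectable. For the reverse direction, if the intersection is empty then no run reaches a sink, and acyclicity of the $\A_i$'s rules out nonterminating behaviour before the sink, so $L^{\omega}(\Scal) = \emptyset$ and $\Osf$-co-detectability holds vacuously. Determinism of $\Scal$ follows from determinism of each $\A_i$ together with the distinctness of the two initial unobservable events.

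The hard part is realizing the joint simulation compactly: a naive $n$-fold Cartesian product over the $\A_i$'s would blow up exponentially. I would address this by processing each letter $a$ in an ordered phase over the indices $i \in [1, n]$, having the FSA state at any moment carry only one DFA's current state together with a small amount of bookkeeping (the phase counter, the letter tag, and the track label); verifying that this compact encoding faithfully captures the intersection condition, rather than admitting spurious runs that break one of the two directions, is the most delicate step, and the special form of the $\A_i$'s guaranteed by Proposition \ref{prop3_codet}, namely the uniqueness of $\A_1$'s accepting state and the all-accepting nature of the other $\A_j$'s, is what makes such a faithful compact encoding possible.
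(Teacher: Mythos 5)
Your choice of source problem (the acyclic DFA intersection problem in the form of Proposition \ref{prop3_codet}) and of target (negation of co-detectability) matches the paper, but the construction you sketch has two genuine flaws, one of them fatal to the whole approach. The fatal one is the ``compact joint simulation'': you want a polynomial-size FSA in which a sink is reachable exactly when some word lies in $\bigcap_{i} L(\A_i)$. If such an FSA were constructible in polynomial time, intersection non-emptiness --- an $\NP$-complete problem --- would be decidable in polynomial time by plain graph reachability, so no bookkeeping scheme (phase counter, letter tag, track label) that carries only one DFA's current state can be made faithful; the states of the other DFAs are irrecoverably lost between phases and spurious runs are unavoidable, and the special form of the $\A_i$'s does not help. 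The paper's reduction never encodes the intersection as reachability: it keeps pairwise disjoint copies of $\A_0,\dots,\A_n$ with per-copy alphabets $\Sig_i$ (all relabelled by $\ell$ back to $\Sig$), so every run of $\Scal$ stays inside a single copy, and it is the quantification over all local observers in the definition of co-detectability that enforces the intersection: observer $\Osf_i$ sees $\Sig_0\cup\Sig_i$ plus the framing events, so a single run through the $\A_0$-copy that remains confusing to every $\Osf_i$ simultaneously forces the underlying word to be readable in every $\A_i$, and the $\diamond_1/\diamond_2$ ambiguity forces acceptance by $\A_0$. The hardness thus sits in the property being verified, not in the reachability structure of the constructed automaton.

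The second flaw is that you equip the sinks with unobservable self-loops. Co-detectability counts observed output length (exactly the difficulty the paper emphasizes when contrasting it with co-diagnosability and co-predictability): in Definition \ref{def_codet} the requirement applies only to prefixes $\s\sqsubset O_i(s)$ with $|\s|\ge k$. With acyclic DFAs and unobservable sink loops, every $O_i(s)$ has length bounded by a constant of the instance, so choosing $k$ above that bound makes the implication vacuous and your $\Scal$ is co-detectable regardless of whether the intersection is empty --- the forward direction of your reduction fails even granting the simulation. The paper avoids this by labelling the self-loops at $\diamond_1$ and $\diamond_2$ with the observable event $a$, visible to all observers, so the ambiguous estimate $\{\diamond_1,\diamond_2\}$ persists along observations $awa^n$ of unbounded length; your reverse direction ($L^{\omega}(\Scal)=\emptyset$, hence vacuous co-detectability) is unobjectionable in itself but rests entirely on the unachievable faithful compact simulation.
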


\begin{proof}
	We prove this result by Proposition \ref{prop3_codet}.
	
	We are given acyclic DFAs $\A_0,\dots,\A_n$ over the same alphabet $\Sig$
	such that $\A_0$ has exactly one accepting state and all other $\A_i$'s have 
	all states accepting. Next we construct an FSA $\Scal$ from $\A_0,\dots,\A_n$ in polynomial time
	as shown in Fig. \ref{fig4:codet}.
	In each $\A_i$, $i\in[0,L]$, change each letter (i.e., event) $\s\in\Sig$ to $\s_i$,
	and set $\Sig_i=\{\s_i|\s\in\Sig\}$ and $\ell(\s_i)=\s$, where $\ell$ is the labeling function. 
	Add initial state $\diamond_0$ and transition $\diamond_0\xrightarrow[]{a_i}q$ to the initial state $q$
	of each $\A_i$, $i\in[0,L]$, where $\diamond_0$ differs from any state of any $\A_i$,
	$a_i\notin\bigcup_{i\in[0,L]} \Sig_i$, and set $\ell(a_i)=a$. 
	Change each initial state of each $\A_i$ to be non-initial,
	$i\in[0,L]$. Add two states $\diamond_1$ and $\diamond_2$ that are not any state of any $\A_i$,
	$i\in[0,L]$,
	and self-loops on them labeled by event $a$, and set $\ell(a)=a$.
	At the accepting state of $\A_0$, add transition to $\diamond_1$ labeled by event $a$.
	At each non-accepting state of $\A_0$ and each state of each $\A_i$, $i\in[1,L]$,
	add transition to $\diamond_2$ labeled by $a$. We have obtained the deterministic FSA $\Scal$
	with a unique initial state.
	The number of states of $\Scal$ equals the sum of numbers of states of all $\A_i$, $i\in[0,L]$,
	plus $3$ (corresponding to newly added states $\diamond_1,\diamond_2,\diamond_3$).
	The event set of $\Scal$ is $\bigcup_{i\in[0,L]}\Sig_i\cup\{a\}\cup\{a_i|i\in[0,L]\}$.

	\begin{figure}[htbp]
		\tikzset{global scale/.style={
    scale=#1,
    every node/.append style={scale=#1}}}
		\begin{center}
			\begin{tikzpicture}[global scale = 1.0,
				>=stealth',shorten >=1pt,thick,auto,node distance=2.5 cm, scale = 1.0, transform shape,
	->,>=stealth,inner sep=2pt,
				every transition/.style={draw=red,fill=red,minimum width=1mm,minimum height=3.5mm},
				every place/.style={draw=blue,fill=blue!20,minimum size=7mm}]
				\tikzstyle{emptynode}=[inner sep=0,outer sep=0]
				\node[state, initial] (dia0) {$\diamond_0$};
				\node[state] (s00) [above right of = dia0] {};
				\node[state, accepting] (s0f) [right of = s00] {};
				\node[state, accepting] (sif) [below right of = dia0] {};
				\node[state, accepting] (sif') [right of = sif] {};
				\node[state] (dia1) [right of = s0f] {$\diamond_1$};
				\node[state] (dia2) [right of = sif'] {$\diamond_2$};

				\path[->]
				(dia0) edge node [above, sloped] {$a_0(a)$} (s00)
				(dia0) edge node [above, sloped] {$a_i(a)$} (sif)
				(s0f) edge node [above, sloped] {$a$} (dia1)
				(s00) edge node [above, sloped] {$a$} (dia2)
				(sif) edge [bend right] node [above, sloped] {$a$} (dia2)
				(sif') edge node [above, sloped] {$a$} (dia2)
				(dia1) edge [loop right] node {$a$} (dia1)
				(dia2) edge [loop right] node {$a$} (dia2)
				;

				\draw[dashed] (3.0,1.8) ellipse (2cm and 1cm);
				\node at (3.0,1.8) {$\A_0$};

				\draw[dashed] (3.0,-1.8) ellipse (2cm and 1cm);
				\node at (3.0,-1.2) {$\A_i,1\le i\le n$};

			\end{tikzpicture}
			\caption{Sketch of the reduction in the proof of Theorem \ref{thm3_codet}.}
			\label{fig4:codet}
		\end{center}
	\end{figure}

	Now we specify local automaton $\Scal_i$, $i\in[1,L]$, we only need to specify the labeling function $O_i$
	of each $\Scal_i$. For each $i\in[1,L]$, the observable event set is
	$\Sig_0\cup\Sig_i\cup\{a\}\cup\{a_i|i\in[0,L]\}$.
	That is, at each location $i\in[1,L]$, observer $\Osf_i$ can observe $\A_0$, $\A_i$, and all
	transitions outside $\bigcup_{j\in[0,L]}\A_j$.

	In order to prove this theorem, we only need to prove that
	there is a word $w\in\Sig^*$ that is accepted by all $\A_i$, $i\in[0,L]$, if and only if,
	$\Scal$ is not $\Osf$-co-detectable.

	$\Rightarrow$: Assume $w=w^1\dots w^n\in\Sig^*$ that is accepted by all $\A_i$, $i\in[0,L]$,
	where $w^1,\dots,w^n\in\Sig$, $n\in\N$.
	Then $a_iw^1_i\dots w^n_ia^{\omega}\in L^{\omega}(\Scal)$ for all $i\in[0,L]$.
	For each $i\in[1,L]$, $O_i(a_iw^1_i\dots w^n_ia^{\omega})=awa^{\omega}$.
	At each location $i\in[1,L]$, for each $n\in\Z_{+}$, the current state estimate via $O_i$ is
	$\Mt_{O_i}(\Scal,awa^n)=\{\diamond_1,\diamond_2\}$. That is, $\Scal$ is not $\Osf$-co-detectable
	by Proposition \ref{prop1_codet}.

	$\Leftarrow$: Since each DFA $\A_i$, $i\in[0,L]$,
	is acyclic, it can only generate at most finitely many words (i.e., finite event sequences),
	denote the length of the longest
	words generated by $\A_i$ by $k_i$. And by the structure of $\Scal$,
	each possible infinite event sequence generated by $\Scal$ can only be of the form $a_iwa^{\omega}$
	for some $i\in[0,L]$, where $w\in\Sig_i^*$, and $|w|\le k_i$.
	Assume that $\Scal$ is not $\Osf$-co-detectable. Choose $m=2+\max\{k_i|i\in[0,L]\}$, 
	one has there is $a_\iota ua^{\omega}\in L^{\omega}(\Scal)$ for some $\iota\in[0,L]$ 
	and $u\in\Sig_{\iota}^*$ such that 
	$\Mt_{O_j}(\Scal,a O_i(u)a^{n})=\{\diamond_1,\diamond_2\}$ in each location $i\in[1,L]$,
	where $|a O_i(u)a^n|\ge m$. Assume $u=\ep$. Then one has $\ep$ leads $\A_0$ to an accepting state
	and further to $\diamond_1$, hence $\A_0$ has only one state, and $\ep$ is accepted by $\A_0$.
	Note that $\ep$ is always accepted by all 
	other $\A_i$, $i\in[1,L]$, since all states of $\A_i$ are accepting. Assume $u\ne\ep$.
	Assume $\iota=0$. Then for each $i\in[1,L]$, $O_i(a_{\iota}ua^n)=a\ell(u)a^n$. Since $\ell(u)$ leads $\A_0$
	to an accepting state and further to $\diamond_1$, 
	word $\ell(u)$ is accepted by $\A_0$. $\ell(u)$ is also accepted by all $\A_i$, $i\in[1,L]$,
	by negation of $\Osf$-co-detectability.
	Assume $\iota\in[1,L]$. If $L=1$, then $\ell(u)=O_\iota(u)$ is accepted by $\A_0$
	and $\A_{\iota}$. If $L>1$, choose $j\in[1,L]$, $j\ne\iota$, local observer $\Osf_j$ observes 
	$aa^n$, then by $\Mt_{O_j}(\Scal,aa^n)=\{\diamond_1,\diamond_2\}$, one has $\ep$ is accepted by
	$\A_0$. $\ep$ is always accepted by all $\A_i$, $i\in[1,L]$. Based on the above discussion,
	one always has there is a word
	$u\in\Sig^*$ that is accepted by all $\A_i$, $i\in[0,L]$, which completes the proof.
\end{proof}

\subsection{Co-diagnosability}

In this subsection, we study co-diagnosability. 
To characterize do-diagnosability, we need to simplify concurrent composition 
\eqref{ConComp_automata}, but not to extend it as in dealing with co-detectability,
because in co-diagnosability, we only need to count occurrences of events, but in 
co-detectability, we need to count generated outputs.

\subsubsection{Formulation}

We specify a subset $T_f\subset T$ of faulty
events. Transitions containing a faulty event are called {\it faulty transitions}. In the literature, e.g., in
\cite{Sampath1995DiagnosabilityDES,Jiang2001PolyAlgorithmDiagnosabilityDES,Yoo2002DiagnosabiliyDESPTime},
it is widely assumed that all faulty events are unobservable without loss of generality,
since if they are observable, then their
occurrences could be directly observed. However, we consider a more general case
such that there may exist different observable events that produce the same label. So, 
not all occurrences of all observable faulty events can be directly observed here.
For $s\in L(\Scal)$, we 
write $T_f\in s$ if $s$ contains a faulty event, and $T_f\notin s$ otherwise as usual.

\begin{definition}[CoDiag]\label{def_codiag}
	An FSA $\Scal$ \eqref{DES_FSA} is called {\it $(\Osf,T_f)$-co-diagnosable} if
	\begin{align*}
		&(\exists k\in\N)(\forall s\in L(\Scal)\cap T^*T_f)(\forall s':ss'\in L(\Scal))\\
		&[(|s'|\ge k)\implies {\bf D}],
	\end{align*}
	where ${\bf D}=(\exists i\in[1,L])[(\forall s''\in L(\Scal))
		[(O_i(s'')=O_i(ss')) \implies (T_f\in s'')]]$.
\end{definition}

This notion means that whenever a faulty event occurs, in at least one location $i\in[1,L]$, the observer
$\Osf_i$ can make sure that after a common time delay (representing the number of
occurrences of events),
all generated event sequences with
the same observation
must contain a faulty event.

When $L=1$ and $O_1=\ell$, Definition \ref{def_codiag} reduces to the following notion 
of {\it diagnosability} \cite{Sampath1995DiagnosabilityDES,Jiang2001PolyAlgorithmDiagnosabilityDES,Yoo2002DiagnosabiliyDESPTime}:

\begin{definition}[Diag]\label{def_diag}
	An FSA $\Scal$ \eqref{DES_FSA} is called {\it $T_f$-diagnosable} if
	\begin{align*}
		&(\exists k\in\N)(\forall s\in L(\Scal)\cap T^*T_f)(\forall s':ss'\in L(\Scal))\\
		&[(|s'|\ge k)\implies {\bf D}],
	\end{align*}
	where ${\bf D}=(\forall s''\in L(\Scal))[(\ell(s'')=\ell(ss')) \implies (T_f\in s'')]$.
\end{definition}

We consider the following problem.

\begin{problem}[FSA CO-DIAGNOSABILITY]\label{prob_codiag}
	
	INSTANCE: An FSA $\Scal$ \eqref{DES_FSA}, a faulty event set $T_f\subset T$, and a set
	$\Osf=\{\Osf_i|i\in[1,L]\}$ of local observers.

	QUESTION: Is $\Scal$ $(\Osf,T_f)$-co-diagnosable?
\end{problem}

The size of the input of Problem \ref{prob_codiag} is $|\Scal|+|T_f|+\sum_{i=1}^{L}|T_i|\le
|\Scal|+(L+1)|T|$,
where $\Scal$ and $T_i$'s are the same as those in Problem \ref{prob_codet}.

\subsubsection{Equivalent condition}

Similarly to characterizing $\Osf$-co-detectability, we show an equivalent condition for
$(\Osf,T_f)$-co-diagnosability through characterizing its negation.

\begin{proposition}\label{prop1_codiag}
	An FSA $\Scal$ \eqref{DES_FSA} is not $(\Osf,T_f)$-co-diagnosable if and only if
	\begin{align*}
		&(\forall k\in\N)(\exists ss'\in L({\Scal}):s\in T^*T_f)\\
		&[(|s'|\ge k)\wedge\neg{\bf D}],
	\end{align*}
	where $\neg{\bf D}=(\forall i\in[1,L])[(\exists s''\in L(\Scal))
		[(O_i(s'')=O_i(ss')) \wedge (T_f\notin s'')]]$.
\end{proposition}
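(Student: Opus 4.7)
The plan is to derive this equivalence by taking the straightforward logical negation of Definition \ref{def_codiag}, since the proposition is just its contrapositive restatement. First I would rewrite the definition of $(\Osf,T_f)$-co-diagnosability in prenex-like form with the quantifiers $\exists k \, \forall s \, \forall s'$ guarding the implication $(|s'|\ge k)\implies{\bf D}$, and then methodically dualize each quantifier: $\exists k$ becomes $\forall k$, $\forall s$ becomes $\exists s$, and $\forall s'$ becomes $\exists s'$, while the implication $P\implies Q$ flips to the conjunction $P\wedge\neg Q$, yielding $(|s'|\ge k)\wedge\neg{\bf D}$.

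The only nontrivial step is negating the nested formula ${\bf D}$. Since ${\bf D}$ has the shape $\exists i\in[1,L]\,\forall s''\in L(\Scal)\,[(O_i(s'')=O_i(ss'))\implies(T_f\in s'')]$, its negation is $\forall i\in[1,L]\,\exists s''\in L(\Scal)\,[(O_i(s'')=O_i(ss''))\wedge(T_f\notin s'')]$, obtained by flipping both inner quantifiers and replacing the implication by the conjunction of its hypothesis with the negated conclusion. The variable $s$ in the qualifying condition $s\in T^*T_f$ is preserved as a constraint on the existentially chosen event sequence, so that the negation correctly reads ``there exists a factorization $ss'\in L(\Scal)$ with $s$ containing a faulty event and $|s'|\ge k$, such that at every location $i$ some alternative fault-free sequence $s''$ produces the same observation $O_i(ss')$.''

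Since each of the steps above is a valid equivalence in classical first-order logic, composing them produces an equivalence between the formula of Definition \ref{def_codiag} and the formula of Proposition \ref{prop1_codiag}'s statement (after negating both), which is exactly what is claimed. The main obstacle, such as it is, is merely bookkeeping: keeping track of which $s,s',s''$ are free versus bound and ensuring that the constraint $s\in T^*T_f$ continues to bind $s$ in the negated formula. There is no combinatorial or automaton-theoretic content here; the proof is a one-line application of De Morgan's laws and the standard duality between quantifiers, and I would simply write it out accordingly.
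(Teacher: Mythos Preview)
Your approach is correct and matches the paper's treatment: the paper states Proposition~\ref{prop1_codiag} without proof, treating it as the immediate logical negation of Definition~\ref{def_codiag}, which is precisely what you carry out via quantifier duality and De~Morgan. One minor typo in your write-up: in the negated ${\bf D}$ you wrote $O_i(ss'')$ where you mean $O_i(ss')$.
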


In order to verify $(\Osf,T_f)$-co-diagnosability of FSA $\Scal$ \eqref{DES_FSA}, we compute 
a simplified version 
\begin{equation}\label{ConCompDiagnosis_automata}
	\CCa(\Scal;\Scal_1^\nsf,\dots,\Scal_L^\nsf)
\end{equation}
of concurrent composition  \eqref{ConComp_automata}, where for each $i\in[1,L]$,
automaton $\Scal_i^\nsf$ is obtained from local automaton $\Scal_i$ \eqref{local_DES_FSA} by removing all
its faulty transitions, where we call $\Scal_i^\nsf$ a {\it normal sub-local automaton in location $i$}.

\begin{theorem}\label{thm1_codiag}
	An FSA $\Scal$ \eqref{DES_FSA} is not $(\Osf,T_f)$-co-diagnosable if and only if
	in 
	concurrent composition $\CCa(\Scal;\Scal_1^\nsf,\dots,\Scal_L^\nsf)=(X',T',X_0',\dt')$
	\eqref{ConCompDiagnosis_automata},
	where each $\Scal_i^\nsf$ is the normal sub-local automaton in location $i$, $i\in[1,L]$,
	\begin{subequations}\label{eqn1_codiag}
		\begin{align}
			&\text{there is a transition sequence}\\
			&x_0'\xrightarrow[]{s_1'}x_1'\xrightarrow[]{t'}x_2'\xrightarrow[]{s_{2}'}x_{3}'\xrightarrow[]{s_{3}'}x_{3}'\text{ such that}\label{eqn1b_codiag}\\
			&x_0'\in X_0', x_1',x_2',x_{3}'\in X', s_{1}',s_{2}',s_{3}'\in(T')^*,t'\in T';\\
			&t'(0)\in T_f;\quad |s_3'(0)|>0.
		\end{align}
	\end{subequations}
	
\end{theorem}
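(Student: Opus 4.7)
The plan is to prove both directions by characterizing negation of $(\Osf,T_f)$-co-diagnosability via Proposition \ref{prop1_codiag}. The concurrent composition $\CCa(\Scal;\Scal_1^\nsf,\dots,\Scal_L^\nsf)$ is designed so that its $0$-th component tracks a run of $\Scal$ (possibly containing faulty events), while each $i$-th component tracks a fault-free run of $\Scal_i^\nsf$ whose $O_i$-observation agrees with that of the $0$-th component. Thus, condition \eqref{eqn1_codiag} precisely asserts the existence of such a pairing in which the $0$-th component fires a fault transition $t'$ and then enters a cycle $s_3'$ of positive length.

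For the sufficiency direction ($\Leftarrow$), I would project the transition sequence in \eqref{eqn1_codiag} onto each component. Setting $s := s_1'(0)t'(0)$ and $s_k' := s_2'(0)(s_3'(0))^k$, the $0$-th component yields a run $ss_k' \in L(\Scal)$ with $T_f \in s$ and $|s_k'| \to \infty$. For each $i \in [1,L]$, the $i$-th component provides a fault-free transition sequence of $\Scal_i^\nsf$, hence a fault-free word in $L(\Scal)$ whose $O_i$-observation matches $O_i(ss_k')$ by the definition of concurrent composition. Therefore $\neg{\bf D}$ holds for arbitrarily large delay, so $\Scal$ is not $(\Osf,T_f)$-co-diagnosable by Proposition \ref{prop1_codiag}.

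For the necessity direction ($\Rightarrow$), I would fix $k > |X'| = |X|^{L+1}$ and apply Proposition \ref{prop1_codiag} to obtain $ss' \in L(\Scal)$ with $s \in T^*T_f$, $|s'| \geq k$, together with fault-free $s_i'' \in L(\Scal)$ satisfying $O_i(s_i'') = O_i(ss')$ for each $i \in [1,L]$. Since each $s_i''$ is fault-free, it induces a transition sequence in $\Scal_i^\nsf$; by Proposition \ref{prop1_co_DES} these combine with $ss'$ into a single transition sequence in $\CCa(\Scal;\Scal_1^\nsf,\dots,\Scal_L^\nsf)$ from an initial state, with $0$-th component $ss'$ and $i$-th component $s_i''$. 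Taking $t'$ to be the transition whose $0$-th component is the last event of $s$ (so $t'(0) \in T_f$) identifies $x_1'$ and $x_2'$. The portion of the sequence after $t'$ contains at least $k > |X'|$ transitions whose $0$-th component is non-$\epsilon$, so by the pigeonhole principle applied to the concurrent-composition states visited immediately before these transitions, a repeated state $x_3'$ exists and delimits a sub-sequence $s_3'$ with $|s_3'(0)| > 0$, yielding \eqref{eqn1_codiag}.

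The main technical obstacle is guaranteeing $|s_3'(0)| > 0$: this is why the pigeonhole is deliberately applied to states adjacent to $0$-th component events rather than to arbitrary states of the concurrent composition, and why the threshold $|X'|$ is used. The overall argument is a substantial simplification of the proof of Theorem \ref{thm1_codet} because co-diagnosability counts event occurrences rather than observations, so no $\diamond$-states are needed and a single post-fault cycle suffices in place of the $L$ interlocked cycles required there.
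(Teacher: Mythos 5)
Your proof is correct and follows essentially the same route as the paper: the ``if'' direction pumps the cycle $s_3'$ and projects the composed run onto the $0$-th and $i$-th components to invoke Proposition \ref{prop1_codiag}, and the ``only if'' direction combines $ss'$ with the fault-free witnesses $s_i''$ via Proposition \ref{prop1_co_DES} and extracts the post-fault cycle by the Pigeonhole Principle. In fact your write-up of the necessity direction (pigeonholing on the states adjacent to transitions with non-$\ep$ $0$-th component to secure $|s_3'(0)|>0$) makes explicit exactly what the paper leaves as a one-line appeal to finiteness and pigeonhole.
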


\begin{proof}
	``if'': Assume \eqref{eqn1_codiag} holds. For all $k\in\Z_{+}$,
	we choose $$s_1'(0)t'(0)s_2'(0)(s_3'(0))^k\in L(\Scal),$$ then we have a transition sequence
	\begin{equation}\label{eqn2_codiag}
		x_0'\xrightarrow[]{s_1'}x_1'\xrightarrow[]{t'}x_2'\xrightarrow[]{s_{2}'}x_{3}'\xrightarrow[]{s_{3}'}x_{3}'
		\xrightarrow[]{s_{3}'}\cdots\xrightarrow[]{s_{3}'}x_{3}',
	\end{equation}
	where $|s_3'(0)\dots s_3'(0)|\ge k$.
	Since $t'(0)$ is a faulty event, and there exists no faulty event in all components of 
	\eqref{eqn2_codiag} except for the $0$-th component, we have $\Scal$ is not $\Osf$-co-diagnosable 
	by Proposition \ref{prop1_codiag}.

	``only if'': This implication holds by Propositions \ref{prop1_co_DES}, \ref{prop1_codiag},
	the finiteness of states of $\Scal$, and the Pigeonhole Principle.
\end{proof}

\begin{remark}
	Dr. St\'{e}phane Lafortune from University of Michigan also found a so-called 
	verifier-based test for $(\Osf,T_f)$-co-diagnosability that is equivalent to
	the result shown in Theorem \ref{thm1_codiag} independently in unpublished course notes
	provided to the authors. In addition, Theorem \ref{thm1_codiag} is actually
	equivalent to the result in \cite{Moreira2011Codiagnosability}.
\end{remark}

\begin{example}\label{exam1_codiag}
	Consider the FSA $\Scal$ shown in Fig. \ref{fig2:codiag}, where the labeling function $\ell$ 
	is defined by $\ell(a)=a$, $\ell(b)=b$, $\ell(f)=\ell(u)=\ep$, only $f$ is faulty.
	Consider two local observers $\Osf_1$ and $\Osf_2$, where $a$
	can only be observed by $\Osf_1$, $b$ can only be observed by $\Osf_2$. 
	The local automaton corresponding to observer $\Osf_i$ is denoted by $\Scal_i$, $i=1,2$,
	where the corresponding $\Scal_1^{\nsf}$ and $\Scal_2^{\nsf}$ are shown in Fig. \ref{fig3:codiag}.
	Part of the concurrent composition $\CCa(\Scal;\Scal_1^{\nsf},\Scal_2^{\nsf})$ defined 
	by \eqref{ConCompDiagnosis_automata} is drawn in Fig. \ref{fig4:codiag}.

	\begin{figure}[htpb!]
			\tikzset{global scale/.style={
    scale=#1,
    every node/.append style={scale=#1}}}
		\begin{center}
			\begin{tikzpicture}[global scale = 1.0,
				>=stealth',shorten >=1pt,thick,auto,node distance=2.5 cm, scale = 1.0, transform shape,
	->,>=stealth,inner sep=2pt,
				every transition/.style={draw=red,fill=red,minimum width=1mm,minimum height=3.5mm},
				every place/.style={draw=blue,fill=blue!20,minimum size=7mm}]
				\tikzstyle{emptynode}=[inner sep=0,outer sep=0]
				\node[state, initial, initial where = left] (x0) {$x_0$};
				\node[state] (x1) [right of = x0] {$x_1$};
				\node[state] (x2) [below of = x1] {$x_2$};
				\node[state] (x3) [right of = x1] {$x_3$};
				\node[state] (x4) [below of = x3] {$x_4$};
				\node[state] (x5) [right of = x3] {$x_5$};

				\path[->]
				(x0) edge node [above, sloped] {$a,O_2(a)=\ep$} (x1)
				(x0) edge node [above, sloped] {$a,O_2(a)=\ep$} (x2)
				(x1) edge node [above, sloped] {$b,O_1(b)=\ep$} (x3)
				(x2) edge node [above, sloped] {$b,O_1(b)=\ep$} (x4)
				(x3) edge node [above, sloped] {$f$} (x5)
				(x5) edge [loop right] node {$u$} (x5)
				(x4) edge [loop right] node {$u$} (x4)
				;
			\end{tikzpicture}
	\end{center}
	\caption{FSA $\Scal$ with two local automata $\Osf_1$ and $\Osf_2$.}
	\label{fig2:codiag}
	\end{figure}

	\begin{figure}[htpb!]
			\tikzset{global scale/.style={
    scale=#1,
    every node/.append style={scale=#1}}}
		\begin{center}
			\begin{tikzpicture}[global scale = 1.0,
				>=stealth',shorten >=1pt,thick,auto,node distance=2.5 cm, scale = 1.0, transform shape,
	->,>=stealth,inner sep=2pt,
				every transition/.style={draw=red,fill=red,minimum width=1mm,minimum height=3.5mm},
				every place/.style={draw=blue,fill=blue!20,minimum size=7mm}]
				\tikzstyle{emptynode}=[inner sep=0,outer sep=0]
				\node[state, initial, initial where = left] (x0) {$x_0$};
				\node[state] (x1) [right of = x0] {$x_1$};
				\node[state] (x2) [below of = x1] {$x_2$};
				\node[state] (x3) [right of = x1] {$x_3$};
				\node[state] (x4) [below of = x3] {$x_4$};
				\node[state] (x5) [right of = x3] {$x_5$};

				\path[->]
				(x0) edge node [above, sloped] {$a$} (x1)
				(x0) edge node [above, sloped] {$a$} (x2)
				(x1) edge node [above, sloped] {$b(\ep)$} (x3)
				(x2) edge node [above, sloped] {$b(\ep)$} (x4)
				(x5) edge [loop right] node {$u$} (x5)
				(x4) edge [loop right] node {$u$} (x4)
				;

				\node[emptynode] (empnode1) [below of = x0] {} ;

				\node[state, initial, initial where = left] [below of = empnode1] (x0') {$x_0$};
				\node[state] (x1') [right of = x0'] {$x_1$};
				\node[state] (x2') [below of = x1'] {$x_2$};
				\node[state] (x3') [right of = x1'] {$x_3$};
				\node[state] (x4') [below of = x3'] {$x_4$};
				\node[state] (x5') [right of = x3'] {$x_5$};

				\path[->]
				(x0') edge node [above, sloped] {$a(\ep)$} (x1')
				(x0') edge node [above, sloped] {$a(\ep)$} (x2')
				(x1') edge node [above, sloped] {$b$} (x3')
				(x2') edge node [above, sloped] {$b$} (x4')
				(x5') edge [loop right] node {$u$} (x5')
				(x4') edge [loop right] node {$u$} (x4')
				;
			\end{tikzpicture}
	\end{center}
	\caption{Normal sub-local automata $\Scal_1^{\nsf}$ (up) and $\Scal_2^{\nsf}$ (below)
	corresponding to $\Scal$ in Fig.
	\ref{fig2:codiag}.}
	\label{fig3:codiag}
	\end{figure}

	\begin{figure}[htpb!]
			\tikzset{global scale/.style={
    scale=#1,
    every node/.append style={scale=#1}}}
		\begin{center}
			\begin{tikzpicture}[global scale = 1.0,
				>=stealth',shorten >=1pt,thick,auto,node distance=2.5 cm, scale = 1.0, transform shape,
	->,>=stealth,inner sep=2pt,
				every transition/.style={draw=red,fill=red,minimum width=1mm,minimum height=3.5mm},
				every place/.style={draw=blue,fill=blue!20,minimum size=7mm}]
				\tikzstyle{emptynode}=[inner sep=0,outer sep=0]
				\tikzstyle{stateCom}=[shape=rectangle, draw, thick, fill=gray!10]
				\node[stateCom, initial, initial where = left] (000) {$\begin{matrix}x_0\\ x_0\\ x_0\end{matrix}$};
				\node[stateCom] [right of = 000] (120) {$\begin{matrix}x_1\\ x_2\\ x_0\end{matrix}$};
				\node[stateCom] [right of = 120] (122) {$\begin{matrix}x_1\\ x_2\\ x_2\end{matrix}$};
				\node[stateCom] [right of = 122] (324) {$\begin{matrix}x_3\\ x_2\\ x_4\end{matrix}$};
				\node[stateCom] [right of = 324] (344) {$\begin{matrix}x_3\\ x_4\\ x_4\end{matrix}$};
				\node[stateCom] [above of = 344] (544) {$\begin{matrix}x_5\\ x_4\\ x_4\end{matrix}$};
				\node[stateCom] [above of = 120] (110) {$\begin{matrix}x_1\\ x_1\\ x_0\end{matrix}$};
				\node[stateCom] [right of = 110] (111) {$\begin{matrix}x_1\\ x_1\\ x_1\end{matrix}$};

				\path[->]
				(000) edge node {$(a,a,\ep)$} (120)
				(120) edge node {$(\ep,\ep,a)$} (122)
				(122) edge node {$(b,\ep,b)$} (324)
				(324) edge node {$(\ep,b,\ep)$} (344)
				(344) edge node [above, sloped] {$(f,\ep,\ep)$} (544)
				(544) edge [loop left] node [above, sloped] {$(u,\ep,\ep)$} (544)
				(000) edge node [above, sloped] {$(a,a,\ep)$} (110)
				(110) edge node [above, sloped] {$(\ep,\ep,a)$} (111)
				;

			\end{tikzpicture}
	\end{center}
	\caption{Part of concurrent composition $\CCa(\Scal;\Scal_1^{\nsf},\Scal_2^{\nsf})$, where $\Scal$
	is shown in Fig. \ref{fig2:codiag}, $\Scal_1^{\nsf},\Scal_2^{\nsf}$ are shown in Fig.
	\ref{fig3:codiag}.}
	\label{fig4:codiag}
	\end{figure}
	
	We verify its $(\{\Osf_1,\Osf_2\},\{f\})$-co-diagnosability by Theorem \ref{thm1_codiag}.
	In the concurrent composition $\CCa(\Scal;\Scal_1^{\nsf},\Scal_2^{\nsf})$,
	there is a transition sequence
	\begin{align*}
		&(x_0,x_0,x_0)\xrightarrow[]{(a,a,{\ep})}(x_1,x_2,{x_0})\xrightarrow[]{({\ep,\ep},a)}
		(x_1,x_2,x_2)\xrightarrow[]{(b,{\ep},b)}(x_3,{x_2},x_4)\xrightarrow[]{({\ep,b,\ep})}\\
		&(x_3,x_4,x_4)\xrightarrow[]{(f,{\ep,\ep})}
		(x_5,x_4,x_4)\xrightarrow[]{(u,{\ep,\ep})}(x_5,x_4,x_4)
	\end{align*} shown 
	in Fig. \ref{fig4:codiag} such that $(x_0,x_0,x_0)$ is initial, there is an event
	$(f,{\ep,\ep})$ whose $0$-th component is $f$, and after $(f,{\ep,\ep})$, there is a transition 
	cycle $(x_5,x_4,x_4)\xrightarrow[]{(u,{\ep,\ep})}(x_5,x_4,x_4)$ such that the $0$-th component
	of $(u,{\ep,\ep})$ is of positive length, that is, \eqref{eqn1_codiag} is satisfied. 
	Hence $\Scal$ is not $(\{\Osf_1,\Osf_2\},\{f\})$-co-diagnosable.
\end{example}

\begin{remark}\label{rem1:codiag}
	We want to point out that the co-diagnosability notion studied in 
	\cite{Qiu2006DecentralizedFD} is stronger than Definition \ref{def_codiag},
	here we call the former strong co-diagnosability. Strong co-diagnosability is
	defined by changing ``$(|s'|\ge k)$'' in Definition \ref{def_codiag} to
	``$((|s'|\ge k) \vee (ss'\text{ deadlocks}))$''.
	For an FSA \eqref{DES_FSA}, the technique of adding at each deadlock state a 
	self-loop containing an unobservable normal event, preserves the strong co-diagnosability,
	but does not always preserve Definition \ref{def_codiag}. And for an FSA,
	after modifying it in this way, the two definitions coincide. 
	Hence the method developed in 
	\cite{Qiu2006DecentralizedFD} works after doing this modification to FSAs,
	it does not apply to general FSAs with deadlock states.

	Consider the FSA $\Scal_1$ shown in Fig. \ref{fig1:codiag}.
	Choose $k=1$, then for the unique event sequence
	$f$ ended by a faulty event generated by $\Scal_1$, there is no continuation of $f$.
	Hence $\Scal_1$ satisfies Definition \ref{def_diag} vacuously. However, after we add a 
	self-loop on the unique deadlock state $x_2$ labeled by any non-faulty event of $\Scal_1$
	or a fresh (also non-faulty) event not in $\Scal_1$, then 
	it does not satisfy Definition \ref{def_diag} any more: For all $k\in\Z_{+}$, 
	choose event sequence $f\diamond^k$ generated by the modified FSA, where $\diamond$
	(non-faulty, could be equal to $u$)
	is the event of the newly added self-loop on $x_2$, $f$ is faulty, $|\diamond^k|\ge k$,
	event sequence $u\diamond^{k}$ generated by the modified FSA does not contain any faulty event
	but $u\diamond^{k}$ and $f\diamond^k$  produce the same output sequence $\ep$.
	\begin{figure}[htpb!]
				\tikzset{global scale/.style={
    scale=#1,
    every node/.append style={scale=#1}}}
		\begin{center}
			\begin{tikzpicture}[global scale = 1.0,
				>=stealth',shorten >=1pt,thick,auto,node distance=2.5 cm, scale = 1.0, transform shape,
	->,>=stealth,inner sep=2pt,
				every transition/.style={draw=red,fill=red,minimum width=1mm,minimum height=3.5mm},
				every place/.style={draw=blue,fill=blue!20,minimum size=7mm}]
				\tikzstyle{emptynode}=[inner sep=0,outer sep=0]
				\node[state] (x2) {$x_2$};
				\node[state, initial, initial where = above] (x1) [right of = x2] {$x_1$};
				\node[state, initial, initial where = above] (x0) [left of = x2] {$x_0$};

				\path[->]
				(x0) edge node [above, sloped] {$f$} (x2)
				(x1) edge node [above, sloped] {$u$} (x2)
				;

			\end{tikzpicture}
	\end{center}
	\caption{FSA $\Scal_1$, where all events are unobservable, only $f$ is faulty.}
	\label{fig1:codiag}
\end{figure}

	Similarly, one can see that there does exist a diagnosable FSA such that
	after adding at its
	deadlock state a self-loop labeled by a faulty unobservable event, the 
	modified FSA becomes no longer diagnosable, e.g., the FSA shown in Fig. \ref{fig5:codiag}.

		\begin{figure}[htpb!]
				\tikzset{global scale/.style={
    scale=#1,
    every node/.append style={scale=#1}}}
		\begin{center}
			\begin{tikzpicture}[global scale = 1.0,
				>=stealth',shorten >=1pt,thick,auto,node distance=2.5 cm, scale = 1.0, transform shape,
	->,>=stealth,inner sep=2pt,
				every transition/.style={draw=red,fill=red,minimum width=1mm,minimum height=3.5mm},
				every place/.style={draw=blue,fill=blue!20,minimum size=7mm}]
				\tikzstyle{emptynode}=[inner sep=0,outer sep=0]
				\node[state, initial, initial where = above] (x0) {$x_0$};
				\node[state] (x1) [right of = x0] {$x_1$};
				\node[state] (x2) [left of = x0] {$x_2$};

				\path[->]
				(x0) edge node [above, sloped] {$f$} (x1)
				(x0) edge node [above, sloped] {$u$} (x2)
				(x2) edge [loop left] node {$u$} (x2)
				;

			\end{tikzpicture}
	\end{center}
	\caption{An FSA, where all events are unobservable, only $f$ is faulty.}
	\label{fig5:codiag}
\end{figure}
\end{remark}

\subsubsection{Complexity analysis}

\begin{theorem}\label{thm2_codiag}
	Problem \ref{prob_codiag} belongs to $\PSPACE$.
\end{theorem}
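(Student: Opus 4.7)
The plan is to follow the template of the proof of Theorem \ref{thm2_codet}, using the structural characterization given by Theorem \ref{thm1_codiag}. By that theorem, the complement of $(\Osf,T_f)$-co-diagnosability is equivalent to the existence in $\CCa(\Scal;\Scal_1^\nsf,\dots,\Scal_L^\nsf)$ of a transition sequence matching the pattern in \eqref{eqn1_codiag}. The key point is that while the concurrent composition can have up to $|X|^{L+1}$ states, each individual state is a tuple of $L+1$ states of $\Scal$, whose description has polynomial size in $|\Scal|+|T_f|+\sum_{i=1}^L |T_i|$; likewise, each transition $(\breve t^0,\breve t^1,\dots,\breve t^L)$ has polynomial size, and its admissibility can be tested locally from $\dt,\dt^1,\dots,\dt^L$ without materializing the full composition.

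The algorithm is as follows. Without constructing $\CCa(\Scal;\Scal_1^\nsf,\dots,\Scal_L^\nsf)$ explicitly, nondeterministically guess states $x_0',x_1',x_2',x_3'\in X'$ and a transition label $t'\in T'$. Then check, in turn: (i) $x_0'\in X_0'$, i.e., each component of $x_0'$ is initial in the corresponding factor automaton; (ii) $x_1'$ is reachable from $x_0'$ in the composition, by nondeterministic forward search that stores only the current state and generates successors on the fly from $\dt,\dt^1,\dots,\dt^L$; (iii) $(x_1',t',x_2')\in\dt'$ and $t'(0)\in T_f$; (iv) $x_3'$ is reachable from $x_2'$ by the same kind of on-the-fly search; and (v) there exists a cycle $x_3'\xrightarrow{s_3'}x_3'$ with $|s_3'(0)|>0$, which we test by additionally guessing an intermediate state $\hat x'$ and a transition $(x_3'',\bar t',\hat x'')$ along the way whose $0$-th component is a non-empty event, and then checking reachability of $\hat x'$ from $x_3'$ (with the required non-empty transition traversed) and of $x_3'$ from $\hat x'$.

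Each of these reachability subroutines uses only polynomial space, since at any moment we need to remember a constant number of composition states plus a transition counter bounded by $|X'|\le\prod_{i=0}^L|X|$, whose binary representation is polynomial in the input size. Therefore the non-membership problem for $(\Osf,T_f)$-co-diagnosability lies in $\NPSPACE$, which equals $\PSPACE$ by Savitch's theorem, and since $\PSPACE=\coPSPACE$, Problem \ref{prob_codiag} belongs to $\PSPACE$.

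I do not expect any serious obstacle here: the proof is essentially a streamlined version of Theorem \ref{thm2_codet} because the pattern in \eqref{eqn1_codiag} is simpler than \eqref{eqn1_codet}; in particular, no bookkeeping of pairwise-distinct indices $k_1,\dots,k_L$ or of $\diamond$-components is required, since here we only count event occurrences and not generated outputs. The mild technical care needed is merely to implement every reachability check as on-the-fly nondeterministic search over the implicitly defined transition relation $\dt'$, so as never to pay the exponential cost of writing $\CCa(\Scal;\Scal_1^\nsf,\dots,\Scal_L^\nsf)$ down.
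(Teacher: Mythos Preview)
Your proposal is correct and follows essentially the same approach as the paper: both invoke Theorem~\ref{thm1_codiag} and then verify the existence of the pattern in \eqref{eqn1_codiag} by guessing the relevant states of $\CCa(\Scal;\Scal_1^\nsf,\dots,\Scal_L^\nsf)$ and checking reachability and the cycle condition via on-the-fly nondeterministic search, concluding with Savitch's theorem and $\PSPACE=\coPSPACE$. The only cosmetic difference is that the paper guesses $x_1',x_2',x_3'$ (folding the initial state into ``$x_1'$ is reachable'') and explicitly allows $x_3'=x_2'$ since $s_2'\in(T')^*$ may be empty; you should make sure your reachability check in step~(iv) also permits the zero-step case.
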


\begin{proof}
	We prove this conclusion by Theorem \ref{thm1_codiag}. In $\CCa(\Scal;\Scal_1^n,\dots,\Scal_L^n)$,
	guess states $x_1',x_2',x_3'$. Check (\romannumeral1) $x_1'$ is reachable, (\romannumeral2)
	there is a transition $x_1'\xrightarrow[]{t'}x_2'$ such that $t'(0)\in T_f$,
	(\romannumeral3) $x_3'$ equals $x_2'$ or $x_3'$ is reachable from $x_2'$, and (\romannumeral4)
	there is a transition cycle $x_3'\xrightarrow[]{s_3'}x_3'$ such that $|s_3'(0)|>0$, all by nondeterministic
	search. Then similarly to Theorem \ref{thm2_codet}, we also have $(\Osf,T_f)$-co-diagnosability 
	of $\Scal$ can be verified in $\PSPACE$.
\end{proof}

%

It was proved in \cite{Cassez2012ComplexityCodiagnosability} that Problem \ref{prob_codiag} is
$\PSPACE$-hard for deterministic FSAs. It was proved in \cite{Berard2018DiagnosabilityPetriNet}
that the problem of verifying $T_f$-diagnosability of FSA \eqref{DES_FSA}
is $\NL$-complete by using a variant of the twin-plant structure and linear temporal logic
and reducing the $\NL$-complete $\PATH$ problem to 
negation of $T_f$-diagnosability in logarithmic space,
where the variant is actually equivalent to $\CCa(\Scal;\Scal^{\nsf})$.

\subsection{Co-predictability}

\subsubsection{Formulation}

Given an FSA \eqref{DES_FSA}, a faulty event subset $T_f\subset T$, and 
a set $\Osf=\{\Osf_i|i\in[1,L]\}$ of local observers, the definition of $(\Osf,T_f)$-co-predictability
is formulated as follows.

\begin{definition}[CoPred]\label{def_copredic}
	An FSA $\Scal$ \eqref{DES_FSA} is called {\it $(\Osf,T_f)$-co-predictable} if
	\begin{align*}
		&(\exists k\in\N)(\forall s\in L(\Scal)\cap T^*T_f)(\exists s'\sqsubset s:T_f\notin s')\\
		&(\exists i\in[1,L])(\forall uv\in L(\Scal))\\
		&[((O_i(s')=O_i(u)) \wedge (T_f\notin u) \wedge (|v|\ge k)) \implies (T_f\in v)].
	\end{align*}
\end{definition}

This notion means that once a faulty event will definitely occur, then before any faulty event occurs,
at least one local observer 
can make sure that after a common time delay (representing the number of occurrences of events),
all generated event sequences with the same observation
without any faulty event must be continued by an event sequence containing a faulty event, so as to raise an alarm
to definite occurrence of some faulty event.

When $L=1$ and $O_1=\ell$, Definition \ref{def_copredic} reduces to the following notion 
of {\it predictability} \cite{Genc2009PredictabilityDES}:

\begin{definition}[Pred]\label{def_predic}
	An FSA $\Scal$ \eqref{DES_FSA} is called {\it predictable} if
	\begin{align*}
		&(\exists k\in\N)(\forall s\in L(\Scal)\cap T^*T_f)(\exists s'\sqsubset s:T_f\notin s')
		(\forall uv\in L(\Scal))\\
		&[((\ell(s')=\ell(u)) \wedge (T_f\notin u) \wedge (|v|\ge k)) \implies (T_f\in v)].
	\end{align*}
\end{definition}

We consider the following problem.

\begin{problem}[FSA CO-PREDICTABILITY]\label{prob_copred}
	
	INSTANCE: An FSA $\Scal$ \eqref{DES_FSA}, a faulty event set $T_f\subset T$, and a set
	$\Osf=\{\Osf_i|i\in[1,L]\}$ of local observers.

	QUESTION: Is $\Scal$ $(\Osf,T_f)$-co-predictable?
\end{problem}

The size of the input of Problem \ref{prob_copred} is the same as that of Problem 
\ref{prob_codiag}.

\subsubsection{Equivalent condition}

Similarly to characterizing the previous two properties, we still first characterize negation 
of $(\Osf,T_f)$-co-predictability in order to obtain its equivalent condition.

\begin{proposition}\label{prop1_copred}
	An FSA $\Scal$ \eqref{DES_FSA} is not $(\Osf,T_f)$-co-predictable if and only if
	\begin{align*}
		&(\forall k\in\N)(\exists s_k\in L({\Scal})\cap T^*T_f)(\forall s'\sqsubset s_k:T_f\notin s')\\
		&(\forall i\in[1,L])(\exists uv\in L(\Scal))\\
		&[(O_i(s')=O_i(u)) \wedge (T_f\notin uv) \wedge (|v|\ge k)].
	\end{align*}
\end{proposition}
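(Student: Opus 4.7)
The plan is straightforward: Proposition \ref{prop1_copred} is obtained by direct quantifier negation of Definition \ref{def_copredic}, so my proof will consist of carefully applying de Morgan's laws to the formula defining $(\Osf,T_f)$-co-predictability and verifying that the resulting statement matches the claimed characterization.

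First, I would write Definition \ref{def_copredic} in the symbolic form
\[
(\exists k)(\forall s)(\exists s')(\exists i)(\forall uv)\,[P(k,s,s',i,u,v) \implies Q(v)],
\]
where $P(k,s,s',i,u,v)$ stands for the conjunction $(O_i(s')=O_i(u)) \wedge (T_f\notin u) \wedge (|v|\ge k)$ and $Q(v)$ stands for $T_f\in v$, and where the outer quantifiers $\forall s$, $\exists s'$, $\forall uv$ are restricted to $s\in L(\Scal)\cap T^*T_f$, $s'\sqsubset s$ with $T_f\notin s'$, and $uv\in L(\Scal)$ respectively. Negating this formula flips each quantifier and turns the implication $P\implies Q$ into the conjunction $P\wedge \neg Q$, yielding
\[
(\forall k)(\exists s)(\forall s')(\forall i)(\exists uv)\,[P(k,s,s',i,u,v) \wedge \neg Q(v)].
\]

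Next I would restore the domain restrictions, renaming $s$ to $s_k$ to stress that the witness depends on $k$, and observe that $\neg Q(v)$ is $T_f\notin v$. Combining the clause $T_f\notin u$ inside $P$ with the clause $T_f\notin v$ from $\neg Q$ gives the single clause $T_f\notin uv$, which is exactly the form appearing in the statement of the proposition. The remaining clauses $O_i(s')=O_i(u)$ and $|v|\ge k$ appear verbatim. This yields the right-hand side of the proposition and completes the equivalence.

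There is no real obstacle here; the only thing to be careful about is the handling of the restricted quantifiers. In particular, the restriction $T_f\notin s'$ in Definition \ref{def_copredic} sits inside an existential quantifier over $s'$, so when it is negated it becomes a universal quantifier over $s'$ whose restriction is still $T_f\notin s'$ (the restriction defines the domain and is not itself negated). The same care applies to the restrictions $s\in L(\Scal)\cap T^*T_f$ and $uv\in L(\Scal)$. Once this is done correctly, the equivalence of the two formulas is immediate, and the proposition follows. I would therefore present the proof as a brief computation of the negation, with a one-line justification explaining the merging of $T_f\notin u$ and $T_f\notin v$ into $T_f\notin uv$.
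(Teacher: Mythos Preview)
Your proposal is correct and matches the paper's approach: the paper states Proposition~\ref{prop1_copred} without any explicit proof, treating it as the immediate logical negation of Definition~\ref{def_copredic}, which is precisely what you carry out. Your observation that the clauses $T_f\notin u$ and $T_f\notin v$ merge into $T_f\notin uv$ is the only nontrivial step, and you handle it correctly.
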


In order to verify $(\Osf,T_f)$-co-predictability of FSA $\Scal$ \eqref{DES_FSA}, we need to compute
an even more simplified version 
\begin{equation}\label{ConCompPrediction_automata}
	\CCa(\Scal^{\nsf};\Scal_1^\nsf,\dots,\Scal_L^\nsf)
\end{equation} of concurrent composition \eqref{ConComp_automata},
where $\Scal^n$ is the
{\it normal sub-automaton} obtained from $\Scal$ by removing all its faulty transitions,
for each $i\in[1,L]$,
automaton $\Scal_i^\nsf$ is the previously defined  normal sub-local automaton in location $i$, $i\in[1,L]$.
Note that \eqref{ConCompPrediction_automata} is even simpler than \eqref{ConCompDiagnosis_automata}
that is used to characterize co-diagnosability.

\begin{theorem}\label{thm1_copred}
	An FSA $\Scal$ \eqref{DES_FSA} is not $(\Osf,T_f)$-co-predictable if and only if
	in 
	concurrent composition $\CCa(\Scal^{\nsf};\Scal_1^\nsf,\dots,\Scal_L^\nsf)=(X',T',X_0',\dt')$
	\eqref{ConCompPrediction_automata},
	where $\Scal^{\nsf}$ is the normal sub-automaton of $\Scal$, and  each $\Scal_i^\nsf$ is the 
	normal sub-local automaton in location $i$, $i\in[1,L]$,
	\begin{subequations}\label{eqn1_copred}
		\begin{align}
			&\text{there is a transition sequence}\\
			&x_0'\xrightarrow[]{s_1'}x_1'\text{ such that}\\
			&x_0'\in X_0', x_1'\in X', s_{1}'\in(T')^*;\\
			&(x_1'(0),t_f,x)\in\dt\text{ for some }t_f\in T_f\text{ and  }x\in X;\\
			&\text{for each }i\in[1,L],\text{ in }\Scal_i^\nsf,\text{ there is a transition cycle
			reachable from }x_1'(i).
		\end{align}
	\end{subequations}
\end{theorem}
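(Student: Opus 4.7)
The plan is to reduce the iff to Proposition \ref{prop1_copred}'s reformulation of non-co-predictability and then mimic the two-directional argument used in Theorems \ref{thm1_codet} and \ref{thm1_codiag}: an existential-witness construction for the ``if'' direction and a Pigeonhole-plus-lifting argument for the ``only if'' direction.

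For the \emph{if} direction, I would take the hypothesized transition sequence $x_0' \xrightarrow{s_1'} x_1'$ and, for any desired time delay $k$, use $s_k := s_1'(0) t_f \in L(\Scal) \cap T^* T_f$ as the violating event sequence required by Proposition \ref{prop1_copred}; since $s_1'(0)$ lives in $\Scal^{\nsf}$, it is faulty-free. Any faulty-free prefix $s'$ of $s_k$ is then a prefix of $s_1'(0)$; truncating $s_1'$ at the corresponding position produces a prefix transition sequence $x_0' \xrightarrow{\tilde s} \tilde x$ with $\tilde s(0) = s'$. The built-in concurrent-composition invariant $O_i(\tilde s(0)) = O_i(\tilde s(i))$ then lets me pick $u := \tilde s(i)$ as the observation-matching faulty-free event sequence at location $i$. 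For $v$, I extend $u$ along the rest of $s_1'(i)$, then along the path leading to the guaranteed cycle, and then iterate that cycle in $\Scal_i^{\nsf}$ enough times to make $|v| \ge k$; the whole of $uv$ stays inside $\Scal_i^{\nsf}$, hence in $L(\Scal)$ and free of $T_f$.

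For the \emph{only if} direction, I fix a delay $k > |X|$ and apply Proposition \ref{prop1_copred} to obtain $s_k \in L(\Scal) \cap T^* T_f$ whose longest faulty-free prefix $\alpha$ is immediately followed by some $t_f \in T_f$. For each location $i$, Proposition \ref{prop1_copred} also supplies $u_i, v_i$ with $u_i v_i \in L(\Scal^{\nsf})$, $O_i(\alpha) = O_i(u_i)$, and $|v_i| \ge k$. Proposition \ref{prop1_co_DES}, applied with $s^0 = \alpha$ and $s^i = u_i$, lifts the simultaneous observation-matching to a transition sequence $x_0' \xrightarrow{s_1'} x_1'$ in $\CCa(\Scal^{\nsf}; \Scal_1^{\nsf}, \dots, \Scal_L^{\nsf})$ whose 0-th component is $\alpha$ and whose $i$-th component is $u_i$. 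The faulty transition at $x_1'(0)$ is witnessed by $t_f$, and continuing with $v_i$ of length exceeding $|X|$ inside $\Scal_i^{\nsf}$ from $x_1'(i)$ forces a revisited state by the Pigeonhole Principle, producing a cycle reachable from $x_1'(i)$ and yielding \eqref{eqn1_copred}.

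The main obstacle is the careful quantifier matching in the ``if'' direction: Proposition \ref{prop1_copred} demands a witness $uv$ for \emph{every} faulty-free prefix $s'$ of $s_k$ and \emph{every} location $i$, so the argument must invoke the concurrent-composition label-matching invariant at every intermediate point of $s_1'$, not only at its endpoint, and must show that the continuations in each $\Scal_i^{\nsf}$-component can be chosen independently without disturbing the 0-th component witness. Once this is in hand, membership in the faulty-free language, the length bound $|v| \ge k$, and the existence of the location-dependent cycle are all routine bookkeeping.
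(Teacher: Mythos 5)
Your proposal is correct and follows essentially the same route as the paper, whose proof of this theorem is simply the one-line observation that it follows from Propositions \ref{prop1_co_DES} and \ref{prop1_copred}; your writeup fills in exactly those details (the witness $s_1'(0)t_f$ with cycle-pumping in each $\Scal_i^{\nsf}$ for the ``if'' direction, and the lift of the matched faulty-free sequences via Proposition \ref{prop1_co_DES} plus the Pigeonhole Principle for the ``only if'' direction). No gaps.
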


\begin{proof}
	It directly follows from Propositions \ref{prop1_co_DES} and \ref{prop1_copred}.
\end{proof}

\begin{example}\label{exam1_copred}
	Reconsider the FSA $\Scal$ and the two local automata $\Scal_1,\Scal_2$
	shown in Fig. \ref{fig2:codiag}, and
	the corresponding $\Scal_1^{\nsf}$ and $\Scal_2^{\nsf}$ shown in Fig. \ref{fig3:codiag}.
	Observe that the concurrent composition $\CCa(\Scal^{\nsf};\Scal_1^{\nsf},\Scal_2^{\nsf})$
	defined by \eqref{ConCompPrediction_automata} could be obtained from 
	$\CCa(\Scal;\Scal_1^{\nsf},\Scal_2^{\nsf})$ by removing all transitions labeled by events
	$(f,*,*)$. Then from Fig. \ref{fig4:codiag}, one sees a reachable state $(x_3,x_4,x_4)$
	of $\CCa(\Scal^{\nsf};\Scal_1^{\nsf},\Scal_2^{\nsf})$ such that there is a faulty 
	transition $(x_3,f,x_5)$ in $\Scal$, and in $\Scal_i^{\nsf}$, $i=1,2$, there is a transition cycle
	$x_4\xrightarrow[]{u}x_4$ reachable from the $i$-th component $x_4$ of $(x_3,x_4,x_4)$.
	Then by Theorem \ref{thm1_copred}, $\Scal$ is not $(\{\Osf_1,\Osf_2\},\{f\})$-co-predictable.
\end{example}

\begin{remark}\label{rem1:copredic}
	We want to point out that technique used in \cite{Kumar2010CoPrognosisDES} of 
	adding at each deadlock state an unbound unobservable trace (e.g., an unobservable 
	self-loop) that is not observed by all local observers does not always preserve
	co-detectability.
	
	Consider the FSA $\Scal_2$ shown in Fig. \ref{fig1:copredic}.
	Choose $k=2$, then for the unique event sequence $f$ generated by $\Scal_2$
	ended by a faulty event, for $\ep\sqsubset
	f$, (1) choose $\ep u$, one has $|u|<2$, (2) choose $u\ep$, one also has $|\ep|<2$,
	hence $\Scal_2$ satisfies Definition \ref{def_predic} vacuously. However, if we add
	self-loops on $x_1$ and $x_2$ both labeled by $u$, then the modified $\Scal_2$ becomes 
	no longer predictable: For all $k\in\Z_{+}$, choose $f$, choose $\ep\sqsubset f$, the existence
	of event sequence $uu^k$ generated by the modified $\Scal_2$ violates Definition 
	\ref{def_predic}.

	\begin{figure}[htpb!]
				\tikzset{global scale/.style={
    scale=#1,
    every node/.append style={scale=#1}}}
		\begin{center}
			\begin{tikzpicture}[global scale = 1.0,
				>=stealth',shorten >=1pt,thick,auto,node distance=2.5 cm, scale = 1.0, transform shape,
	->,>=stealth,inner sep=2pt,
				every transition/.style={draw=red,fill=red,minimum width=1mm,minimum height=3.5mm},
				every place/.style={draw=blue,fill=blue!20,minimum size=7mm}]
				\tikzstyle{emptynode}=[inner sep=0,outer sep=0]
				\node[state, initial, initial where = above] (x0) {$x_0$};
				\node[state] (x1) [right of = x0] {$x_1$};
				\node[state] (x2) [left of = x0] {$x_2$};

				\path[->]
				(x0) edge node [above, sloped] {$f$} (x1)
				(x0) edge node [above, sloped] {$u$} (x2)
				;

			\end{tikzpicture}
	\end{center}
	\caption{FSA $\Scal_2$, where all events are unobservable, only $f$ is faulty.}
	\label{fig1:copredic}
\end{figure}

In the above modification, we add unobservable normal self-loops. It is easy to see if we
add a self-loop on $x_2$ labeled by $f$, then the predictability of $\Scal_2$ will be preserved.
Despite of this, we have the following example such that by adding at a 
deadlock state a self-loop labeled by an unobservable faulty event, predictability is not 
preserved. Consider $\Scal_3$ in Fig. \ref{fig2:copredic}. It is predictable vacuously since there
is no generated event sequence ended by a faulty event. However, if we add a self-loop
on the unique deadlock state $x_1$ labeled by a faulty event $f$, then the modified
$\Scal_3$ becomes no longer predictable.

	\begin{figure}[htpb!]
				\tikzset{global scale/.style={
    scale=#1,
    every node/.append style={scale=#1}}}
		\begin{center}
			\begin{tikzpicture}[global scale = 1.0,
				>=stealth',shorten >=1pt,thick,auto,node distance=2.5 cm, scale = 1.0, transform shape,
	->,>=stealth,inner sep=2pt,
				every transition/.style={draw=red,fill=red,minimum width=1mm,minimum height=3.5mm},
				every place/.style={draw=blue,fill=blue!20,minimum size=7mm}]
				\tikzstyle{emptynode}=[inner sep=0,outer sep=0]
				\node[state, initial, initial where = above] (x0) {$x_0$};
				\node[state] (x1) [right of = x0] {$x_1$};
				\node[state] (x2) [left of = x0] {$x_2$};

				\path[->]
				(x0) edge node [above, sloped] {$u$} (x1)
				(x0) edge node [above, sloped] {$u$} (x2)
				(x2) edge [loop left] node {$u$} (x2)
				;

			\end{tikzpicture}
	\end{center}
	\caption{FSA $\Scal_3$, where all events are normal and unobservable.}
	\label{fig2:copredic}
\end{figure}
\end{remark}

\subsubsection{Complexity analysis}

\begin{theorem}\label{thm2_copred}
	Problem \ref{prob_copred} belongs to $\PSPACE$.
\end{theorem}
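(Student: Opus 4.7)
The plan is to mimic the strategy already used for Theorems~\ref{thm2_codet} and \ref{thm2_codiag}: reduce the question to a nondeterministic reachability-style search in the concurrent composition $\CCa(\Scal^{\nsf};\Scal_1^\nsf,\dots,\Scal_L^\nsf)$, avoid building that composition explicitly (since it has size $|X|^{L+1}$ in the worst case), and then appeal to Savitch's theorem together with $\PSPACE=\coPSPACE$.

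First I would invoke Theorem~\ref{thm1_copred} to turn the problem of verifying $(\Osf,T_f)$-co-predictability into the problem of verifying negation, namely the existence of a state $x_1'$ of $\CCa(\Scal^{\nsf};\Scal_1^\nsf,\dots,\Scal_L^\nsf)$ satisfying the four conditions in \eqref{eqn1_copred}. A state of this concurrent composition is an $(L+1)$-tuple over $X$, hence has a representation of size $O(L\log|X|)$, which is polynomial in the size of the input. Transitions of the composition can be checked on the fly from the data $\Scal,\Scal_1^\nsf,\dots,\Scal_L^\nsf$ without ever materialising the composition.

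The nondeterministic polynomial-space procedure then proceeds as follows. Guess a candidate state $x_1'$ of the composition. (i) Verify that $x_1'$ is reachable from some initial state of the composition by a standard nondeterministic step-by-step search that stores only the current state and a step counter bounded by $|X|^{L+1}$ (so $O((L+1)\log|X|)$ bits). (ii) Verify that there exist $t_f\in T_f$ and $x\in X$ with $(x_1'(0),t_f,x)\in\dt$; this is a constant-space local check on $\Scal$. (iii) For each $i\in[1,L]$, verify separately in $\Scal_i^\nsf$ that some transition cycle is reachable from $x_1'(i)$, by guessing a state $y_i$, checking reachability from $x_1'(i)$ to $y_i$, and then checking that $y_i$ lies on a cycle; each such check is a nondeterministic logspace search in $\Scal_i^\nsf$, and the $L$ checks can be done sequentially reusing space.

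All guesses and subroutines run in $\NPSPACE$, so by Savitch's theorem the whole procedure runs in $\PSPACE$; applying $\PSPACE=\coPSPACE$ places Problem~\ref{prob_copred} itself in $\PSPACE$. I do not expect any serious obstacle: the structure of the argument is identical to the proofs of Theorems~\ref{thm2_codet} and \ref{thm2_codiag}, and in fact it is slightly easier here because the characterisation \eqref{eqn1_copred} demands only one composed state $x_1'$ together with $L$ independent cycle-reachability checks inside the individual automata $\Scal_i^\nsf$, rather than chained reachability/cycle conditions in the composition itself.
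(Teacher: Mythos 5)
Your proposal is correct and follows essentially the same route as the paper's own proof: invoke Theorem~\ref{thm1_copred}, guess the state $x_1'$ of $\CCa(\Scal^{\nsf};\Scal_1^\nsf,\dots,\Scal_L^\nsf)$ without constructing the composition explicitly, verify reachability, the faulty transition at $x_1'(0)$, and the per-location cycle-reachability in each $\Scal_i^\nsf$ by nondeterministic search, and conclude via Savitch's theorem and $\PSPACE=\coPSPACE$. You are in fact slightly more explicit than the paper (which compresses the reachability check into ``similarly to Theorem~\ref{thm2_codiag}''), but the argument is the same.
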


\begin{proof}
	By Theorem \ref{thm1_copred}, in $\CCa(\Scal^{\nsf};\Scal_1^\nsf,\dots,\Scal_L^\nsf)$, we 
	guess a state $x_1'$. Check (\romannumeral1) $(x_1'(0),t_f,x)\in\dt$ for some $t_f\in T_f$ and $x\in X$,
	(\romannumeral2) for each $i\in[1,L]$, in $\Scal_i^n$, there is a transition cycle reachable from 
	$x_1'(i)$, all by nondeterministic search. Hence similarly to Theorem \ref{thm2_codiag}, 
	we also have $(\Osf,T_f)$-co-predictability of $\Scal$ can be verified in $\PSPACE$.
\end{proof}

\begin{corollary}
	$T_f$-predictability of FSA \eqref{DES_FSA} can be verified in $\PTIME$.
\end{corollary}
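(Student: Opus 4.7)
The plan is to instantiate Theorem \ref{thm1_copred} in the centralized setting $L=1$, $O_1=\ell$, where Definition \ref{def_copredic} collapses to Definition \ref{def_predic}, and then observe that each of the resulting graph-theoretic conditions is computable in polynomial time.

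First I would form the normal sub-automaton $\Scal^{\nsf}$ by deleting every transition whose event lies in $T_f$; this takes linear time in $|\Scal|$. Next I would build the concurrent composition $\CCa(\Scal^{\nsf};\Scal^{\nsf})$, whose state set is contained in $X\times X$ and whose transitions are bounded by $|X|^2\cdot(|T_o|+|T_\ep|)$ as remarked after \eqref{ConComp_automata}; so the structure has size $O(|\Scal|^2)$ and is constructible in quadratic time.

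By Theorem \ref{thm1_copred} specialized to $L=1$, $\Scal$ fails to be $T_f$-predictable iff there is a reachable state $x_1'=(x_1'(0),x_1'(1))$ in $\CCa(\Scal^{\nsf};\Scal^{\nsf})$ such that (a) some faulty transition $(x_1'(0),t_f,x)\in\dt$ exists in $\Scal$, and (b) in $\Scal^{\nsf}$ there is a transition cycle reachable from $x_1'(1)$. I would precompute: the set $R$ of states of $\CCa(\Scal^{\nsf};\Scal^{\nsf})$ reachable from $X_0'$ via a standard graph search; the set $F\subseteq X$ of states of $\Scal$ with at least one outgoing faulty transition, by scanning $\dt$; and the set $C\subseteq X$ of states of $\Scal^{\nsf}$ from which a transition cycle is reachable, obtained by running Tarjan's SCC algorithm on $\Scal^{\nsf}$, marking every nontrivial SCC (or trivial SCC with a self-loop), and then doing a reverse reachability search in $\Scal^{\nsf}$ from those marked states. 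Each of these computations runs in time linear in $|\CCa(\Scal^{\nsf};\Scal^{\nsf})|$ or $|\Scal|$. Finally I would iterate over $R$ and accept ``not predictable'' iff some $(y,z)\in R$ satisfies $y\in F$ and $z\in C$; otherwise $\Scal$ is $T_f$-predictable.

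There is no real obstacle: the only thing to be slightly careful about is condition (b), which concerns reachability of a \emph{transition cycle} (i.e.\ allowing the cycle to be reached by a nonempty or empty path, and allowing the cycle itself to have length one), so the ``cycle'' test must include trivial SCCs with self-loops; the SCC-based computation of $C$ handles this uniformly. Since all steps are polynomial in $|\Scal|$, the verification algorithm runs in $\PTIME$.
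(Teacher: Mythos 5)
Your proposal is correct and follows essentially the same route as the paper: specialize Theorem \ref{thm1_copred} to $L=1$, $O_1=\ell$, build $\CCa(\Scal^{\nsf};\Scal^{\nsf})$ in quadratic time, and check its condition by standard reachability/SCC computations in time linear in the size of the composition. The extra care you take about cycles reachable from $x_1'(1)$ (including self-loops) and your slightly loose transition-count bound do not affect the $\PTIME$ conclusion.
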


\begin{proof}
	The condition in Theorem \ref{thm1_copred} in case of $L=1$ and $O_1=\ell$ can be verified in 
	linear time in the size of $\CCa(\Scal^{\nsf};\Scal^\nsf)$,
	and $\CCa(\Scal^{\nsf};\Scal^\nsf)$ can be computed
	in quadratic polynomial time in the size of $\Scal$.
\end{proof}

\begin{theorem}\label{thm4_copred}
	The problem of verifying $T_f$-predictability of FSA \eqref{DES_FSA} is $\NL$-complete.
\end{theorem}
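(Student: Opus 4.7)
The plan is to prove $\NL$-completeness in two parts: membership via Theorem \ref{thm1_copred} specialised to the centralised case, and $\NL$-hardness via a log-space reduction from the $\NL$-complete $\PATH$ problem, in the spirit of the treatment of diagnosability in \cite{Berard2018DiagnosabilityPetriNet}.

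For the $\NL$ upper bound I will test negation of $T_f$-predictability and then invoke $\NL=\coNL$. By Theorem \ref{thm1_copred} in the centralised case $L=1$ with $O_1=\ell$, $\Scal$ fails to be $T_f$-predictable iff there is a reachable state $(p,q)$ in $\CCa(\Scal^{\nsf};\Scal^{\nsf})$ such that $p$ admits an outgoing faulty transition in $\Scal$ and some transition cycle in $\Scal^{\nsf}$ is reachable from $q$. The composition has $|X|^2$ states, and any of its transitions can be regenerated from $\Scal$ in logarithmic space, so I do not build it explicitly. The algorithm guesses the pair $(p,q)$, nondeterministically walks an initial composition state to $(p,q)$ using only log-sized pointers, confirms by a scan over $T_f$ and $\dt$ that a faulty transition leaves $p$, then guesses a state $r$ reachable from $q$ in $\Scal^{\nsf}$ and checks that $r$ is reachable from itself by a nonempty path. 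Each subtask is standard $\NL$ reachability on a polynomial-size graph, so the combined procedure lies in $\NL$.

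For $\NL$-hardness I will reduce $\PATH$ to negation of $T_f$-predictability. Given a directed graph $G=(V,E)$ with source $s$ and sink $t$, I construct $\Scal$ on state set $V\cup\{t'\}$ with initial state $s$, a single unobservable normal event $e$ realising every edge of $E$ together with an added self-loop $t\xrightarrow{e}t$, and a single faulty event $f$ occurring only in the transition $t\xrightarrow{f}t'$, with $T_f=\{f\}$. If there is a path from $s$ to $t$ in $G$ then both components of $\CCa(\Scal^{\nsf};\Scal^{\nsf})$ can follow that path to the composite state $(t,t)$ by interleaving unobservable transitions; the faulty edge at the first component and the self-loop at the second then witness non-predictability via Theorem \ref{thm1_copred}. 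Conversely, $t$ is the unique source of a faulty transition and the self-loop at $t$ is the only cycle of $\Scal^{\nsf}$, so any witness $(p,q)$ forces $p=t$ and hence $t$ is reachable from $s$ in $\Scal^{\nsf}$, i.e., in $G$. The construction is clearly log-space.

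The main obstacle I anticipate is keeping the hardness gadget tight: the only reachable cycle of $\Scal^{\nsf}$ must co-locate with the endpoint of the faulty edge, for otherwise the witness $(p,q)$ could be realised without $s$ reaching $t$; placing both features at $t$ itself makes the converse direction immediate. The upper bound is conceptually light once one commits to streaming the composition rather than materialising it.
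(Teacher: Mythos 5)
Your proof is correct, and the overall route is the one the paper takes: $\NL$ membership by specialising Theorem \ref{thm1_copred} to $L=1$ and running guess-and-verify reachability on an implicitly represented $\CCa(\Scal^{\nsf};\Scal^{\nsf})$ (plus $\NL=\coNL$), and $\NL$-hardness by a logspace reduction from $\PATH$ to negation of predictability. The only substantive difference is the hardness gadget. The paper reuses the diagnosability gadget of B\'erard et al.: every edge of $G$ is realised by an \emph{observable} event $a$, every vertex also gets an $a$-transition into a sink $v_f$ carrying an $a$-self-loop, and from $t$ there are both a faulty $f$ and a normal $u$ into $v_f$; this keeps the constructed FSA deadlock-free and prompt, so the hardness holds even within the class satisfying Assumption \ref{assum1_Det_PN}. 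Your gadget (one unobservable normal event on the edges, a self-loop at $t$, and a single faulty edge $t\xrightarrow{f}t'$ into a fresh deadlock state) is simpler and still correct, because Theorem \ref{thm1_copred} and the definitions need no assumptions — but it yields the slightly weaker statement that hardness holds for general FSAs rather than for deadlock-free, divergence-free ones. Two small remarks: your claim that the self-loop at $t$ is ``the only cycle of $\Scal^{\nsf}$'' is false when $G$ itself has cycles, but it is not load-bearing — the witness forces $p=t$ solely because $t$ is the unique source of a faulty transition, so the converse direction stands; and in the membership test, ``a cycle reachable from $q$'' should be read as allowing the cycle to pass through $q$ itself (a zero-step walk in your guessing procedure), as the paper makes explicit with ``$x_3$ is equal to $\bar x_1$ or reachable from $\bar x_1$''.
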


\begin{proof}
	We first show the $\NL$ membership. Guess states $x_1,\bar x_1,x_2,x_3$ of $X$.
	Check in $\CCa(\Scal^{\nsf};\Scal^{\nsf})$: (\romannumeral1) $(x_1,\bar x_1)$ is reachable;
	in $\Scal$:
	(\romannumeral2) $(x_1,f,x_2)\in\dt$ for some faulty event $t\in T_f$;
	and in $\Scal^{\nsf}$:
	(\romannumeral3) $x_3$ is equal to $\bar x_1$ or $x_3$ is reachable from $\bar x_1$,
	(\romannumeral4) $x_3$ belongs to a transition cycle, all by nondeterministic search.

	We can use the logspace reduction from the $\NL$-complete $\PATH$ problem
	to negation of diagnosability constructed in \cite{Berard2018DiagnosabilityPetriNet}
	to prove the $\NL$-hardness of verifying (negation of) predictability. 
	Given $G=(V,E)$ and $s,t\in V$, we define the FSA $\Scal_G=(V\cup\{v_f\},
	\{a,f,u\},\{s\},\dt_G,\{a\},\ell)$ as follows: (1) $v_f$ is a fresh state not in $V$,
	(2) $\dt_G=\{(v,a,v'),(v,a,v_f)|(v,v')\in E\}\cup\{(t,f,v_f),(t,u,v_f),(v_f,a,v_f)\}$, 
	(3) only $f$ is faulty, $\ell(f)=\ell(u)=\ep$, $\ell(a)=a$.
	We obtain an FSA $\Scal_G$ that satisfies Assumption \ref{assum1_Det_PN}.
	Then one sees that $\Scal_G$ is not predictable if and only if
	there is a directed path from $s$ to $t$ or $s=t$.
\end{proof}

In order to give a lower bound to co-predictability, we adopt the 
$\PSPACE$-complete DFA INTERSECTION problem
shown in \cite[Lemma 3.2.3]{Kozen1977FiniteAutomatonIntersection}.

\begin{proposition}[\cite{Kozen1977FiniteAutomatonIntersection}]\label{prop2_copred}
	Problem \ref{prob_DFAIntersec} is $\PSPACE$-complete if the input DFAs are all complete.
\end{proposition}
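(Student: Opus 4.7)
The plan is to prove Proposition \ref{prop2_copred} in two directions, using the standard argument of Kozen.

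For the $\PSPACE$ upper bound I would argue nondeterministically. Given complete DFAs $\A_1,\dots,\A_n$ over alphabet $\Sig$, a word belongs to the intersection of the recognized languages iff there is a common accepting computation. Maintain a vector $(q_1,\dots,q_n)$ of current states of the $n$ DFAs, starting from their initial states; this vector uses $\sum_{i=1}^n \lceil \log|\A_i|\rceil$ bits, which is polynomial in the input size. Repeatedly guess a letter $\s\in\Sig$ and update each $q_i$ to its unique $\s$-successor (completeness ensures this is always defined). Accept whenever all $q_i$ are simultaneously accepting. If the intersection is nonempty a shortest witness has length at most $\prod_i|\A_i|$, so a polynomial-space counter bounds the search and guarantees termination on negative instances. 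This places the problem in $\NPSPACE$, and hence in $\PSPACE$ by Savitch's theorem \cite{Savitch1970PspaceNPspace}.

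For $\PSPACE$-hardness I would reduce from the acceptance problem of a fixed deterministic Turing machine $M$ running in space $p(|w|)$ on input $w$. Encode a computation of $M$ as a string $c_0\# c_1\#\cdots\# c_m$ over $\Sig=\Gamma\cup(Q\times\Gamma)\cup\{\#\}$, where each block $c_j$ is a configuration of length $p(|w|)$. I would build, in logarithmic space, a polynomial-size family of complete DFAs:
\begin{itemize}
\item $\A_{\mathrm{init}}$ accepts strings whose first block encodes the initial configuration on $w$;
\item $\A_{\mathrm{accept}}$ accepts strings whose last block contains an accepting state symbol;
\item $\A_{\mathrm{form}}$ checks that the string has the shape $(\Gamma\cup(Q\times\Gamma))^{p(|w|)}(\#(\Gamma\cup(Q\times\Gamma))^{p(|w|)})^*$;
\item for each position $i\in[1,p(|w|)]$, an automaton $\A_i$ that cycles through the configurations using a counter modulo $p(|w|)+1$ and, at the $i$-th cell of every consecutive pair $(c_j,c_{j+1})$, verifies that the value at position $i$ of $c_{j+1}$ is consistent with the three-symbol window at positions $i-1,i,i+1$ of $c_j$ under $M$'s local transition rule.
\end{itemize}
Each $\A_i$ needs only $O(p(|w|))$ states (counter plus a constant-size buffer remembering the relevant window). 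Any non-complete automaton obtained above can be completed by adding a sink state without changing its language. A word lies in the intersection iff it encodes an accepting computation of $M$ on $w$, so the intersection is nonempty iff $M$ accepts $w$, giving the hardness.

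The main obstacle is engineering the transition-consistency automata $\A_i$ so that each is simultaneously (a) polynomial in size, (b) complete, and (c) correct on the seam between consecutive configurations, including at the boundary positions $i=1$ and $i=p(|w|)$. The trick is to use the modular counter to recognize exactly which symbol is being inspected, and to carry just the three-cell window across the separator $\#$; all other transitions merely advance the counter or shuttle to the sink, keeping the state blow-up linear. Once this local bookkeeping is correct, the reduction is straightforwardly logspace, and together with the $\PSPACE$ membership argument this yields $\PSPACE$-completeness for complete-DFA intersection nonemptiness.
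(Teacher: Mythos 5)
Your proof is correct and is essentially the classical argument of the cited source: the paper does not reprove this proposition but simply invokes Kozen's Lemma 3.2.3, and your reconstruction (on-the-fly nondeterministic simulation of the product automaton plus Savitch's theorem for membership, and the generic reduction from a polynomial-space Turing machine via initial-, accepting-, format-, and cell-consistency automata, completed with a sink state, for hardness) is exactly that standard argument. No gaps worth noting.
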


We need to change Proposition \ref{prop2_copred} slightly as follows.

\begin{proposition}\label{prop3_copred}
	Problem \ref{prob_DFAIntersec} is $\PSPACE$-hard if the input 
	DFAs $\A_1,\dots,\A_n$ satisfy that $\A_1$ has exactly one accepting state
	and the accepting state is deadlock, and all other $\A_i$'s are deadlock-free
	and have all states accepting.
\end{proposition}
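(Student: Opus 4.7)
I would reduce from the restricted version of Problem~\ref{prob_DFAIntersec} in which all input DFAs are complete, which is $\PSPACE$-complete by Proposition~\ref{prop2_copred}, in the same spirit as the argument for Proposition~\ref{prop3_codet}. Given complete DFAs $\A_1,\dots,\A_n$ over a common alphabet $\Sig$, I would introduce a single fresh terminator letter $\lambda\notin\Sig$, build modified DFAs $\A_1',\dots,\A_n'$ over $\Sig\cup\{\lambda\}$ satisfying the two structural clauses of the proposition, and show that some $w\in\Sig^*$ is accepted by every $\A_i$ iff the padded word $w\lambda$ is accepted by every $\A_i'$.

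\textbf{Construction.} For $\A_1'$, I would adjoin a fresh sink state $\diamond_1$, demote every original accepting state of $\A_1$ to non-accepting, declare $\diamond_1$ the unique accepting state, add a transition $q\xrightarrow{\lambda}\diamond_1$ from each originally-accepting state $q$, and give $\diamond_1$ no outgoing transitions. By construction $\A_1'$ has exactly one accepting state and that state is deadlock, as the first clause demands. For each $i\in[2,n]$, I would adjoin a fresh state $\diamond_i$, add transitions $q\xrightarrow{\lambda}\diamond_i$ from each originally-accepting state $q$ of $\A_i$, add a self-loop $\diamond_i\xrightarrow{\lambda}\diamond_i$, and declare every state of $\A_i'$ accepting. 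Since each $\A_i$ is complete on $\Sig$, every original state has outgoing $\sigma$-transitions for all $\sigma\in\Sig$; combined with the $\lambda$-self-loop on $\diamond_i$, this makes $\A_i'$ deadlock-free with all states accepting, as the second clause demands. Each $\A_i'$ remains deterministic since $\lambda$ is fresh, and the construction is linear in the input size.

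\textbf{Equivalence and main obstacle.} The ``$\Rightarrow$'' direction is routine: if $w\in\Sig^*$ is accepted by every $\A_i$, then reading $w$ in each $\A_i'$ lands in an originally-accepting state, from which $\lambda$ is defined and leads to $\diamond_i$, yielding acceptance of $w\lambda$. The converse is where care is required: any word accepted by $\A_1'$ must end at $\diamond_1$, and because $\diamond_1$ is a deadlock sink reachable only through a single $\lambda$-edge out of an originally-accepting state of $\A_1$, the accepted word must have the form $u=w\lambda$ with $w\in\Sig^*$ accepted by $\A_1$; then the same $u$ being accepted by each $\A_i'$ with $i\geq 2$ forces $w$ to drive $\A_i$ into an originally-accepting state, so $w$ is accepted by $\A_i$. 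The main subtlety I anticipate is reconciling the two opposite-looking structural demands on $\A_1'$ versus $\A_2',\dots,\A_n'$: the first forces the sole accepting state to be deadlock (pinning $\lambda$ to the very end), while the latter forces every state to be accepting and none to be deadlock, so each $\diamond_i$ ($i\geq 2$) must carry a self-loop that could in principle produce extraneous accepting runs. The asymmetric treatment of $\diamond_1$ versus $\diamond_i$ ($i\geq 2$)---in particular the deliberate absence of any self-loop on $\diamond_1$---is precisely what prevents the $\lambda$-loops on the other $\diamond_i$ from creating words that are accepted by all $\A_i'$ but do not originate from a common $\Sig^*$ word accepted by the original DFAs, and verifying this carefully is the single non-routine step in the argument.
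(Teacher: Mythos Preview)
Your proposal is correct and follows essentially the same approach as the paper: you reduce from the complete-DFA version of Problem~\ref{prob_DFAIntersec} (Proposition~\ref{prop2_copred}), introduce a fresh terminator letter $\lambda$, build $\A_1'$ with a single deadlock accepting sink $\diamond_1$ and $\A_i'$ ($i\ge 2$) with a $\lambda$-self-looped sink $\diamond_i$ and all states accepting, and argue that $w$ is accepted by every $\A_i$ iff $w\lambda$ is accepted by every $\A_i'$. Your treatment of the converse direction is in fact more explicit than the paper's, which simply asserts the equivalence.
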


\begin{proof}
	We are given complete DFAs $\A_1,\dots,A_n$ over the same alphabet $\Sig$.
	Construct DFA $\A'_1$ from $\A_1$ by adding
	transitions $q\xrightarrow[]{\lambda}\diamond_1$
	at each accepting state $q$, 
	changing all accepting states to be
	non-accepting, and changing $\diamond_1$ to be accepting.
	For each $2\le i\le n$, construct deadlock-free DFA $\A'_i$ from $\A_i$ by adding
	transitions $q\xrightarrow[]{\lambda}\diamond_i$
	at each accepting state $q$, also adding self-loop on $\diamond_i$ labeled by $\lambda$,
	changing all non-accepting states, including $\diamond_i$,
	to be accepting. Then $\A_1'$ has exactly one accepting state $\diamond_1$,
	$\diamond_1$ is the unique deadlock state of $\A_1'$, and $\A_2',\dots,\A_n'$
	have all states accepting and deadlock.
	One sees that for each word $w\in\Sig^*$, $w$ is accepted 
	by all $\A_i$'s if and only if $w\lambda$ is accepted by all $\A_1',\dots,\A_n'$.
	By Proposition \ref{prop2_copred}, this proposition holds.
\end{proof}

Next we give a lower bound for co-predictability, where the reduction is inspired from 
the $\PSPACE$-hardness proof of verifying co-diagnosability of deterministic FSAs
\cite{Cassez2012ComplexityCodiagnosability}.

\begin{theorem}\label{thm3_copred}
	Problem \ref{prob_copred} is $\PSPACE$-hard for deterministic deadlock-free FSAs.
\end{theorem}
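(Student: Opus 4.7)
The plan is to reduce the restricted DFA intersection problem of Proposition~\ref{prop3_copred} in polynomial time to the negation of $(\Osf,\{f\})$-co-predictability, adapting the gadget from the proof of Theorem~\ref{thm3_codet} so that the resulting FSA is deterministic and deadlock-free, and its unique faulty event is unpredictable exactly when a common word exists.

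Given DFAs $\A_0,\A_1,\dots,\A_L$ over a common alphabet $\Gamma$ such that $\A_0$ has a unique accepting state $q^*$ which is deadlock and each $\A_i$ with $i\in[1,L]$ is deadlock-free with every state accepting, I will construct $\Scal$ as follows. Inside each $\A_i$, rename every letter $\sigma\in\Gamma$ to $\sigma_i$ and set $\ell(\sigma_i)=\sigma$. Add a fresh initial state $\diamond_0$ together with transitions $\diamond_0\xrightarrow{a_i}q_i^0$ to each initial state $q_i^0$ of $\A_i$, and set $\ell(a_i)=a$; un-initialize the original initial states. Finally add a state $\diamond_f$, a single faulty transition $q^*\xrightarrow{f}\diamond_f$ with $T_f=\{f\}$ and $\ell(f)=\epsilon$, and a self-loop $\diamond_f\xrightarrow{b}\diamond_f$ with $\ell(b)=b$. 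The local observer $\Osf_i$, $i\in[1,L]$, observes exactly the events of $\A_0$ and $\A_i$, all the $a_j$'s, and $b$. The FSA $\Scal$ is then deterministic (all event names are distinct) and deadlock-free (the only formerly deadlock state $q^*$ now fires $f$, and $\diamond_f$ has the $b$-self-loop), and the construction is clearly polynomial.

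I then establish that there exists $w\in\Gamma^*$ accepted by every $\A_i$ if and only if $\Scal$ is not $(\Osf,\{f\})$-co-predictable. The forward direction follows directly from Proposition~\ref{prop1_copred}: for a common word $w=w^1\cdots w^m$, the sequence $a_0 w_0^1\cdots w_0^m f$ lies in $L(\Scal)\cap T^*T_f$, and for every non-faulty prefix $s'=a_0 w_0^1\cdots w_0^j$ and every $i\in[1,L]$, the sequence $u=a_i w_i^1\cdots w_i^j$ is non-faulty with $O_i(u)=a w^1\cdots w^j=O_i(s')$, and extends inside the deadlock-free $\A_i$ by an arbitrarily long non-faulty tail $v$.

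The converse is where the main effort lies. Every $s\in L(\Scal)\cap T^*T_f$ has the form $a_0 w_0^1\cdots w_0^m f$ with $w=w^1\cdots w^m\in L(\A_0)$, because $f$ is fireable only at $q^*$ and $q^*$ is reachable only via the $a_0$-branch. Taking $k=1$ in Proposition~\ref{prop1_copred} and picking the prefix $s'=a_0 w_0^1\cdots w_0^m$, at each location $i$ we obtain a non-faulty $uv\in L(\Scal)$ with $O_i(u)=aw$ and $|v|\ge 1$. Since the $\A_l$-branches of $\Scal$ are pairwise disjoint, $u=a_l u'$ with $u'$ staying inside $\A_l$, and a case analysis on $l$ closes the argument: if $l=0$, determinism of $\A_0$ forces $u'=w_0^1\cdots w_0^m$ ending at $q^*$, whose only continuation is the faulty $f$, contradicting $|v|\ge 1$; if $l\notin\{0,i\}$, then $\Osf_i$ erases everything after $a_l$, forcing $w=\epsilon$, which is trivially accepted by every $\A_j$; otherwise $l=i$ and the word $w$ must be followable in $\A_i$, hence $w\in L(\A_i)$ because every state of $\A_i$ is accepting. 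Aggregating this across all $i\in[1,L]$ yields the required common word, and this case analysis is the step most easily mishandled and therefore the main obstacle in the proof.
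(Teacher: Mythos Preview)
Your proposal is correct and follows essentially the same approach as the paper's proof: both reduce from the restricted DFA-intersection instance of Proposition~\ref{prop3_copred}, glue the $\A_i$ together via fresh initial transitions $a_i$ with common label $a$, attach a single faulty event at the unique (deadlock) accepting state of $\A_0$, and argue the equivalence via the same case analysis on which branch $\A_l$ the witnessing non-faulty run $u$ enters. The only cosmetic differences are that you use a fresh label $b$ on the post-fault self-loop (the paper reuses $a$) and you invoke Proposition~\ref{prop1_copred} directly with $k=1$, whereas the paper routes the converse through Theorem~\ref{thm1_copred}; neither change affects the argument.
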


\begin{proof}
	We prove this result by Proposition \ref{prop3_copred}.

	We are given DFAs $\A_0,\dots,\A_n$ over the same alphabet $\Sig$
	such that $\A_0$ has exactly one accepting state and the accepting state is deadlock
	and all other $\A_i$'s have 
	all states accepting and deadlock-free.
	Next we construct an FSA $\Scal$ from $\A_0,\dots,\A_n$ in polynomial time
	as shown in Fig. \ref{fig3:copred}.
	In each $\A_i$, $i\in[0,L]$, change each letter (i.e., event) $\s\in\Sig$ to $\s_i$,
	and set $\Sig_i=\{\s_i|\s\in\Sig\}$ and $\ell(\s_i)=\s$, where $\ell$ is the labeling function. 
	Add initial state $\diamond_0$ and transition $\diamond_0\xrightarrow[]{a_i}q$ to the initial state $q$
	of each $\A_i$, $i\in[0,L]$, where $\diamond_0$ differs from any state of any $\A_i$,
	$a_i\notin\bigcup_{i\in[0,L]} \Sig_i$, and set $\ell(a_i)=a$. 
	Change each initial state of each $\A_i$ to be non-initial,
	$i\in[0,L]$. Add state $\diamond_1$ that is not any state of any $\A_i$,
	$i\in[0,L]$, and self-loop on it labeled by a fresh event $a$, and set $\ell(a)=a$.
	At the accepting state of $\A_0$, add transition to $\diamond_1$ labeled by event $F$,
	where $F$ is a new event and set to be faulty. We also set $\ell(F)=\ep$, i.e., $F$ is unobservable.
	We have obtained the deterministic deadlock-free FSA $\Scal$ with a unique initial state.
	In $\Scal$, all states except for the unique accepting state of $\A_0$ can be reachable 
	from some state $q$ through only non-faulty transitions such that $s$ belongs to a non-faulty
	transition cycle.
	The number of states of $\Scal$ equals the sum of numbers of states of all $\A_i$, $i\in[0,L]$,
	plus $2$ (corresponding to newly added states $\diamond_1,\diamond_2$).
	The event set of $\Scal$ is $\bigcup_{i\in[0,L]}\Sig_i\cup\{a,F\}\cup\{a_i|i\in[0,L]\}$.

	\begin{figure}[htbp]
		\tikzset{global scale/.style={
    scale=#1,
    every node/.append style={scale=#1}}}
		\begin{center}
			\begin{tikzpicture}[global scale = 1.0,
				>=stealth',shorten >=1pt,thick,auto,node distance=2.5 cm, scale = 1.0, transform shape,
	->,>=stealth,inner sep=2pt,
				every transition/.style={draw=red,fill=red,minimum width=1mm,minimum height=3.5mm},
				every place/.style={draw=blue,fill=blue!20,minimum size=7mm}]
				\tikzstyle{emptynode}=[inner sep=0,outer sep=0]
				\node[state, initial] (dia0) {$\diamond_0$};
				\node[state] (s00) [above right of = dia0] {};
				\node[state, accepting] (s0f) [right of = s00] {};
				\node[state, accepting] (sif) [below right of = dia0] {};
				\node[state, accepting] (sif') [right of = sif] {};
				\node[state] (dia1) [right of = s0f] {$\diamond_1$};

				\path[->]
				(dia0) edge node [above, sloped] {$a_0(a)$} (s00)
				(dia0) edge node [above, sloped] {$a_i(a)$} (sif)
				(s0f) edge node [above, sloped] {$F$} (dia1)
				(dia1) edge [loop right] node {$a$} (dia1)
				;

				\draw[dashed] (3.0,1.8) ellipse (2cm and 1cm);
				\node at (3.0,1.8) {$\A_0$};

				\draw[dashed] (3.0,-1.8) ellipse (2cm and 1cm);
				\node at (3.0,-1.2) {$\A_i,1\le i\le n$};

			\end{tikzpicture}
			\caption{Sketch of the reduction in the proof of Theorem \ref{thm3_copred}.}
			\label{fig3:copred}
		\end{center}
	\end{figure}

	Now we specify local automaton $\Scal_i$, $i\in[1,L]$, we only need to specify the labeling function $O_i$
	of each $\Scal_i$. For each $i\in[1,L]$, the observable event set is
	$\Sig_0\cup\Sig_i\cup\{a\}\cup\{a_i|i\in[0,L]\}$.
	That is, at each location $i\in[1,L]$, observer $\Osf_i$ can observe $\A_0$, $\A_i$, and all
	transitions outside $\bigcup_{j\in[0,L]}\A_j$ except for the faulty transitions. 

	In order to prove this theorem, we only need to prove that
	there is a word $w\in\Sig^*$ that is accepted by all $\A_i$, $i\in[0,L]$, if and only if,
	$\Scal$ is not $(\Osf,T_f)$-co-predictable.

	$\Rightarrow$: Assume $w=w^1\dots w^n\in\Sig^*$ that is accepted by all $\A_i$, $i\in[0,L]$,
	where $w^1,\dots,w^n\in\Sig$, $n\in\N$.
	Then $a_0w^1_0\dots w^n_0F\in L(\Scal)$ and $a_iw^1_i\dots w^n_i\in L(\Scal)$ for all $i\in[1,L]$.
	For each $i\in[1,L]$, $O_i(a_iw^1_i\dots w^n_i)=O_i(a_0w^1_0\dots w^n_0F)=aw$.
	Since $\Scal$ is deadlock-free and only $F$ is 
	faulty, we have $\Scal$ is not $(\Osf,T_f)$-co-predictable by Proposition \ref{prop1_copred}.

	$\Leftarrow$:
	Assume that $\Scal$ is not $(\Osf,T_f)$-co-predictable. Then by Theorem \ref{thm1_copred}
	and the structure of $\Scal$ (there is a unique faulty transition and the transition
	starts at the unique accepting state of $\A_0$),
	there exist $a_{\iota}w_{\iota}F\in L(\Scal)$ , $a_{j_i}w_{j_i}\in L(\Scal)$ for each
	$i\in[1,L]$ such that $O_i(a_{\iota}w_{\iota}F)=O_i(a_{j_i}w_{j_i})$, and $a_{j_i}w_{j_i}$
	does not lead $\Scal$ to the unique accepting state of $\A_0$. 
	One must have $\iota=0$ since $F$ can only follow words in $\Sig_0^*$, then
	$w_\iota\in\Sig_0^*$. Assume $w_{\iota}=\ep$. Because $\ell(w_{\iota})=\ep$ leads $\A_0$ to its
	accepting
	state (otherwise $w_{\iota}$ cannot be continued by $F$), $\ep$ is accepted by $\A_0$. $\ep$ is 
	always accepted by all other $\A_i$, $i\in[1,L]$. Next assume $w_{\iota}\ne\ep$. Then one has
	$\ell(w_{\iota})$ is accepted by $\A_0$, and then $j_i=i$ for all $i\in[1,L]$, otherwise
	there exists $k\in[1,L]$ such that $a_{j_k}w_{j_k}$ leads $\Scal$ to the unique accepting state of $\A_0$,
	reaching a contradiction. Note that
	$\ell(w_{j_i})$ is accepted by $\A_i$ for each $i\in[1,L]$ since all such $\A_i$'s have all states
	accepting, completing the proof.
\end{proof}

\section{Conclusion}\label{sec4:conc}

In this paper, we proposed a decentralized version of strong detectability of 
FSAs, and gave a $\PSPACE$ upper bound and a $\coNP$ lower bound for the notion.
In addition, we gave a unified concurrent-composition method to verify decentralized versions
of strong detectability,
diagnosability, and predictability of FSAs without any assumption or changing the FSAs under
consideration, which actually reveals essential relationships between these notions.

Moreover, the unified method could provide synthesis algorithms for
enforcing the three properties by choosing to disable several controllable transitions.
By Theorems \ref{thm1_codet}, \ref{thm1_codiag}, and \ref{thm1_copred}, lacks of the three
properties are due to existence of special transition sequences in the corresponding
concurrent compositions, the synthesis algorithms are to choose to disable several
controllable events of FSAs in order to remove all such special transition sequences.
These processes can be done in $\PTIME$ in 
the size of the corresponding concurrent compositions. 

This unified method could also be applied to characterize other variants of co-diagnosability
or co-predictability in the literature, or applied to characterize more general distributed 
versions of these fundamental properties under weak assumptions on the underlying networks.

\end{document}